\newtheorem{theorem}{Theorem}[section]
\newtheorem{remark}{Remark}[section]
\newtheorem{definition}{Definition}[section]
\newtheorem{lemma}[theorem]{Lemma}
\newtheorem{pro}[theorem]{Proposition}
\renewcommand{\div}{{\rm div }}
\newcommand{\bt}{\begin{theorem}}
\newcommand{\bl}{\begin{lemma}}
\newcommand{\el}{\end{lemma}}
\newcommand{\et}{\end{theorem}}
\newcommand{\bn}{\begin{eqnarray}}
\newcommand{\en}{\end{eqnarray}}
\newcommand{\bnn}{\begin{eqnarray*}}
\newcommand{\enn}{\end{eqnarray*}}
\newcommand{\ba}{\begin{aligned}}
\newcommand{\ea}{\end{aligned}}
\newcommand{\be}{\begin{equation}}
\newcommand{\ee}{\end{equation}}
\newcommand{\Bv}{{\boldsymbol{v}}}
\newcommand{\Bu}{{\boldsymbol{u}}}
\newcommand{\Be}{{\boldsymbol{e}}}
\newcommand{\BF}{{\boldsymbol{F}}}
\newcommand{\Bz}{{\boldsymbol{\zeta}}}
\newcommand{\BU}{\boldsymbol{U}}
\newcommand{\Bo}{{\boldsymbol{\omega}}}
\newcommand{\mcA}{\mathcal{A}}
\newcommand{\mcH}{\mathcal{H}}
\newcommand{\mcL}{\mathcal{L}}
\newcommand{\mcG}{\mathcal{G}}
\newcommand{\inte}{\int_{-1}^1}
\newcommand{\Z}{\mathbb{Z}}
\newcommand{\T}{\mathcal{T}}
\newcommand{\B}{\mathcal{B}}
\newcommand{\Q}{\mathcal{Q}}
\begin{document}

\title[Stability and Uniqueness of Poiseuille Flows]
{Uniform structural stability and uniqueness of Poiseuille flows in a two dimensional periodic strip}

\author{Kaijian Sha}
\address{Department of mathematics, East China University of Science and Technology, Shanghai, China}
\email{10132229@mail.ecust.edu.cn}

\author{Yun Wang}
\address{School of Mathematical Sciences, Center for dynamical systems and differential equations, Soochow University, Suzhou, China}
\email{ywang3@suda.edu.cn}

\author{Chunjing Xie}
\address{School of mathematical Sciences, Institute of Natural Sciences,
Ministry of Education Key Laboratory of Scientific and Engineering Computing,
and SHL-MAC, Shanghai Jiao Tong University, 800 Dongchuan Road, Shanghai, China}
\email{cjxie@sjtu.edu.cn}

\begin{abstract}
In this paper, we prove the uniform nonlinear structural stability of Poiseuille flows with arbitrarily large flux for the Navier-Stokes system in a two dimensional periodic strip when the period is not large. The key point is to establish the a priori estimate for the associated linearized problem via the careful analysis for the associated boundary layers. Furthermore, the well-posedness theory for the Navier-Stokes system is also proved even when the external force is large in $L^2$. Finally, if the vertical velocity is suitably small where the smallness is independent of the flux, then Poiseuille flow is the unique solution of the steady Navier-Stokes system in the periodic strip.
\end{abstract}

\keywords{Poiseuille flows, steady Navier-Stokes system, two dimensional, uniform structural stability, periodic.}
\subjclass[2010]{
35G61, 35J66, 35L72, 35M32, 76N10, 76J20}

\thanks{Updated on \today}

\maketitle

\section{Introduction and Main Results}
The famous Leray problem (\cite{Galdi}) is to study the well-posedness for the steady Navier-Stokes system
\begin{equation}\label{NS}
\left\{
\begin{aligned}
&\boldsymbol{u}\cdot \nabla \boldsymbol{u} +\nabla p=\Delta \boldsymbol{u}+\BF,\\
&\div~\boldsymbol{u}=0,
\end{aligned}
\right.
\end{equation}
 in an infinitely long channel with no slip boundary conditions such that the solutions tend to the shear flows at far fields, where $\Bu=(u_1,u_2)$ and $\BF=(F_1,F_2)$ are respective the velocity field and external force in the two dimensional case.  When the far field of the channel tends to a strip $\mathbb{R}\times [-1,1]$, then the associated shear flows for \eqref{NS} satisfy the boundary conditions and the constraint
\begin{equation}\label{BC}
\boldsymbol{u}=0\text{ at } y=\pm 1,~~ \ \ \ \int_{-1}^1 u_1(x, y) dy =\Phi.
\end{equation}
Here  the constraint comes from the divergence free condition of the velocity field and  $\Phi\in \mathbb{R}$ is called the flux of the flow.
When $\BF=0$,  the shear flows $\boldsymbol{U}=U(y)\Be_1$ for the problem \eqref{NS}-\eqref{BC} have explicit forms as follows
\begin{equation}\label{Poiseuille}
U(y)=\frac34\Phi(1-y^2),
\end{equation}
 The flow $\boldsymbol{U}$ is called the Poiseuille flow. Without loss of generality, the flux  $\Phi$ is assumed to be nonnegative.

The major breakthrough for the Leray problem was made by Amick \cite{A1,A2,AF} and Ladyzhenskaya and Solonnikov \cite{LS}. It was proved in \cite{A1, LS} that there is a unique solution for the Leray problem as long as the flux is small. The convergence rates of the solutions for the Navier-Stokes system with small flux in a channel was studied in \cite{AP,Horgan, HW, KP,Galdi,Morimoto, MF, NP1,NP2, NP3} and references therein. A significant open problem posed in \cite{Galdi} is to prove the existence of solutions for Leray problem when the flux is large. In fact,  it was proved in \cite{LS} that there exists a solution with arbitrary flux of the steady Navier-Stokes system in an infinitely long channel. Therefore, in order to solve the Leray problem, one needs only to show that the solutions obtained in \cite{LS} tend to Poiseuille flows at far fields.   To the best of our knowledge, there is no result on the far field behavior of steady solutions with large flux of Navier-Stokes system in a channel except for the axisymmetric solutions in a pipe studied in \cite{WX2}.   With the aid of the local compactness of the solutions and blowup techniques,  the key ingredient to get the far field behavior for the solutions obtained in \cite{LS} is to prove a Liouville type theorem for the Poiseuille flows in a strip. This is equivalent to the global uniqueness of Poiseuille flows in an infinitely long strip. However,  there are some essential difficulties to get even the local uniqueness of Poiseuille flows in a two dimensional infinitely long strip.
This is also quite different from the axisymmetric flows in a pipe where the local uniqueness and even the well-posedness with large external force have been established in \cite{WX1,WX2}. When the flow is symmetric in the strip, the existence of steady solutions for Navier-Stokes system with large flux was established in \cite{Rabier1}. The existence of general solutions in a strip was obtained in \cite{Rabier2} as long as the flux is not large.

In this paper, we consider the system \eqref{NS} in a strip $\Omega= \mathbb{T}_{2L\pi} \times[-1,1]$ which is periodic in $x$-direction with period $2 L \pi >0$. When supplemented with the no slip boundary condition and flux constraint \eqref{BC}, a natural question is whether the Poiseuille flows are unique in their uniform neighborhood  even when the flux is arbitrarily large. Let $\Bv=\Bu-\boldsymbol{U}$ be  the perturbation around the Poiseuille flow. It  satisfies the following system
\begin{equation}\label{model11}
\left\{\begin{aligned}
&-\Delta \Bv+\boldsymbol{U}\cdot \nabla \Bv+\Bv\cdot \nabla \boldsymbol{U}+\nabla P=-\Bv\cdot\nabla\Bv+\BF,\\
&\mathrm{div}~\Bv=0.
\end{aligned}\right.
\end{equation}
supplemented with the no-slip boundary conditions and flux constraint
\begin{equation}\label{model11'}
\Bv=0~~\text{on}~\partial\Omega,~~~~\ \ \ \ \ \int_{-1}^1v_1(x,y)dy=0.
\end{equation}
The crucial point for the analysis on the local uniqueness of the solutions for the problem \eqref{model11}-\eqref{model11'} is to study the associated linearized problem, i.e.,  the linear system
\begin{equation} \label{model12}
\left\{ \begin{aligned}
&- \Delta \Bv+\boldsymbol{U} \cdot \nabla \Bv + \Bv \cdot \nabla \boldsymbol{U}  + \nabla P = \BF, \ \ \ \mbox{in}\ \Omega,\\
& {\rm div}~\Bv = 0,
\end{aligned}\right.
\end{equation}
with the no-slip boundary conditions and flux constraint \eqref{model11'}.

The first main result of this paper can be stated as follow.
\begin{theorem}\label{thm1}
Assume that $\BF=\BF(x,y)\in L^2(\Omega)$. There exists a positive constant $L_0$ such that for any $L\leq L_0$, the linearized problem \eqref{model12} and \eqref{model11'} admits a unique periodic solution $\Bv(x,y)\in H^2(\Omega)$ satisfying
\begin{equation}
\|\Bv\|_{H^\frac53(\Omega)}\le C\|\BF\|_{L^2(\Omega)}
\label{estimate11}
\end{equation}
and
\begin{equation}\nonumber
\|\Bv\|_{H^2(\Omega)}\le C(1+\Phi^\frac14)\|\BF\|_{L^2(\Omega)},
\end{equation}
where $C$ is a uniform constant independent of the flux $\Phi$, $L$ and $\BF$.
\end{theorem}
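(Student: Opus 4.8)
The plan is to reduce Theorem~\ref{thm1} to a single quantitative a priori estimate for the linear problem \eqref{model12}--\eqref{model11'}; once that estimate is available, uniqueness is immediate by applying it to the difference of two solutions with $\BF=0$, and existence follows by a standard argument carried out mode by mode at the end. To obtain the estimate I would expand in the periodic variable, $\Bv(x,y)=\sum_{k\in\Z}\hat\Bv_k(y)\,e^{i\alpha_k x}$ with $\alpha_k=k/L$, and treat each Fourier mode separately. The mode $k=0$ decouples and is harmless: the divergence-free condition together with $v_2|_{y=\pm1}=0$ forces $\hat v_{2,0}\equiv0$, so both the convection term $\boldsymbol U\cdot\na\Bv$ and the stretching term $\Bv\cdot\na\boldsymbol U$ vanish and $\hat v_{1,0}$ solves a one-dimensional elliptic problem with no dependence on $\Phi$. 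The entire difficulty is concentrated in the modes $k\neq0$, for which the small-period hypothesis $L\le L_0$ guarantees that $|\alpha_k|\ge 1/L_0$ is large.

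For $k\neq0$ I would pass to the stream-function formulation $v_1=\pa_y\psi$, $v_2=-\pa_x\psi$ and eliminate the pressure by taking the curl, so that each mode $\hat\psi$ solves the steady Orr--Sommerfeld equation
\begin{equation}\nonumber
(\pa_y^2-\alpha^2)^2\hat\psi-i\alpha U(\pa_y^2-\alpha^2)\hat\psi+i\alpha U''\hat\psi=\hat g,\qquad \hat\psi=\pa_y\hat\psi=0\ \text{at}\ y=\pm1,
\end{equation}
where $\hat g$ is the corresponding mode of $\curl\BF$, and where the crucial structural facts are that $U=\tfrac34\Phi(1-y^2)\ge0$ while $U''=-\tfrac32\Phi$ is a negative constant. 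Writing $\phi=(\pa_y^2-\alpha^2)\hat\psi$ for the (negative) vorticity, I would test this equation against $\overline{\phi}$ and take the real part. The convection term is then purely imaginary and drops out, the $U''$ term drops as well because $\mathrm{Im}\!\int\hat\psi\,\overline\phi=0$, and what remains is the clean identity
\begin{equation}\nonumber
\|\pa_y\phi\|^2+\alpha^2\|\phi\|^2=\mathrm{Re}\,[\pa_y\phi\,\overline\phi]_{-1}^{1}-\mathrm{Re}\!\int \hat g\,\overline\phi .
\end{equation}
Since $\|\pa_y\phi\|^2+\alpha^2\|\phi\|^2$ controls the $\alpha$-weighted $H^2$ norm of $\hat\Bv_k$, and the forcing enters only through $\BF\in L^2$, this shows that everything reduces to the single boundary term $\mathrm{Re}\,[\pa_y\phi\,\overline\phi]_{-1}^1=\mathrm{Re}\,[\pa_y^3\hat\psi\,\overline{\pa_y^2\hat\psi}]_{-1}^1$: were it absent, the bound would already be uniform in $\Phi$.

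The main obstacle is precisely this boundary term, the wall vorticity flux, which is invisible to the interior identity above and into which all of the $\Phi$-dependence is funneled. At $y=\pm1$ the profile $U$ degenerates, so the convective damping switches off and a viscous sublayer forms; this is the boundary layer announced in the abstract. I would resolve it by rescaling $1\mp y=\delta z$ with $\delta=(\alpha\Phi)^{-1/3}$, which turns the equation near each wall into an Airy-type problem whose solution, matched to the outer flow, yields the sharp sizes of $\pa_y^2\hat\psi$ and $\pa_y^3\hat\psi$ at the wall. Here I would also use the imaginary part of the same test, in which the convection term resurfaces as the sign-definite damping $\alpha\int U|\phi|^2\ge0$, to supply the extra control needed to close the boundary term. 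Because the layer has width $(\alpha\Phi)^{-1/3}$, its contribution to the $H^{5/3}$ norm stays bounded uniformly in $\Phi$ --- this is the reason the exponent $5/3$ appears in \eqref{estimate11} --- whereas accounting for one further derivative forces the layer to be resolved and generates the factor $\Phi^{1/4}$ in the $H^2$ bound.

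Finally I would reassemble the full estimate by summing the per-mode bounds through Plancherel and adding the trivial $k=0$ contribution, which produces \eqref{estimate11} and the $H^2$ estimate with constants independent of $\Phi$, $L$ and $\BF$. For existence, each $\hat\psi_k$ is the solution of a regular fourth-order two-point boundary value problem --- the homogeneous problem admits only the trivial solution by the energy identity above (at fixed $\alpha$ the boundary term is harmless) --- and the uniform bounds guarantee that the reconstructed series converges in $H^2(\Omega)$, completing the proof.
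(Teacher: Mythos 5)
Your proposal correctly identifies the skeleton of the paper's argument (Fourier decomposition, trivial $0$-mode, Orr--Sommerfeld stream-function equation, an Airy-type layer of width $\sim|\Phi\hat{n}|^{-1/3}$ at the walls), but it has two genuine gaps at exactly the points where the real work lies. First, your central step --- controlling the wall vorticity flux $\mathrm{Re}\,[\pa_y\phi\,\overline{\phi}]_{-1}^{1}$ by ``rescaling $1\mp y=\delta z$ and matching'' --- is not an a priori estimate one can extract from the energy identity: to know that the solution has boundary-layer structure with sharp sizes of $\pa_y^2\hat\psi,\pa_y^3\hat\psi$ at the wall, you must actually \emph{construct} the solution in that form and prove the remainder is small. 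This is what the paper does in Proposition \ref{mediumstream}: it solves the problem with \emph{slip} conditions $\psi_{n,s}(\pm1)=\psi_{n,s}''(\pm1)=0$ (for which testing against the vorticity produces no boundary term, Lemma \ref{slip-est}), then adds explicit Airy-function layers, irrotational correctors $e^{\pm\hat n y}$ and remainder terms, and determines the coefficients $a_n,b_n$ from a linear system enforcing no-slip, the solvability of which rests on the delicate lower bound \eqref{5-3-82} for the layer's wall derivative (Lemma \ref{airy-est}). The uniform bound then follows for \emph{the} solution by uniqueness. Your sketch treats precisely this construction as a black box, and your related claim that the homogeneous problem has trivial kernel because ``at fixed $\alpha$ the boundary term is harmless'' is false as stated: the boundary term does not vanish in your identity; uniqueness is instead obtained by testing against $\overline{\psi_n}$ (where no boundary terms appear) and using the imaginary-part identity \eqref{2-2-6} together with the Hardy-type inequality of Lemma \ref{lemmaHLP}, which is where $L\le L_0$ genuinely enters.

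Second, your plan applies the boundary-layer analysis to \emph{all} modes $k\neq0$, but that description is only valid in the intermediate regime $1\le|n|\lesssim \epsilon_1 L\sqrt{\Phi}$: the construction requires $|\hat n|/\beta=(2/3)^{1/3}(\Phi^{-1/2}|\hat n|)^{2/3}$ to be small, and for $|\hat n|\gtrsim\sqrt{\Phi}$ the ``layer'' of width $|\Phi\hat n|^{-1/3}$ is no longer thin relative to the elliptic scale $1/|\hat n|$, so the Airy matching degenerates. The paper handles this high-frequency regime separately by direct energy estimates (Propositions \ref{highstream}--\ref{highv}). This omission also undermines your heuristic for the exponents: in the boundary-layer regime the paper obtains a \emph{uniform} $H^2$ bound (Proposition \ref{mediumv}), so neither the $5/3$ nor the $\Phi^{1/4}$ comes from the layer. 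Both arise in the high-frequency regime: the $\Phi^{1/4}$ comes from the convection term in the Stokes regularity estimate \eqref{est-stokes} via \eqref{5-4-9}, and $5/3$ is exactly the interpolation exponent at which $\|\Bv_n\|_{H^1}\lesssim\Phi^{-1/2}\|\BF_n\|_{L^2}$ and $\|\Bv_n\|_{H^2}\lesssim\Phi^{1/4}\|\BF_n\|_{L^2}$ balance, $\Phi^{-\frac12\cdot\frac13}\Phi^{\frac14\cdot\frac23}=1$. Without the high-frequency analysis, the stated estimates of Theorem \ref{thm1} cannot be assembled.
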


There are a few remarks in order.
\begin{remark}
In fact, the bound $L_0$ is not very small (cf. \eqref{2-2-7-0} and \eqref{5-3-50} where the upper bound of $L$ is needed) and independent of the magnitude of the flux.
\end{remark}

\begin{remark}
The estimate \eqref{estimate11} is independent of the flux although the coefficients of the system \eqref{model12} depends on the flux. This is the key point to get the uniform nonlinear structural stability of the Poiseuille flows.
\end{remark}

\begin{remark}
When using the normal mode analysis to study the hydrodynamic stability of Poiseuille flows (\cite{L1,Orszag,DR}), the key issue to study the spectrum problem
\begin{equation} \label{spectralpb}
\left\{ \begin{aligned}
&s\Bv- \Delta \Bv+\boldsymbol{U} \cdot \nabla \Bv + \Bv \cdot \nabla \boldsymbol{U}  + \nabla P = \BF, \ \ \ \mbox{in}\ \Omega,\\
& {\rm div}~\Bv = 0,
\end{aligned}\right.
\end{equation}
supplemented with the boundary conditions \eqref{model11'}. Obviously, the system \eqref{model12} corresponds to the system \eqref{spectralpb} with $s=0$.
 Recently, it was proved in \cite{GGN} that a general class of symmetric shear flows  of the two dimensional incompressible Navier-Stokes equations in a periodic strip are spectrally unstable when the Reynolds number is sufficiently large. However, the detailed spectra of the linear problem is still not clear.
Theorem \ref{thm1} asserts  that $0$ cannot be the spectrum of the linear problem \eqref{spectralpb} and \eqref{model11'} for the flows with any Reynolds number.
This provides more details for the spectral set of the associated linearized operator. The stability or enhanced dissipations for plane shear flows in a strip under Navier slip boundary conditions  or in the whole plane was studied in \cite{Ding1, Ding2, Conti} .
\end{remark}


Based on Theorem \ref{thm1}, we prove the well-posedness theory of the nonlinear problem \eqref{model11}-\eqref{model11'}.

\begin{theorem}\label{largeforce}
Assume that $\BF=\BF(x,y)\in L^2(\Omega)$.
\begin{enumerate}
\item[(a)]
There exist constants $\varepsilon$ and $L_0$, such that for all  $L\leq L_0$, if
\begin{equation}\nonumber
\|\BF\|_{L^2(\Omega)}\le \varepsilon,
\end{equation}
the steady Navier-Stokes system \eqref{NS} supplemented with the no slip boundary condition and flux constraint \eqref{BC} admits a unique periodic solution $\Bu(x,y)$ satisfying
\begin{equation}\label{est1}
\left\|\Bu-\BU\right\|_{H^\frac53(\Omega)}\le C\|\BF\|_{L^2(\Omega)},
\end{equation}
and
\begin{equation}\label{est2}
\|\Bu-\BU\|_{H^2(\Omega)}\le C (1 + \Phi^\frac14 ) \|\BF\|_{L^2(\Omega)},
\end{equation}
where $C$ is a uniform constant independent of the flux $\Phi$, $L$, and $\BF$.
\item[(b)] There exist constants $\Phi_0$ and $L_0$, such that for all $\Phi\ge \Phi_0$ and $L\leq L_0$, if
\begin{equation}\nonumber
\|\BF\|_{L^2(\Omega)}\le \Phi^\frac{1}{32},
\end{equation}
the steady Navier-Stokes system \eqref{NS} supplemented with the no slip boundary condition and flux constraint \eqref{BC} admits a unique periodic solution $\Bu(x,y)$ satisfying \eqref{est1}-\eqref{est2} and
\[
\|u_2\|_{L^2(\Omega)} \leq \Phi^{-\frac{7}{12}},
\]
 where $C$ is a uniform constant independent of the flux $\Phi$, $L$, and $\BF$.
\end{enumerate}
\end{theorem}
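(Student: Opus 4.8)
The plan is to treat both parts as fixed-point problems for the linear solution operator furnished by Theorem \ref{thm1}. Write $\mcL$ for the map sending $\BG\in L^2(\Omega)$ to the unique solution $\Bv\in H^2(\Omega)$ of \eqref{model12} and \eqref{model11'} with right-hand side $\BG$; Theorem \ref{thm1} gives $\|\mcL\BG\|_{H^{5/3}(\Omega)}\le C\|\BG\|_{L^2(\Omega)}$ and $\|\mcL\BG\|_{H^2(\Omega)}\le C(1+\Phi^{1/4})\|\BG\|_{L^2(\Omega)}$ with $C$ independent of $\Phi$ and $L$. A solution of \eqref{model11} and \eqref{model11'} is exactly a fixed point of $\mathcal{T}(\Bv):=\mcL(\BF-\Bv\cdot\nabla\Bv)$, which automatically produces a divergence-free field meeting the no-slip and flux conditions. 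The workhorse is the two dimensional bilinear estimate
\begin{equation}\nonumber
\|\Bv\cdot\nabla\Bw\|_{L^2(\Omega)}\le\|\Bv\|_{L^\infty(\Omega)}\|\nabla\Bw\|_{L^2(\Omega)}\le C\|\Bv\|_{H^{5/3}(\Omega)}\|\Bw\|_{H^{5/3}(\Omega)},
\end{equation}
which uses the embedding $H^{5/3}(\Omega)\hookrightarrow L^\infty(\Omega)$, valid because $5/3>1$ in two dimensions.

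For part (a) I would apply the Banach fixed-point theorem on the closed ball $\{\Bv\in H^{5/3}(\Omega):\|\Bv\|_{H^{5/3}(\Omega)}\le R\}$ of periodic, divergence-free fields meeting \eqref{model11'}. The bilinear estimate gives $\|\mathcal{T}(\Bv)\|_{H^{5/3}}\le C(R^2+\|\BF\|_{L^2})$ and, by bilinearity of $\Bv\cdot\nabla\Bv$, the Lipschitz bound $\|\mathcal{T}(\Bv_1)-\mathcal{T}(\Bv_2)\|_{H^{5/3}}\le C(\|\Bv_1\|_{H^{5/3}}+\|\Bv_2\|_{H^{5/3}})\|\Bv_1-\Bv_2\|_{H^{5/3}}$. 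Choosing $R$ small so that $2CR\le\frac12$ and then $\varepsilon$ so small that $C\varepsilon\le\frac{R}{2}$, the map $\mathcal{T}$ contracts the ball into itself and has a unique fixed point. Absorbing the quadratic term yields $\|\Bv\|_{H^{5/3}}\le 2C\|\BF\|_{L^2}$, which is \eqref{est1}; inserting $\BF-\Bv\cdot\nabla\Bv$ into the $H^2$ bound of $\mcL$ and using \eqref{est1} gives $\|\Bv\|_{H^2}\le C(1+\Phi^{1/4})(\|\Bv\|_{H^{5/3}}^2+\|\BF\|_{L^2})\le C(1+\Phi^{1/4})\|\BF\|_{L^2}$, which is \eqref{est2}. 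The same difference estimate gives uniqueness.

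For part (b) the force has size $\sim\Phi^{1/32}\to\infty$, so the contraction radius $R$ is large and the naive bound $\|\Bv\cdot\nabla\Bv\|_{L^2}\lesssim\|\Bv\|_{H^{5/3}}^2\sim\Phi^{1/16}$ overwhelms the force. The remedy is to make the nonlinearity slave to the vertical velocity: since $\div\,\Bv=0$ forces $\partial_x v_1=-\partial_y v_2$, one has $|\partial_x\Bv|=|\nabla v_2|$ pointwise, whence
\begin{equation}\nonumber
\|\Bv\cdot\nabla\Bv\|_{L^2(\Omega)}\le\|v_1\|_{L^\infty(\Omega)}\|\nabla v_2\|_{L^2(\Omega)}+\|v_2\|_{L^\infty(\Omega)}\|\Bv\|_{H^1(\Omega)}.
\end{equation}
Both terms carry a factor that is small once $v_2$ is small: Agmon's inequality gives $\|v_2\|_{L^\infty}+\|\nabla v_2\|_{L^2}\le C\|v_2\|_{L^2}^{1/2}\|v_2\|_{H^2}^{1/2}$, and along the iteration the $H^2$ bound of Theorem \ref{thm1} controls $\|v_2\|_{H^2}\le\|\Bv\|_{H^2}\lesssim\Phi^{1/4}\cdot\Phi^{1/32}=\Phi^{9/32}$. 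Hence, if one can maintain $\|v_2\|_{L^2}\le\Phi^{-7/12}$, the nonlinear term is of order $\Phi^{1/32}\cdot\Phi^{-7/24+9/64}=\Phi^{-23/192}\ll\Phi^{1/32}$, which closes the flux-independent $H^{5/3}$ estimate against the large force and simultaneously reproduces the asserted bound $\|u_2\|_{L^2}=\|v_2\|_{L^2}\le\Phi^{-7/12}$. I would therefore run the iteration $\mathcal{T}$ inside the flux-adapted class $\{\|\Bv\|_{H^{5/3}}\le C\Phi^{1/32},\ \|v_2\|_{L^2}\le\Phi^{-7/12}\}$ and close it by a continuity argument in $\Phi\ge\Phi_0$.

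The step I expect to be the main obstacle is propagating the refined bound $\|v_2\|_{L^2}\le\Phi^{-7/12}$. A direct energy or vorticity estimate does not close: writing the vorticity form of \eqref{model11},
\begin{equation}\nonumber
-\Delta\omega+U\partial_x\omega+\frac{3}{2}\Phi\,v_2+\Bv\cdot\nabla\omega=\curl\BF,\qquad \omega=\partial_x v_2-\partial_y v_1,
\end{equation}
and testing against $v_2$ produces, because $\|U\|_{L^\infty}\sim\Phi$, a positive term comparable to $\Phi\|\nabla v_2\|_{L^2}^2$ on the right-hand side, of the same order as the left-hand side $\frac{3}{2}\Phi\|v_2\|_{L^2}^2$, so no flux decay is gained. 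Extracting genuine decay for $v_2$ instead requires the fine boundary-layer analysis underlying Theorem \ref{thm1}: one must revisit that analysis to obtain a flux-weighted, component-wise refinement of the linear estimate that bounds $\|v_2\|_{L^2}$ by a negative power of $\Phi$ times $\|\BF-\Bv\cdot\nabla\Bv\|_{L^2}$, and combine it with the smallness of the period (the horizontal average of $v_2$ vanishes, so for $L\le L_0$ the Poincaré inequality $\|v_2\|_{L^2}\le CL\|\partial_x v_2\|_{L^2}$ is available). Granting this refined vertical estimate, the bootstrap closes for all $\Phi\ge\Phi_0$, and the difference estimate again delivers uniqueness within the class.
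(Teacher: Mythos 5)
Your part (a) is essentially the paper's own argument: the paper also solves $\Bv=\T(\BF)+\B(\Bv,\Bv)$ in $Y=H^{5/3}(\Omega)$ via the abstract bilinear fixed-point lemma (Lemma \ref{lemmaA6}), using the flux-independent $H^{5/3}$ bound of Theorem \ref{thm1} and bootstrapping the $H^2$ estimate afterwards; your $L^\infty\times L^2$ pairing in place of the paper's $L^{12}\times L^{12/5}$ H\"older pairing is immaterial.

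For part (b) your overall strategy --- iterate in a class where $\|v_2\|_{L^2(\Omega)}\le\Phi^{-7/12}$, use the divergence-free structure to make the nonlinearity slave to $v_2$, and close by power counting in $\Phi$ --- matches the paper's, and your pointwise identity $|\partial_x\Bv|=|\nabla v_2|$ is a clean physical-space substitute for the paper's Hausdorff-Young convolution estimates on Fourier modes. However, the proposal has a genuine gap exactly at the point you flag: the flux-decaying linear estimate for $v_2$ is never proved, only postulated (``granting this refined vertical estimate\dots''). This estimate cannot be extracted from the statement of Theorem \ref{thm1}, and your proposal supplies no mechanism for producing it. In the paper it is not new analysis performed at the nonlinear stage; it is already contained in the mode-by-mode linear theory: Proposition \ref{mediumv} gives $\|\Bv_n\|_{L^2(\Omega)}\le C|\Phi\hat{n}|^{-2/3}\|\BF_n\|_{L^2(\Omega)}$ for $1\le|n|\le\epsilon_1 L\sqrt{\Phi}$, Proposition \ref{highv} gives $\|\Bv_n\|_{L^2(\Omega)}\le C\Phi^{-1}\|\BF_n\|_{L^2(\Omega)}$ for high frequencies, and the zero mode has no vertical component at all (by Proposition \ref{0mode}, $\Bv_0=-\psi_0'\Be_1$, so $v_{2,0}\equiv0$); summing over $n\neq0$ yields $\|v_2\|_{L^2(\Omega)}\le\|\Q\Bv\|_{L^2(\Omega)}\le C\Phi^{-2/3}\|\BF-\Bv\cdot\nabla\Bv\|_{L^2(\Omega)}$, which is the missing ingredient. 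Note also that ``a negative power of $\Phi$'' is not sufficient as stated: to propagate $\|v_2\|_{L^2}\le\Phi^{-7/12}$ against a force of size $\Phi^{1/32}$ one needs decay at least $\Phi^{-7/12-1/32}=\Phi^{-59/96}$, and the exponent $-2/3=-64/96$ produced by the boundary-layer analysis clears this bar with little room to spare, so the quantitative rate matters, not merely its sign. Finally, uniqueness in part (b) is also left as a one-line remark, whereas the paper (Step 4 of Proposition \ref{largeF}) requires a separate mode-by-mode difference estimate in the norm $\|\tilde{\Bv}_0\|_{H^2(\Omega)}+\|\Q\tilde{\Bv}\|_{H^{3/2}(\Omega)}$, again leaning on the same refined linear bounds.
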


\begin{remark}
Theorem \ref{largeforce} asserts that there exists a unique large solution in a suitable class of functions even when the external force is large.
\end{remark}

In fact, we have the following further result on the uniqueness of the solutions for the Navier-Stokes system in a strip.
\begin{theorem}\label{uniqueness}
Assume that $\BF=0$.
\begin{enumerate}
\item[(a)]
There exist constants $\varepsilon_1$ and $L_0$, such that for all  $L\leq L_0$, if $\Bu$ is a solution of the steady Navier-Stokes system \eqref{NS} in $\Omega$ supplemented the boundary conditions \eqref{BC} satisfying
\begin{equation}\label{est6-0}
\|u_2\|_{H^1(\Omega)}\le \varepsilon_1,
\end{equation}
then $\Bu\equiv\BU$, where $\BU=U(y)\Be_1$ is the Poiseuille flow with $U(y)$ defined in \eqref{Poiseuille}.
\item[(b)] There exist constants $L_0$ and $\Phi_0$, such that for all  $L\leq L_0$ and $\Phi\ge \Phi_0$, let $\Bu$ be a periodic solution of the problem \eqref{NS}-\eqref{BC} satisfying
\begin{equation}\label{est6-1}
\|u_2\|_{H^1(\Omega)}\le \Phi^{\frac{1}{60}},
\end{equation}
then $\Bu\equiv\BU$, where $\BU=U(y)\Be_1$ is the Poiseuille flow with $U(y)$ defined in \eqref{Poiseuille}.
\end{enumerate}
\end{theorem}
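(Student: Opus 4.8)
The plan is to treat the perturbation $\Bv=\Bu-\BU$ as a solution of the linear system \eqref{model12} whose right-hand side is the convective term itself, $\BF=-\Bv\cdot\nabla\Bv$, and then to run the a priori estimate of Theorem \ref{thm1} in an absorption scheme. Since $\BU=U(y)\Be_1$ has vanishing second component, one has $u_2=v_2$, so \eqref{est6-0} and \eqref{est6-1} are smallness conditions on $v_2$ directly. Standard elliptic regularity for the steady Navier--Stokes system guarantees that any solution $\Bu$ satisfying the hypotheses is smooth, hence $\Bv\in H^2(\Omega)$ and $\Bv\cdot\nabla\Bv\in L^2(\Omega)$; Theorem \ref{thm1} then applies and gives $\|\Bv\|_{H^{5/3}}\le C\|\Bv\cdot\nabla\Bv\|_{L^2}$ with $C$ independent of $\Phi$.

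The structural observation that drives part (a) is that the convective term is controlled entirely by $v_2$. Using $\div\,\Bv=0$ to write $v_1\partial_x v_1=-v_1\partial_y v_2$, every scalar term in $\Bv\cdot\nabla\Bv$ carries a factor of $v_2$ or of $\nabla v_2$:
\[
(\Bv\cdot\nabla\Bv)_1=-v_1\partial_y v_2+v_2\partial_y v_1,\qquad (\Bv\cdot\nabla\Bv)_2=v_1\partial_x v_2+v_2\partial_y v_2.
\]
Bounding the factor carrying $v_2$ or $\nabla v_2$ by $\|v_2\|_{H^1}$ and the remaining factor via the two dimensional embeddings $H^{5/3}\hookrightarrow L^\infty$, $H^1\hookrightarrow L^4$ and $H^{2/3}\hookrightarrow L^4$, I expect $\|\Bv\cdot\nabla\Bv\|_{L^2}\le C\,\|v_2\|_{H^1}\,\|\Bv\|_{H^{5/3}}$. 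Combined with \eqref{estimate11} this yields $\|\Bv\|_{H^{5/3}}\le C\|v_2\|_{H^1}\,\|\Bv\|_{H^{5/3}}$, and choosing $\varepsilon_1$ with $C\varepsilon_1<\tfrac12$ forces $\Bv\equiv 0$. As $C$ is the flux-independent constant of \eqref{estimate11}, the threshold $\varepsilon_1$ does not depend on $\Phi$, which is exactly part (a).

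For part (b) this absorption cannot close directly, since the admissible size $\Phi^{1/60}$ in \eqref{est6-1} is not small. The remedy is to exploit the large flux rather than fight it. Testing \eqref{model11} with $\Bv$ and integrating by parts, the transport term $\int_\Omega(\BU\cdot\nabla\Bv)\cdot\Bv$ and the convective term $\int_\Omega(\Bv\cdot\nabla\Bv)\cdot\Bv$ both vanish, leaving the energy identity $\|\nabla\Bv\|_{L^2}^2=-\int_\Omega U'\,v_1 v_2$; since $|U'|\le C\Phi$ and $\|v_1\|_{L^2}\le C\|\nabla\Bv\|_{L^2}$ by the Poincar\'e inequality, this gives $\|\nabla\Bv\|_{L^2}\le C\Phi\,\|v_2\|_{L^2}$. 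The strategy is then to show that any solution obeying \eqref{est6-1} in fact lies in the uniqueness class of Theorem \ref{largeforce}(b), namely $\|u_2\|_{L^2}\le\Phi^{-7/12}$; once this is established the conclusion $\Bu\equiv\BU$ follows from the uniqueness asserted there, applied with $\BF=0$ (so that the requirement $\|\BF\|_{L^2}\le\Phi^{1/32}$ holds trivially).

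The main obstacle is the complementary estimate needed to run this bootstrap: one must recover $\|v_2\|_{L^2}$ with a favorable negative power of $\Phi$, so that the interplay of the energy bound $\|\nabla\Bv\|_{L^2}\le C\Phi\|v_2\|_{L^2}$, the nonlinear bound $\|\Bv\cdot\nabla\Bv\|_{L^2}\le C\|v_2\|_{H^1}\|\Bv\|_{H^{5/3}}$, and the hypothesis $\|v_2\|_{H^1}\le\Phi^{1/60}$ becomes contractive for all $\Phi\ge\Phi_0$. This sharper control of $v_2$ is not a consequence of the statement of Theorem \ref{thm1}; it requires the boundary-layer analysis underlying its proof, applied to the $v_2$-component itself, equivalently to the vorticity equation in which $v_2$ enters through the large coupling $\tfrac32\Phi\,v_2$ together with the identity $\partial_x\omega=\Delta v_2$. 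The remaining difficulty is then purely one of bookkeeping: the exponents $1/60$ and $-7/12$, together with the $\Phi^{1/4}$ loss in the $H^2$ bound of Theorem \ref{thm1}, must be tracked through the interpolation inequalities so that the iteration indeed closes for large flux.
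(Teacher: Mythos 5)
Your part (a) is correct and is essentially the paper's own argument: both rest on the observation that, after using $\div\,\Bv=0$ to write $v_1\partial_x v_1=-v_1\partial_y v_2$, every term of $\Bv\cdot\nabla\Bv$ carries a factor $v_2$ or $\nabla v_2$, so the flux-independent linear estimate applied with $\BF=-\Bv\cdot\nabla\Bv$ closes by absorption once $\|v_2\|_{H^1(\Omega)}\le\varepsilon_1$ is small. (The paper runs this mode-wise, through the quantity $\|\Bv_0\|_{H^2(\Omega)}+\|\Q\Bv\|_{H^{5/3}(\Omega)}$ and the Hausdorff--Young inequality, rather than through Sobolev embeddings on the full norm, but for part (a) the two versions are interchangeable.)

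Part (b), however, has a genuine gap, and the route you sketch would not close it. First, the reduction to the uniqueness statement of Theorem \ref{largeforce}(b) is vacuous when $\BF=0$: the class in which that uniqueness is established consists of perturbations whose norms are bounded by $C\|\BF\|_{L^2(\Omega)}$, which equals $0$ here, so ``placing $\Bu$ in the uniqueness class'' is literally the assertion $\Bu\equiv\BU$ that you are trying to prove. Second, the energy identity you derive, $\|\nabla\Bv\|_{L^2(\Omega)}\le C\Phi\|v_2\|_{L^2(\Omega)}$, \emph{loses} a power of $\Phi$; it can never supply the negative power of $\Phi$ your bootstrap requires. Third, the step you defer as ``bookkeeping'' is in fact the entire content of part (b), and it is not obtained by applying boundary-layer analysis ``to the $v_2$-component itself'': it is the $L^2$-level flux gain already contained in the linear theory for the \emph{nonzero} Fourier modes, namely $\|\Q\Bv\|_{L^2(\Omega)}\le C\Phi^{-2/3}\|\Q\BF\|_{L^2(\Omega)}$ from Propositions \ref{mediumv} and \ref{highv}, combined with the interpolation $\|\Q\Bv\|_{H^{3/2}(\Omega)}\le\|\Q\Bv\|_{L^2(\Omega)}^{1/10}\|\Q\Bv\|_{H^{5/3}(\Omega)}^{9/10}$, which yields a net gain $\Phi^{-1/15}$ on $\|\Q\Bv\|_{H^{3/2}(\Omega)}$.

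With this gain the paper absorbs in the weighted norm $\|\Bv_0\|_{H^2(\Omega)}+\Phi^{1/30}\|\Q\Bv\|_{H^{3/2}(\Omega)}$: the nonlinear force obeys $\|\Q(\Bv\cdot\nabla\Bv)\|_{L^2(\Omega)}\le C\Phi^{1/60}\bigl(\|\Bv_0\|_{H^2(\Omega)}+\|\Q\Bv\|_{H^{3/2}(\Omega)}\bigr)$, and the weighted linear estimate turns this into a factor $\Phi^{-1/30}\cdot\Phi^{1/60}=\Phi^{-1/60}$, which is absorbable for large $\Phi$. The one structural point that makes this work --- and which your full-norm scheme from part (a) cannot see --- is the zero mode: Proposition \ref{0mode} has \emph{no} flux gain, so it is essential that its force term $(\Bv\cdot\nabla v_1)_0$ be bounded by $C\Phi^{1/60}\|\Q\Bv\|_{H^{3/2}(\Omega)}$ alone, with no $\|\Bv_0\|_{H^2(\Omega)}$ on the right; this holds precisely because $v_{2,0}\equiv0$ forces the zero mode of the nonlinearity to be a sum of products of nonzero modes. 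A term $\Phi^{1/60}\|\Bv_0\|_{H^2(\Omega)}$ could not be absorbed by anything on the left. Your undecomposed absorption gives only $\|\Bv\|\le C\Phi^{1/60}\|\Bv\|$, which does not close; so the mode decomposition is not optional bookkeeping but the heart of part (b). Note also that once the weighted absorption closes one gets $\Bv\equiv0$ directly; no intermediate bound $\|v_2\|_{L^2(\Omega)}\le\Phi^{-7/12}$ and no appeal to Theorem \ref{largeforce} is needed.
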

\begin{remark}
The uniqueness obtained in Theorem \ref{uniqueness} does not require any assumption on $u_1$. In order to prove the global unqiueness of Poiseuille flows, we need only remove the smallness assumption on $u_2$ later on.
\end{remark}
\begin{remark}
It the assumption  \eqref{est6-1} is replaced by
\begin{equation}\label{est6-3}
\|u_1-U(y)\|_{H^1(\Omega)}\leq \Phi^{\frac{1}{60}},
\end{equation}
then one can also prove the same conclusion in Theorem \ref{uniqueness}. However, the estimates obtained in Theorem \ref{largeforce} show that the vertical velocity $u_2$ decays as $\Phi$ goes to $\infty$. Hence the assumption \eqref{est6-1} seems to be more reasonable rather than \eqref{est6-3}.
\end{remark}

This paper is organized as follows. In Section \ref{Linear}, the stream function formulation for the linearized problem \eqref{model12} and \eqref{model11'}  is established and some basic a priori estimates for the stream function are given when the period $2\pi L$ is small. The existence and regularity of the solutions of the linearized problem are proved in Section \ref{sec-ex}. The  Section \ref{sec-res} devotes to establishing the  uniform a priori estimates independent of the flux $\Phi$ for the linearized problem by using boundary layer analysis. Then, the uniform nonlinear structural stability of Poiseuille flows is established in Section \ref{sec-nonlinear} with the help of the analysis on the associated linearized problem and a fixed point theorem. The well-posedness theory of the perturbed problem \eqref{model11}-\eqref{model11'} in the case for the large external force $\BF$ is also proved in Section \ref{sec-nonlinear}. The uniqueness of the solutions (Theorem \ref{uniqueness}) is proved in Section \ref{sec-unique}.  Some important lemmas which are used here and there in the paper are collected in appendix.


\section{Stream function formulation and a priori estimate}\label{Linear}
This section devotes to the basic a priori estimate for the linearized problem \eqref{model12} and \eqref{model11'} for periodic solutions. After introducing the stream function and its representation in terms of  Fourier series, the associated linearized problem is reduced into a sequence of boundary value problems of  the fourth order ODEs. The careful energy estimates for both the imaginary and the real parts of the complex ODEs give a good estimate for the solutions when the period is small. Although these estimates are not uniform with respect to the fluxes, they are enough to get the existence of solutions for the associated linear problem.

\subsection{Stream function formulation}\label{sec-stream}
For ease of notation,  for any $n\in \mathbb{Z}$, we denote $\hat{n}=\frac{n}{L}$ in the rest of the paper.
If the velocity field $\Bv$ is periodic with period $2\pi L$ in $x$-direction, then it can be written as
\begin{equation}\nonumber
\Bv=v_1(x,y)\Be_1 +v_2(x,y)\Be_2= \sum\limits_{n\in\mathbb{Z}}v_{1,n}(y)e^{i\hat{n}x}\Be_1+v_{2,n}(y)e^{i\hat{n}x}\Be_2,
\end{equation}
where
\begin{equation}\nonumber\begin{aligned}
(v_{1, n},v_{2, n})(y) &:=\frac{1}{2L\pi} \int_{-L\pi}^{L\pi} (v_1, v_2)(x,y) e^{-i\hat{n}x} \,dx.
\end{aligned}\end{equation}
Similarly, the $n$-th  mode of $\BF$ is denoted by
\begin{equation}\nonumber
\BF_n:= F_{1,n}(y)e^{i\hat{n}x}\Be_1+F_{2,n}(y)e^{i\hat{n}x}\Be_2.
\end{equation}
Since the velocity field $\Bv$ and force $\BF$ are periodic, $\nabla P$ must be periodic. Hence, one can write
\begin{equation}\nonumber
\partial_xP=\sum\limits_{n\in \Z}P_{1,n}(y)e^{i\hat{n}x}
\quad \text{and}
\quad
\partial_yP=\sum\limits_{n\in \Z}P_{2,n}(y)e^{i\hat{n}x}.
\end{equation}
Clearly, $P_{1,n}$ and $P_{2,n}$ satisfy
\begin{equation}\nonumber
i\hat{n}P_{2,n}=P_{1,n}'.
\end{equation}
Then the system \eqref{model12} becomes
\be \label{2-1-1}
\left\{\begin{aligned}
&i\hat{n}U(y)v_{1,n}+U'(y)v_{2,n}+P_{1,n}=\left(\frac{d^2}{d y^2}- {\hat{n}^2} \right)v_{1,n}+F_{1,n},\\
&i\hat{n}U(y)v_{2,n}+P_{2,n}=\left(\frac{d^2}{d y^2}- \hat{n}^2 \right)v_{2,n}+F_{2,n},\\
&\frac{d}{d y}v_{2,n}+i\hat{n}v_{1,n}=0.
\end{aligned}\right.
\ee
The  boundary condition and the flux constraint \eqref{model11'} can be written as
\be \label{2-1-2}
v_{2,n}(\pm1)  = v_{1,n}(\pm1) = 0,\quad  \int_{-1}^1 v_{1,n}(y) dy = 0.
\ee
Define
\begin{equation}\nonumber
\psi_n(y)=\left\{
\begin{aligned}
& \psi_0(-1)-\int_{-1}^y v_{1,0}(s)ds,\quad &\text{if}\,\, n= 0,\\
& -i\frac{1}{\hat{n}}v_{2,n},\quad &\text{if}\,\, n\neq 0,
\end{aligned}
\right.
\end{equation}
where $\psi_0(-1)$ is to be determined.
Then the vorticities of $\Bv_n$ and $\BF_n$ can be written as
\be \nonumber
\omega_n= i \hat{n} v_{2,n}-\frac{d}{d y}v_{1,n}=\left(\frac{d^2}{d y^2}- \hat{n}^2 \right)\psi_n \ \ \ \ \mbox{and}\ \ \ \ f_n= i\hat{n} F_{2,n}-\frac{d}{d y}F_{1,n},
\ee
respectively. It follows from \eqref{2-1-1} that $\omega_n$ satisfies
\begin{equation}\nonumber
 U''(y)v_{2,n} +i\hat{n} U(y)\omega_n -\left(\frac{d^2}{d y^2}-\hat{n}^2\right)\omega_n =f_n.
\end{equation}
Therefore, one has the following equation for $\psi_n$,
\be \label{stream}
-i\hat{n}U''(y)\psi_n+i \hat{n}U(y)\left(\frac{d^2}{d y^2}-\hat{n}^2 \right)\psi_n-\left(\frac{d^2}{d y^2}-\hat{n}^2\right)^2\psi_n= f_n.
\ee

Next, the boundary conditions \eqref{2-1-2} for $v_{1,n}$ and $v_{2,n}$ are equivalent to
\begin{equation*}\label{streamBC0}
\psi_0(1)=\psi_0(-1), \quad \psi_0'(\pm1)=0,
\quad \text{and}\quad \psi_n'(\pm1)=\psi_n(\pm1)=0\quad \text{for }n\in \mathbb{Z}, \, n\neq0.
\end{equation*}
Obviously, when $n=0$, the equation \eqref{stream} does not depend on $\psi_0$ itself and the velocity $v_{1,0}$ is the derivative of $\psi_0$. Therefore, without loss of generality, we assume that $\psi_0(1)=\psi_0(-1)=0$ so that the boundary conditions for $\psi_n$ can be written as
\begin{equation}\label{streamBC}
\psi_n'(\pm1)=\psi_n(\pm1)=0\quad \text{for }n\in \mathbb{Z}.
\end{equation}


\subsection{ A priori estimates for the stream function}\label{sec-apri}
In this subsection, some a priori estimates for the linear problem \eqref{stream} and \eqref{streamBC} are established, which guarantee the existence of solutions.
 The estimates consist in the following two lemmas.
\begin{lemma}\label{basic-est1}
Let $\psi_n(y)$ be a smooth solution of the problem \eqref{stream} and \eqref{streamBC}. There exists a constant $ L _0$, such that for all $ L \leq  L _0$, one has
\be \label{est2-1}
\int_{-1}^1 |\psi_n''|^2 +  \hat{n}^2 \left| \psi_n' \right|^2 +  \hat{n}^4 | \psi_n|^2 \, dy
\leq C \int_{-1}^1 |f_n|^2  \, dy,
\ee
where $C$ is a  uniform constant independent of $f_n$, $n$, and $\Phi$.
\end{lemma}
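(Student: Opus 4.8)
The plan is to test the fourth-order equation \eqref{stream} against $\overline{\psi_n}$, integrate over $(-1,1)$, and split the resulting identity into its real and imaginary parts, exploiting the boundary conditions \eqref{streamBC}. Throughout, $\|\cdot\|$ denotes the $L^2(-1,1)$ norm and I write $\mathcal{L}_n:=\frac{d^2}{dy^2}-\hat{n}^2$. Since $\psi_n(\pm1)=\psi_n'(\pm1)=0$, every boundary contribution in the integrations by parts vanishes, and the biharmonic term becomes the full ``energy''
\[
\int_{-1}^1 \mathcal{L}_n^2\psi_n\,\overline{\psi_n}\,dy=\int_{-1}^1|\mathcal{L}_n\psi_n|^2\,dy=\int_{-1}^1 |\psi_n''|^2+2\hat{n}^2|\psi_n'|^2+\hat{n}^4|\psi_n|^2\,dy,
\]
which already dominates the left-hand side of \eqref{est2-1}.

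Next I would take the real part of the tested identity. Because $U''=-\tfrac32\Phi$ is a real constant and $U$ is real, the term $-i\hat{n}U''\int|\psi_n|^2$ together with the terms $-i\hat{n}\int U|\psi_n'|^2$ and $-i\hat{n}^3\int U|\psi_n|^2$ produced by $i\hat{n}\int U\mathcal{L}_n\psi_n\overline{\psi_n}$ are all purely imaginary and drop out. The only convective contribution surviving in the real part comes from $-i\hat{n}\int U'\psi_n'\overline{\psi_n}$, so that
\[
\int_{-1}^1|\mathcal{L}_n\psi_n|^2\,dy=\hat{n}\,\mathrm{Im}\int_{-1}^1 U'\psi_n'\overline{\psi_n}\,dy-\mathrm{Re}\int_{-1}^1 f_n\overline{\psi_n}\,dy.
\]

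The heart of the matter, and the main obstacle, is the single flux term $\hat{n}\,\mathrm{Im}\int U'\psi_n'\overline{\psi_n}$. Its real part is harmless (integration by parts gives $\mathrm{Re}\int U'\psi_n'\overline{\psi_n}=-\tfrac12U''\int|\psi_n|^2$), but the imaginary part cannot be pinned down and must be estimated crudely, using $U'=-\tfrac32\Phi y$ and $|y|\le1$, by $\tfrac32\Phi\,|\hat{n}|\,\|\psi_n'\|\,\|\psi_n\|$. This quantity is linear in the flux $\Phi$ yet only first order in $\hat{n}$, whereas the energy supplies $\hat{n}^2\|\psi_n'\|^2$ and $\hat{n}^4\|\psi_n\|^2$. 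Young's inequality gives $\tfrac32\Phi|\hat{n}|\|\psi_n'\|\|\psi_n\|\le \hat{n}^2\|\psi_n'\|^2+\tfrac{9}{16}\Phi^2\|\psi_n\|^2$: the first piece is absorbed directly into $2\hat{n}^2\|\psi_n'\|^2$, while the second can be absorbed into $\hat{n}^4\|\psi_n\|^2$ only once $\hat{n}^4\ge\tfrac98\Phi^2$. For $n\neq0$ one has $\hat{n}^2=n^2/L^2\ge 1/L^2$, so this holds as soon as $L_0\sim\Phi^{-1/2}$. This is precisely the place where the admissible period must shrink with the flux, which is why the present estimate is not uniform in $\Phi$ even though the final constant $C$ will be; recovering the flux-uniform statement is the purpose of the boundary-layer analysis of Section \ref{sec-res}.

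After this absorption I expect to be left with $\|\psi_n''\|^2+\hat{n}^2\|\psi_n'\|^2+\tfrac12\hat{n}^4\|\psi_n\|^2\le \|f_n\|\,\|\psi_n\|$, and a short bootstrap then closes the estimate: the last term alone gives $\|\psi_n\|\lesssim \hat{n}^{-4}\|f_n\|$, and feeding this back into the right-hand side, together with $|\hat{n}|\ge1$ (ensured by $L_0\le1$), yields \eqref{est2-1} with a universal constant $C$ independent of $f_n$, $n$, and $\Phi$. Finally the mode $n=0$ has to be treated separately, since there \eqref{stream} degenerates to $-\psi_0''''=f_0$ with $\psi_0(\pm1)=\psi_0'(\pm1)=0$; testing against $\overline{\psi_0}$ and invoking the Poincaré inequality gives $\|\psi_0''\|\le C\|f_0\|$, which is exactly \eqref{est2-1} in the case $\hat{n}=0$.
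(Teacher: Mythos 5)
There is a genuine gap: your treatment of the flux term destroys exactly the property the lemma is asserting. You estimate $\hat{n}\,\mathrm{Im}\int_{-1}^1 U'\psi_n'\overline{\psi_n}\,dy$ crudely by $\tfrac32\Phi|\hat{n}|\|\psi_n'\|\|\psi_n\|$ and absorb it by Young's inequality, which forces $\hat{n}^4\gtrsim\Phi^2$, i.e.\ $L_0\sim\Phi^{-1/2}$. But in the lemma (see Remark 1.1, which stresses that $L_0$ is independent of the magnitude of the flux, pointing precisely to \eqref{2-2-7-0}), the threshold $L_0$ must be a universal constant: the lemma is applied in Proposition \ref{existence-stream} and ultimately in Theorem \ref{thm1} for a \emph{fixed} strip and \emph{arbitrarily large} $\Phi$. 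Your fallback claim that the flux-uniformity is ``recovered'' by the boundary-layer analysis of Section \ref{sec-res} misreads the structure of the paper: Section \ref{sec-res} improves the $\Phi$-dependence of the \emph{size} of the solution (from $C(1+\Phi)$ down to a uniform constant), but the existence theory of Section \ref{sec-ex} already requires the present lemma to hold for all $\Phi$ with one fixed $L_0$, which your argument does not deliver.

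The missing idea is to use the \emph{imaginary} part of the tested identity, which you discard. Since $U=\tfrac34\Phi(1-y^2)\ge 0$ and $U''=-\tfrac32\Phi$, the imaginary part, after the observation $\Re\int_{-1}^1 U'\psi_n'\overline{\psi_n}\,dy=-\tfrac12\int_{-1}^1 U''|\psi_n|^2\,dy$, yields the identity \eqref{2-2-6}:
\begin{equation*}
\frac{3\Phi\hat{n}}{4}\int_{-1}^1 (1-y^2)\left(\hat{n}^2|\psi_n|^2+|\psi_n'|^2\right)dy
=-\Im\int_{-1}^1 f_n\overline{\psi_n}\,dy+\frac{3\Phi\hat{n}}{4}\int_{-1}^1|\psi_n|^2\,dy.
\end{equation*}
The unweighted term on the right is then absorbed into the weighted left-hand side by the Hardy--Littlewood--P\'olya type inequality of Lemma \ref{lemmaHLP}, and the threshold this requires is $\tfrac13\hat{n}^2\ge 92$, i.e.\ $\hat n^2\geq 276$ --- a condition on $L$ alone, independent of $\Phi$. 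This produces $|\Phi\hat{n}|\,\|\psi_n\|_{L^2}^2\le C\bigl|\int_{-1}^1 f_n\overline{\psi_n}\,dy\bigr|$, hence $|\Phi\hat{n}|^2\|\psi_n\|_{L^2}^2\le C\|f_n\|_{L^2}^2$ (this is \eqref{2-2-8}), and it is \emph{this} bound, not brute-force absorption into $\hat n^4\|\psi_n\|^2$, that controls the flux term in your real-part identity: $|\Phi\hat{n}|\|\psi_n'\|\|\psi_n\|\le \epsilon\|\psi_n''\|^2+C|\Phi\hat{n}|^2\|\psi_n\|^2\le \epsilon\|\psi_n''\|^2+C\|f_n\|^2$. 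With that step inserted, both $C$ and $L_0$ come out independent of $\Phi$, as claimed. (Your real-part computation and your separate treatment of the mode $n=0$ are correct and agree with the paper.)
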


\begin{proof} Multiplying \eqref{stream} by the complex conjugate of $\psi_n$ and integrating the corresponding equation over $[-1,1]$ yield
\begin{equation}\label{2-2-1}
\int_{-1}^1 \left[-i \hat{n}  U''\psi_n+ i\hat{n} U\left(\frac{d^2}{d y^2}- \hat{n}^2 \right)\psi_n-\left(\frac{d^2}{d y^2}- \hat{n}^2\right)^2\psi_n\right] \overline{\psi_n}\,d y=\int_{-1}^1 f_n\overline{\psi_n}\,dy.
\end{equation}
For the last two terms on the left hand side of \eqref{2-2-1}, it follows from integration by parts and the homogeneous boundary conditions \eqref{streamBC} that one has
\begin{equation}\nonumber
\begin{aligned}
\int_{-1}^1 i\hat{n} U \left(\frac{d^2}{d y^2}- \hat{n}^2\right)\psi_n \overline{\psi_n}\,dy=&\int_{-1}^1-i\hat{n}^3U|\psi_n|^2 -i\hat{n} U|\psi_n'|^2-i\hat{n} U'\psi_n'\overline{\psi_n}\,dy
\end{aligned}
\end{equation}
and
\begin{equation}\nonumber
\begin{aligned}
\int_{-1}^1-\left(\frac{d^2}{d y^2}- \hat{n}^2\right)^2\psi_n \overline{\psi_n}\,dy=&\int_{-1}^1- \hat{n}^4 |\psi_n|^2 +2 \hat{n}^2 \psi_n''\overline{\psi_n}-\psi_n^{(4)}\overline{\psi_n}\,dy\\
=&\int_{-1}^1- \hat{n}^4 |\psi_n|^2-2 \hat{n}^2|\psi_n'|^2-|\psi''_n|^2\,dy.
\end{aligned}
\end{equation}
Then we rewrite the imaginary and real part of \eqref{2-2-1} as
\begin{equation}\label{2-2-4}
\int_{-1}^1  \hat{n}^4 |\psi_n|^2+2 \hat{n}^2|\psi_n'|^2+|\psi''_n|^2\,d y=-\Re\int_{-1}^1 f_n\overline{\psi_n}\,dy+\Im\int_{-1}^1 \hat{n} U'\psi_n'\overline{\psi_n}\,dy
\end{equation}
and
\begin{equation}\label{2-2-5}
\int_{-1}^1 \hat{n}U''|\psi_n|^2+ \hat{n}^3U|\psi_n|^2 + \hat{n}U|\psi_n'|^2\,d y=-\Im\int_{-1}^1 f_n\overline{\psi_n}\,dy-\Re\int_{-1}^1 \hat{n}U'\psi_n'\overline{\psi_n}\,dy,
\end{equation}
respectively. Note that
\begin{equation}\nonumber
\Re\int_{-1}^1 \hat{n}U'\psi'_n\overline{\psi_n}\,dy
=\frac12\int_{-1}^1 \hat{n}U'\left(\overline{\psi'_n}\psi_n+\psi_n'\overline{\psi_n}\right)\,dy=-\frac12\int_{-1}^1 \hat{n}U''|\psi_n|^2\,dy.
\end{equation}
Hence the equation \eqref{2-2-5} can be rewritten as follows,
\begin{equation}\label{2-2-6}
\frac{3\Phi\hat{n}}{4}\int_{-1}^1\hat{n}^2|\psi_n|^2(1-y^2)+ |\psi_n'|^2(1-y^2)\,d y=-\Im\int_{-1}^1 f_n\overline{\psi_n}\,dy+\frac{3\Phi\hat{n}}{4}\int_{-1}^1 |\psi_n|^2\, dy.
\end{equation}

If $|n|\ge 1$, according to Lemma \ref{lemmaHLP}, there exists a constant $ L _0$ such that for all $ L \le  L _0$, one has
\begin{equation}\label{2-2-7-0}
\int_{-1}^1 |\psi_n|^2\,dy \leq \frac13\int_{-1}^1\hat{n}^2|\psi_n|^2(1-y^2)+ |\psi_n'|^2(1-y^2)\,d y.
\end{equation}
The inequality \eqref{2-2-6}, together with \eqref{2-2-7-0}, yields that
\begin{equation}\label{2-2-7}
|\Phi\hat{n}|\int_{-1}^1|\psi_n|^2\,dy\le C \left| \int_{-1}^1 f_n\overline{\psi_n}\,dy \right| .
\end{equation}
Hence
\begin{equation}\label{2-2-8}
|\Phi\hat{n}|^2\int_{-1}^1|\psi_n|^2\,dy\le C\int_{-1}^1|f_n|^2\,dy.
\end{equation}
If $n=0$, the estimate \eqref{2-2-8} clearly holds.

The estimate \eqref{2-2-4}, together with \eqref{2-2-8}, Lemma \ref{lemmaA1} and Young's inequality, yields that
\begin{equation}\nonumber
\begin{aligned}
&\int_{-1}^1 \hat{n}^4 |\psi_n|^2+ \hat{n}^2|\psi_n'|^2+|\psi''_n|^2\,d y
\le~~ C\int_{-1}^1 |f_n\overline{\psi_n}|\,dy+C|\Phi\hat{n}|\int_{-1}^1 |\psi_n'\overline{\psi}_n|\,dy\\
\leq~~& \frac12\int_{-1}^1 |\psi_n''|^2\,dy+C\int_{-1}^1 |f_n|^2\,dy+C|\Phi\hat{n}|^2\int_{-1}^1 |\psi_n|^2\,dy\\
\leq~~& \frac12\int_{-1}^1 |\psi_n''|^2\,dy+C\int_{-1}^1 |f_n|^2\,dy.
\end{aligned}
\end{equation}
Thus one has \eqref{est2-1} and finishes the proof of the lemma.
\end{proof}


Using the similar idea as in the proof of  Lemma \ref{basic-est1}, one has the following  higher order a priori estimates.
\begin{lemma}\label{basic-est2}
Let $\psi_n(y)$ be a smooth solution of the problem \eqref{stream} and \eqref{streamBC}. For all $ L \leq  L _0$, one has
\begin{equation}\label{est2-2}
\inte  \hat{n}^4   | \psi_n''|^2
+ \hat{n}^6  | \psi_n'|^2+\hat{n}^8 |  \psi_n|^2  \, dy
\leq C (1+\Phi) \inte |f_n|^2 \,dy
\end{equation}
and
\begin{equation}\label{est2-3}
\inte |\psi_n^{(4)} |^2  \, dy \leq C (1+ \Phi^2) \inte |f_n|^2\,dy,
\end{equation}
where $C$ is a  uniform constant independent of $f_n$, $n$ and $\Phi$.
\end{lemma}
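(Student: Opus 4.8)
The plan is to mirror the energy method of Lemma \ref{basic-est1}, but now to test the equation \eqref{stream} against $\hat{n}^2\overline{\psi_n}$ and then $\hat{n}^4\overline{\psi_n}$, feeding the low-order bounds \eqref{est2-1} and \eqref{2-2-8} into the cross terms that arise. Since $\hat{n}^2$ and $\hat{n}^4$ are constants for fixed $n$, testing against $\hat{n}^{2k}\overline{\psi_n}$ simply reproduces $\hat{n}^{2k}$ times the real and imaginary identities \eqref{2-2-4} and \eqref{2-2-5}, so no new integration by parts is needed and the homogeneous boundary conditions \eqref{streamBC} continue to annihilate every boundary term. The case $n=0$ is immediate: the left-hand side of \eqref{est2-2} vanishes, while \eqref{stream} reduces to $-\psi_0^{(4)}=f_0$, which gives \eqref{est2-3}. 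Hence I assume $|n|\ge 1$ and shrink $L_0\le 1$ so that $|\hat{n}|\ge 1$.

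First I would establish the flux-\emph{independent} intermediate bound
\begin{equation}
\inte \hat{n}^2|\psi_n''|^2 + \hat{n}^4|\psi_n'|^2 + \hat{n}^6|\psi_n|^2\,dy \le C\inte|f_n|^2\,dy. \tag{$\ast$}
\end{equation}
Its left-hand side is exactly $\hat{n}^2\inte|\omega_n|^2$, i.e. $\hat{n}^2$ times the left-hand side of \eqref{2-2-4}, so multiplying \eqref{2-2-4} by $\hat{n}^2$ I must control $\hat{n}^2\Re\inte f_n\overline{\psi_n}$ and $\hat{n}^3\Im\inte U'\psi_n'\overline{\psi_n}$. The forcing term is bounded by $(\inte|f_n|^2)^{1/2}(\hat{n}^4\inte|\psi_n|^2)^{1/2}\le C\inte|f_n|^2$ using the $\hat{n}^4|\psi_n|^2$ estimate in \eqref{est2-1}. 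For the cross term I would use $|U'|\le \frac32\Phi$ and split $\hat{n}^3|\psi_n'||\psi_n|=(\hat{n}^2|\psi_n'|)(\hat{n}|\psi_n|)$; Young's inequality then absorbs a small multiple of $\hat{n}^4|\psi_n'|^2$ into the left side and leaves $C\Phi^2\hat{n}^2\inte|\psi_n|^2$, which is precisely the quantity bounded by $C\inte|f_n|^2$ in \eqref{2-2-8}. This is the step where \eqref{2-2-8} is spent and the flux drops out entirely.

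With $(\ast)$ in hand I would prove \eqref{est2-2} by testing against $\hat{n}^4\overline{\psi_n}$: the left-hand side is $\hat{n}^4\inte|\omega_n|^2=\inte \hat{n}^4|\psi_n''|^2+2\hat{n}^6|\psi_n'|^2+\hat{n}^8|\psi_n|^2$, which dominates the left side of \eqref{est2-2}. The forcing term $\hat{n}^4\Re\inte f_n\overline{\psi_n}$ is again absorbed by Young's inequality, a small multiple of $\hat{n}^8|\psi_n|^2$ returning to the left side. The decisive point is the cross term $\hat{n}^5\Im\inte U'\psi_n'\overline{\psi_n}$: rather than using Young's inequality (which would square the coefficient $U'\sim\Phi$ and produce an unwanted $\Phi^2$), I would bound it \emph{directly} by $\frac32\Phi\,\hat{n}^5(\inte|\psi_n'|^2)^{1/2}(\inte|\psi_n|^2)^{1/2}$ and then invoke $(\ast)$ in the form $\hat{n}^2(\inte|\psi_n'|^2)^{1/2}\le C(\inte|f_n|^2)^{1/2}$ and $\hat{n}^3(\inte|\psi_n|^2)^{1/2}\le C(\inte|f_n|^2)^{1/2}$, giving $C\Phi\inte|f_n|^2$. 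This closes \eqref{est2-2} with the advertised linear dependence on $\Phi$. I expect this cross term to be the main obstacle: extracting the sharp power of $\Phi$ is exactly what forces the two-step scheme, since a single test against $\hat{n}^4\overline{\psi_n}$ with Young's inequality would only deliver $(1+\Phi^2)$.

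Finally, \eqref{est2-3} follows algebraically. Solving \eqref{stream} for the top derivative gives
\[
\psi_n^{(4)} = -f_n - i\hat{n}U''\psi_n + i\hat{n}U\omega_n + 2\hat{n}^2\psi_n'' - \hat{n}^4\psi_n,
\]
and I would estimate its $L^2$ norm term by term using $|U''|=\frac32\Phi$ and $|U|\le \frac34\Phi$: the term $\hat{n}^2\Phi^2\inte|\psi_n|^2$ is controlled by \eqref{2-2-8}, the terms $\hat{n}^4\inte|\psi_n''|^2$ and $\hat{n}^8\inte|\psi_n|^2$ by \eqref{est2-2}, and the dominant term $\hat{n}^2\Phi^2\inte|\omega_n|^2$ by $(\ast)$, which yields the flux-free bound $\hat{n}^2\inte|\omega_n|^2\le C\inte|f_n|^2$. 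The worst contribution is therefore $C\Phi^2\inte|f_n|^2$, which gives \eqref{est2-3} and completes the argument.
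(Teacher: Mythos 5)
Your proposal is correct and follows essentially the same route as the paper's proof: you first derive the flux-free intermediate bound $\int_{-1}^1 \hat{n}^2|\psi_n''|^2+\hat{n}^4|\psi_n'|^2+\hat{n}^6|\psi_n|^2\,dy\le C\int_{-1}^1|f_n|^2\,dy$ by multiplying \eqref{2-2-4} by $\hat{n}^2$ and spending \eqref{est2-1} and \eqref{2-2-8}, then multiply by $\hat{n}^4$ and use that intermediate bound on the cross term to keep the dependence linear in $\Phi$, and finally solve \eqref{stream} for $\psi_n^{(4)}$ and estimate term by term. The only cosmetic differences (direct Cauchy--Schwarz instead of the paper's Young splitting on the cross term, keeping $i\hat{n}U(\psi_n''-\hat{n}^2\psi_n)$ together rather than splitting it into two pieces) do not change the argument.
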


\begin{proof}
 Multiplying \eqref{2-2-4} by $ \hat{n}^2$ gives
\begin{equation}\nonumber
\begin{aligned}
&\inte  \hat{n}^2|\psi_n''|^2   + 2 \hat{n}^4 |\psi_n'|^2+\hat{n}^6|\psi_n|^2 \,dy
\leq \,\,   \hat{n}^2 \int_{-1}^1 |f_n\overline{\psi_n}| \, dy + C \Phi |\hat{n}|^3\int_{-1}^1 |\psi_n'\overline{\psi_n}| \, dy \\
\leq \,\,&   \hat{n}^2 \int_{-1}^1 |f_n\overline{\psi_n}| \, dy +  \hat{n}^4 \int_{-1}^1 |\psi_n'|^2\, dy+C|\Phi\hat{n}|^2\int_{-1}^1 |\psi_n|^2 \, dy.
\end{aligned}
\end{equation}
It follows from Lemma \ref{lemmaA1},  Young's inequality, \eqref{est2-1} and \eqref{2-2-8} that one has
\begin{equation} \label{2-2-11}
\inte  \hat{n}^2|\psi_n''|^2+  \hat{n}^4 |\psi_n'|^2+\hat{n}^6|\psi_n|^2 \,dy\leq C\int_{-1}^1 |f_n|^2 \, dy .
\end{equation}

Similarly, multiplying \eqref{2-2-4} by $ \hat{n}^4 $ yields
\be \nonumber
\ba
&\inte  \hat{n}^4 |\psi_n''|^2   + 2\hat{n}^6|\psi_n'|^2+\hat{n}^8|\psi_n|^2 \,dy
\leq~~   \hat{n}^4  \int_{-1}^1 |f_n\overline{\psi_n}| \, dy + C\Phi |\hat{n}|^5\int_{-1}^1 |\psi_n'\overline{\psi_n}| \, dy \\
\leq~~&  \hat{n}^4  \int_{-1}^1 |f_n\overline{\psi_n}| \, dy  + C\Phi \hat{n}^4\int_{-1}^1 | \psi_n'|^2  \, dy
+ C \Phi\hat{n}^6 \int_{-1}^1 |\psi_n|^2  \, dy.
\ea \ee
By Young's inequality and the inequality \eqref{2-2-11}, one has
\be \nonumber
\inte  \hat{n}^4 |\psi_n''|^2+\hat{n}^6|\psi_n'|^2+\hat{n}^8|\psi_n|^2 \,dy
\leq~~C(1+\Phi)\int_{-1}^1 |f_n|^2\, dy,
\ee
which gives exactly the estimate \eqref{est2-2}.

Next, to get the high order regularity of $\psi_n$, one can write the equation \eqref{stream} as
\be\label{2-2-14}
\psi_n^{(4)}=-i\hat{n} U''\psi_n+i\hat{n} U\psi_n''-i\hat{n}^3U\psi_n+2 \hat{n}^2\psi_n''- \hat{n}^4 \psi_n-f_n.
\ee
Multiplying \eqref{2-2-14} by $\overline{\psi^{(4)}_n}$ and integrating over $[-1, 1]$ yield
\be\label{2-2-15}
\inte\left|\psi_n^{(4)}\right|^2\,dy=\inte\left[-i\hat{n} U''\psi_n+i\hat{n} U\psi_n''-i\hat{n}^3U\psi_n+2 \hat{n}^2\psi_n''- \hat{n}^4 \psi_n-f_n\right]\overline{\psi^{(4)}_n}\,dy.
\ee
Using Cauchy-Schwarz inequality and \eqref{2-2-8} gives
\be \label{2-2-16}
\ba
& \left|\inte i\hat{n} U''\psi_n\overline{\psi^{(4)}_n}\,dy\right|\le C|\Phi\hat{n}|\inte\psi_n\overline{\psi^{(4)}_n}\,dy\\
\leq~~&\frac18\inte\left|\psi^{(4)}_n\right|^2\,dy+C|\Phi\hat{n}|^2\inte|\psi_n|^2\,dy\\
\leq~~&\frac18\inte\left|\psi^{(4)}_n\right|^2\,dy+C\inte|f_n|^2\,dy.
\ea
\ee
By the  estimates \eqref{2-2-11}, one has
\be \label{2-2-17}
\ba
& \,\,\left|\int_{-1}^1i\hat{n}U \psi_n''  \overline{\psi^{(4)}_n} \, dy  \right|
\leq C|\Phi\hat{n}|\int_{-1}^1 \left|\psi_n''\overline{\psi^{(4)}_n}\right| \, dy \\
\leq & \, \, C|\Phi\hat{n}|^2 \int_{-1}^1 |\psi_n''|^2  \, dy +  \frac18 \int_{-1}^1 | \psi^{(4)}_n|^2   \, dy \\
 \leq &\,\, C  \Phi^2  \int_{-1}^1 |f_n|^2   \, dy +  \frac18 \int_{-1}^1 \left|\psi^{(4)}_n\right|^2   \, dy
\ea \ee
and
\be \label{2-2-18}
\ba
&\,\, \left|  \int_{-1}^1i\hat{n}^3 U \psi_n \overline{\psi^{(4)}_n}   \, dy   \right|  \leq C \Phi^2\hat{n}^6 \int_{-1}^1 |\psi_n|^2   \, dy + \frac18 \int_{-1}^1 \left|\psi^{(4)}_n\right|^2    \, dy \\
 \leq &\,\, C \Phi^2   \int_{-1}^1 |f_n|^2  \, dy +  \frac18 \int_{-1}^1 \left|\psi^{(4)}_n\right|^2  \, dy.
\ea \ee
Similarly, using estimate \eqref{est2-2} gives
\be \label{2-2-19}
\ba
& \,\,\left|   \int_{-1}^1 2 \hat{n}^2 \psi_n'' \overline{\psi_n^{(4)}}  \, dy  \right|
 \leq C  \hat{n}^4  \int_{-1}^1 | \psi_n''|^2   \, dy + \frac18 \int_{-1}^1\left|\psi_n^{(4)}\right|^2   \, dy \\
 \leq &\,\, C (1+ \Phi)  \int_{-1}^1 |f_n|^2   \, dy +  \frac18 \int_{-1}^1 \left|\psi_n^{(4)}\right|^2 \, dy
\ea\ee
and
\be \label{2-2-20}
\ba
&\,\, \left| \int_{-1}^1  \hat{n}^4 \psi_n  \overline{\psi_n^{(4)}}   \, dy   \right|
 \leq  C \hat{n}^8 \int_{-1}^1 | \psi_n|^2   \, dy + \frac18 \int_{-1}^1\left|\psi_n^{(4)}\right|^2   \, dy \\
 \leq &\,\, C (1+ \Phi)  \int_{-1}^1 |f_n|^2   \, dy +  \frac18 \int_{-1}^1 \left|\psi_n^{(4)}\right|^2 \, dy.
\ea \ee
Hence, combining all the estimates \eqref{2-2-15}--\eqref{2-2-20} gives \eqref{est2-3}. This finishes the proof of the lemma.
\end{proof}


\section{Existence and regularity of solutions for the linearized problem}\label{sec-ex}
In this section, the existence and regularity of the solutions for the linearized problem  \eqref{stream} and \eqref{streamBC} are established.
\subsection{Existence of solutions for the linearized problem}
In this subsection, the existence of solutions to the problem \eqref{stream} and \eqref{streamBC} for each fixed $n$ is established via the Galerkin method with the aid of the a priori estimates obtained in Lemmas \ref{basic-est1}-\ref{basic-est2}.

The following lemma gives the existence of an orthonormal basis of $L^2(-1, 1)$, which also belongs to $H_0^2(-1, 1) \cap H^4(-1, 1)$.
\begin{lemma}
There exists an orthonormal basis $\{\varphi_m\}\subset H^2_0(-1,1)\cap H^4(-1,1)$ of $L^2(-1,1)$.
\end{lemma}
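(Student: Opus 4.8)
The plan is to obtain $\{\varphi_m\}$ as the $L^2$-normalized eigenfunctions of the clamped fourth order operator formally given by $\frac{d^4}{dy^4}$ with boundary conditions $\varphi(\pm1)=\varphi'(\pm1)=0$. First I would introduce the Hermitian sesquilinear form
\[
a(u,v)=\inte u''\,\overline{v''}\,dy,\qquad u,v\in H_0^2(-1,1).
\]
The Poincaré inequality on $H_0^2(-1,1)$ (of the same Hardy--Littlewood--Pólya type already used for \eqref{2-2-7-0} via Lemma \ref{lemmaHLP}) shows that $a(u,u)=\inte|u''|^2\,dy$ is equivalent to the square of the full $H^2(-1,1)$ norm on $H_0^2(-1,1)$; hence $a$ is bounded and coercive, and $(H_0^2(-1,1),a)$ is a Hilbert space. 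By the Riesz representation theorem, for each $f\in L^2(-1,1)$ there is a unique $Tf\in H_0^2(-1,1)$ with
\[
a(Tf,v)=\inte f\,\overline{v}\,dy\qquad\text{for all } v\in H_0^2(-1,1),
\]
and $T$ is a bounded linear map from $L^2(-1,1)$ into $H_0^2(-1,1)$.

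Next I would verify the structural properties of $T$ needed for the spectral theorem, regarding $T$ as an operator on $L^2(-1,1)$ through the embedding $H_0^2(-1,1)\hookrightarrow L^2(-1,1)$. This embedding is compact by the Rellich--Kondrachov theorem on the bounded interval $(-1,1)$, so $T$ is a compact operator on $L^2(-1,1)$. Testing the defining relation for $Tf$ against $Tg$ and vice versa gives $(Tf,g)_{L^2}=a(Tf,Tg)=(f,Tg)_{L^2}$, so $T$ is self-adjoint, and $(Tf,f)_{L^2}=a(Tf,Tf)=\inte|(Tf)''|^2\,dy\ge 0$, so $T$ is positive. Moreover $Tf=0$ forces $(f,v)_{L^2}=0$ for all $v\in H_0^2(-1,1)$, and since $H_0^2(-1,1)$ is dense in $L^2(-1,1)$ this yields $f=0$; thus $T$ is injective. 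The spectral theorem for compact, self-adjoint, injective operators then produces an orthonormal basis $\{\varphi_m\}$ of $L^2(-1,1)$ consisting of eigenfunctions of $T$, with eigenvalues $\mu_m>0$ and $\mu_m\to0$.

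Finally I would identify the eigenfunctions and upgrade their regularity. Since $\varphi_m=\mu_m^{-1}T\varphi_m\in H_0^2(-1,1)$, the clamped boundary conditions $\varphi_m(\pm1)=\varphi_m'(\pm1)=0$ hold automatically, so $\{\varphi_m\}\subset H_0^2(-1,1)$. Writing $\lambda_m=\mu_m^{-1}>0$, the eigenrelation becomes the weak identity $a(\varphi_m,v)=\lambda_m\inte\varphi_m\,\overline{v}\,dy$ for all $v\in H_0^2(-1,1)$, which means $\varphi_m^{(4)}=\lambda_m\varphi_m$ in the sense of distributions on $(-1,1)$. As the right-hand side lies in $L^2(-1,1)$, integrating this distributional identity shows $\varphi_m\in H^4(-1,1)$, whence $\{\varphi_m\}\subset H_0^2(-1,1)\cap H^4(-1,1)$, as claimed. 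The only points requiring care are the coercivity of $a$ on $H_0^2(-1,1)$ (so that $\|u''\|_{L^2}$ controls the whole $H^2$ norm) and the completeness of the eigenbasis, which rests precisely on the injectivity of $T$ together with the density of $H_0^2(-1,1)$ in $L^2(-1,1)$; the $H^4$ regularity step is routine in one dimension.
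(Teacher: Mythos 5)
Your proposal is correct and follows essentially the same route as the paper: both construct the solution operator of the clamped biharmonic problem variationally (Lax--Milgram in the paper, equivalently Riesz representation on the energy space $(H_0^2(-1,1),a)$ in your write-up), observe that this operator is compact and symmetric on $L^2(-1,1)$, and invoke the Hilbert--Schmidt spectral theorem to obtain the orthonormal eigenbasis lying in $H_0^2(-1,1)\cap H^4(-1,1)$. Your derivation of the $H^4$ regularity from the distributional eigen-equation (rather than the paper's density argument for smooth data) and your explicit verification of injectivity for completeness of the basis are minor refinements within the same argument.
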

\begin{proof}
Let us consider the following problem,
\be\label{model31}
\left\{\ba
&\varphi^{(4)}=g,\\
&\varphi(\pm1)=\varphi'(\pm1)=0,
\ea\right.\ee
where $g \in L^2(-1,1)$. Applying Lax-Milgram theorem shows that the problem \eqref{model31} admits a unique solution in $H^2_0(-1,1)$. Indeed, for any two functions $\varphi,\phi\in H_0^2(-1,1)$, define the bilinear functional
\[ \mcL (\varphi,\phi):=\inte \varphi''\overline{\phi''}\,dy
\]
and the linear functional
\[ \mcG (\phi):=\inte g\overline{\phi}\,dy.
\]
For any $\phi\in H_0^2(-1,1)$,  Poincar\'e's inequality gives
\begin{equation}\nonumber
\mcL (\phi,\phi)=\inte |\phi''|^2\,dy\ge c_0\|\phi\|_{H_0^2(-1,1)}^2,
\end{equation}
where $c_0>0$ is a uniform constant. Moreover, it follows from H\"{o}lder's inequality that one has
\begin{equation}\nonumber
\mcL (\varphi,\phi)\le C\|\varphi\|_{H_0^2(-1,1)}\|\phi\|_{H_0^2(-1,1)}
\end{equation}
and
\begin{equation}\nonumber
\mcG (\phi)\le C\|\phi\|_{H_0^2(-1,1)}\|g\|_{L^2(-1,1)}.
\end{equation}
Hence, for any $g\in L^2(-1,1)$, the problem \eqref{model31} admits a unique solution $\varphi\in H_0^2(-1,1)$ which satisfies
\begin{equation}\nonumber
\|\varphi\|_{H_0^2(-1,1)}\le C\|g\|_{L^2(-1,1)}.
\end{equation}
Moreover, for any $g\in C^\infty([-1,1])$, the corresponding solution $\varphi\in C^\infty([-1,1])$ and satisfies
\begin{equation}\nonumber
\|\varphi\|_{H^4(-1,1)}\leq C\|g\|_{L^2(-1,1)}.
\end{equation}
Then by density argument, for $g\in L^2(-1,1)$, the problem \eqref{model31} admits a unique solution $\varphi\in H^2_0(-1,1)\cap H^4(-1,1)$ satisfying
\begin{equation}\nonumber
\|\varphi\|_{H^4(-1,1)}\leq C\|g\|_{L^2(-1,1)}.
\end{equation}
Next, define the solution
\[\varphi=\mathcal{M}g.\]
For any $g\in L^2(-1,1)$, one has
\be\nonumber
\inte \mathcal{M}g\overline{g}\,dy=\inte\varphi\overline{\varphi^{(4)}}\,dy =\inte\varphi^{(4)}\overline{\varphi}\,dy=\inte g\mathcal{M}\overline{g}\,dy.
\ee
Hence $\mathcal{M}$ is a compact symmetric operator on $L^2(-1,1)$. Owing to Hilbert-Schmidt theory (\cite{Lax}), the eigenfunctions of the operator $\mathcal{M}$ constitute an orthonormal basis of $L^2(-1,1)$.
\end{proof}

If $ L \le  L _0$, based on the a priori estimates obtained in Lemmas \ref{basic-est1}-\ref{basic-est2}, the existence of the solution to the problem \eqref{stream} and \eqref{streamBC} can be established via the standard Galerkin approximation method for every $n\in\mathbb{Z}$.
Since all the a priori estimates hold for the approximation solutions, they also hold for the
solution $\psi_n$. The uniqueness of the solution is  the direct consequence of the a priori estimates.

\begin{definition}\label{def1}
For every $n\in\mathbb{Z}$, $\varphi\in H^2_0(I)\cap H^4(I)$ with $I=(-1, 1)$, define
\begin{equation}\nonumber
\|\varphi\|_{H_{n}^4(I)}^2  : = \int_{-1}^1 |\varphi''|^2 +  \hat{n}^4  |\varphi|^2 \,dy  + \int_{-1}^1 |\varphi^{(4)}|^2 +  \hat{n}^4  |\varphi''|^2+ \hat{n}^8|\varphi|^2\, dy.
\end{equation}
\end{definition}

The existence result for $\psi_n$ follows from the a priori estimates established in Section \ref{Linear} .
\begin{pro} \label{existence-stream}
Assume that  $f_n(y)\in L^2(I)$. For any $ L \leq L _0$, there exists a unique solution $\psi_n \in H_{n}^4(I)$ to the linear problem  \eqref{stream}-\eqref{streamBC} which satisfies
\begin{equation}\nonumber
\|\psi_n\|_{H_{n}^4(I)} \leq C (1 + \Phi) \|f_n\|_{L^2(I)},
\end{equation}
where $C$ is a positive constant independent of $f_n$, $n$, $ L $, and $\Phi$.
\end{pro}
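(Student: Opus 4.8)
The plan is to obtain existence by the Galerkin method built on the orthonormal basis $\{\varphi_m\}\subset H^2_0(I)\cap H^4(I)$ from the preceding lemma, and to read off both uniqueness and the quantitative bound directly from the a priori estimates of Lemmas \ref{basic-est1}--\ref{basic-est2}. First I would record the weak formulation. Multiplying \eqref{stream} by $\overline{\phi}$ for $\phi\in H^2_0(I)$ and integrating the biharmonic term by parts twice (all boundary terms vanish since $\psi_n,\phi\in H^2_0(I)$), the problem \eqref{stream}--\eqref{streamBC} becomes: find $\psi_n\in H^2_0(I)$ with $\mcA(\psi_n,\phi)=\inte f_n\overline{\phi}\,dy$ for every $\phi\in H^2_0(I)$, where
\[
\mcA(\psi,\phi):=\inte\left[-i\hat{n}U''\psi+i\hat{n}U\left(\psi''-\hat{n}^2\psi\right)\right]\overline{\phi}\,dy-\inte\left(\psi''-\hat{n}^2\psi\right)\overline{\left(\phi''-\hat{n}^2\phi\right)}\,dy.
\]
The second integral is the Hermitian form associated with $\left(\tfrac{d^2}{dy^2}-\hat{n}^2\right)^2$. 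Note that $\mcA(\psi,\phi)$ involves $\psi$ only up to its second derivative, a fact I will use repeatedly.

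Next I would set $V_N=\mathrm{span}\{\varphi_1,\dots,\varphi_N\}$ and seek $\psi_n^N=\sum_{m=1}^N c_m\varphi_m\in V_N$ solving $\mcA(\psi_n^N,\phi)=\inte f_n\overline{\phi}\,dy$ for all $\phi\in V_N$. This is a square linear algebraic system for $(c_1,\dots,c_N)$, so its solvability reduces to showing that the homogeneous system has only the trivial solution. Since $\psi_n^N$ itself lies in $V_N$ I may take $\phi=\psi_n^N$; because $V_N\subset H^2_0(I)\cap H^4(I)$, every integration by parts used in the proof of Lemma \ref{basic-est1} is legitimate, and this choice reproduces verbatim the identities \eqref{2-2-4}--\eqref{2-2-8} with $\psi_n$ replaced by $\psi_n^N$. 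When $f_n=0$, estimate \eqref{est2-1} then forces $\psi_n^N\equiv 0$, so the Galerkin matrix is invertible and $\psi_n^N$ exists and is unique for each $N$.

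With $f_n\neq 0$, the same substitution $\phi=\psi_n^N$, followed by multiplying the resulting scalar identity \eqref{2-2-4} by $\hat{n}^2$ and $\hat{n}^4$ exactly as in the proof of Lemma \ref{basic-est2}, shows that \eqref{est2-1} and \eqref{est2-2} hold for $\psi_n^N$ uniformly in $N$; in particular $\{\psi_n^N\}$ is bounded in $H^2(I)$. I would then extract a subsequence with $\psi_n^N\rightharpoonup\psi_n$ weakly in $H^2(I)$. Since $\mcA(\cdot,\phi)$ depends linearly and continuously on the unknown through at most its second derivative, and the coefficients $U,U',U''$ together with the fixed test function $\phi$ are paired against it in $L^2(I)$, weak $H^2$ convergence is enough to pass to the limit; because $\bigcup_N V_N$ is dense in $H^2_0(I)$ (the $\varphi_m$ are orthogonal and complete for the $H^2_0$ inner product as well), the limit satisfies $\mcA(\psi_n,\phi)=\inte f_n\overline{\phi}\,dy$ for all $\phi\in H^2_0(I)$. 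At this stage $\psi_n\in H^2_0(I)$ is only a weak solution; the equation reads $\left(\tfrac{d^2}{dy^2}-\hat n^2\right)^2\psi_n=-i\hat nU''\psi_n+i\hat nU(\psi_n''-\hat n^2\psi_n)-f_n$ in the distributional sense, whose right-hand side lies in $L^2(I)$, so elliptic bootstrap gives $\psi_n^{(4)}\in L^2(I)$, i.e.\ $\psi_n\in H^4(I)$. Now that $\psi_n$ is a genuine $H^4$ solution of \eqref{stream}--\eqref{streamBC}, Lemmas \ref{basic-est1}--\ref{basic-est2} apply directly to it, yielding \eqref{est2-1}, \eqref{est2-2} and \eqref{est2-3}; combining these through Definition \ref{def1} (the dominant contribution being $\|\psi_n^{(4)}\|_{L^2}\le C(1+\Phi)\|f_n\|_{L^2}$ from \eqref{est2-3}) produces the stated bound $\|\psi_n\|_{H_n^4(I)}\le C(1+\Phi)\|f_n\|_{L^2(I)}$. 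Uniqueness follows once more from \eqref{est2-1} applied to the difference of two solutions.

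The routine parts are the finite-dimensional linear algebra and the density argument. The step I expect to require the most care is the logical separation between the two regularity levels: the Galerkin approximations only inherit the $H^2$-level bounds \eqref{est2-1}--\eqref{est2-2} (testing against $\psi_n^N\in V_N$ never reaches the fourth-derivative estimate), so the $\psi^{(4)}$ bound \eqref{est2-3} cannot be imposed on the approximations and must instead be recovered on the limit via elliptic regularity before Lemma \ref{basic-est2} can be invoked. The second delicate point is confirming that the choice $\phi=\psi_n^N$ reconstructs the energy identities \eqref{2-2-4}--\eqref{2-2-5} exactly, rather than up to error terms; this is precisely what the clamped boundary conditions and the $H^4$ regularity of the basis functions guarantee, so that no spurious boundary contributions arise in the integrations by parts.
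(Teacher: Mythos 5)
Your proof is correct, and at its core it is the same argument the paper sketches: Galerkin approximation on the clamped-biharmonic eigenfunction basis, invertibility of the finite-dimensional systems from the homogeneous case of the a priori estimates, uniform bounds from Lemmas \ref{basic-est1}--\ref{basic-est2}, weak passage to the limit, and uniqueness from \eqref{est2-1}. The one genuine divergence is where the fourth-order bound is obtained. The paper asserts that \emph{all} the a priori estimates, including \eqref{est2-3}, already hold for the approximations $\psi_n^N$; your side remark that this is impossible (``testing against $\psi_n^N$ never reaches the fourth-derivative estimate'') is in fact not accurate for this particular basis. Since each $\varphi_m$ is an eigenfunction of the solution operator $\mathcal{M}$ of \eqref{model31}, one has $\varphi_m^{(4)}=\lambda_m^{-1}\varphi_m$, so $V_N$ is invariant under $\frac{d^4}{dy^4}$ and $(\psi_n^N)^{(4)}\in V_N$ is an admissible test function; testing the Galerkin relation with it (and undoing the integrations by parts, which is legitimate since $V_N\subset H^2_0(I)\cap H^4(I)$) reproduces \eqref{2-2-15} and hence \eqref{est2-3} at the approximate level, which is what justifies the paper's one-line claim. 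That said, your workaround is perfectly sound: keep only the $H^2$-level bounds \eqref{est2-1}--\eqref{est2-2} for the approximations, pass to the weak $H^2$ limit, recover $\psi_n\in H^4(I)$ by reading the equation distributionally (its right-hand side lies in $L^2(I)$), and only then invoke Lemmas \ref{basic-est1}--\ref{basic-est2} on the limit. Your route is slightly longer but more robust---it never uses the eigenfunction structure, so it would work for any Galerkin family dense in $H^2_0(I)$---whereas the paper's route is shorter once one notices that $\frac{d^4}{dy^4}$ preserves $V_N$.
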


\subsection{Regularity of the velocity field} \label{sec-reg}
In this subsection, we investigate the properties of functions in
$H_{n}^4(I)$ and the regularity of $\Bv$.

\begin{lemma}\label{reg-stream}
Let $\varphi_n$ be a function in $H_{n}^4(I)$  defined in Definition \ref{def1}. Then there exists a positive constant $C$, independent of $n$, $ L $, and $\varphi_n$, such that
\be \nonumber
\ba
 \int_{-1}^1|\varphi_n |^2 +|\varphi_n'|^2 + \hat{n}^2 |\varphi_n |^2+\hat{n}^2|\varphi_n'|^2+ \left|\varphi_n^{(3)}\right|^2+ \hat{n}^2 |\varphi_n''|^2 \,dy&~~\\
+ \inte \hat{n}^4  \left| \varphi_n'\right|^2 + \hat{n}^6 | \varphi_n |^2 +\hat{n}^2\left| \varphi_n^{(3)}\right|^2 + \hat{n}^6 | \varphi_n'|^2\, dy&\leq C \|\varphi_n\|_{H_{n}^4(I)}^2.
\ea \ee
\end{lemma}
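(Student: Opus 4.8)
The plan is to bound every one of the ten terms on the left-hand side by the five ``basic'' quantities that constitute $\|\varphi_n\|_{H_n^4(I)}^2$ in Definition \ref{def1}, namely
\[
\inte|\varphi_n''|^2\,dy,\quad \hat n^4\inte|\varphi_n|^2\,dy,\quad \inte|\varphi_n^{(4)}|^2\,dy,\quad \hat n^4\inte|\varphi_n''|^2\,dy,\quad \hat n^8\inte|\varphi_n|^2\,dy.
\]
The only structural inputs I will use are that $\varphi_n\in H^2_0(I)$, so $\varphi_n(\pm1)=\varphi_n'(\pm1)=0$, together with Poincar\'e's inequality and the standard interpolation inequality on the \emph{fixed} interval $I=(-1,1)$. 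Since these inequalities live on $I$ and contain no power of $\hat n$ or $L$, every constant produced below is automatically independent of $n$ and $L$; the powers of $\hat n$ will then be redistributed purely algebraically by Young's inequality so that each resulting factor is one of the five basic quantities.

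For the terms involving only $\varphi_n$, $\varphi_n'$ and $\varphi_n''$ the argument is elementary. Poincar\'e applied twice (using $\varphi_n(\pm1)=\varphi_n'(\pm1)=0$) gives $\inte|\varphi_n|^2\,dy\le C\inte|\varphi_n'|^2\,dy\le C\inte|\varphi_n''|^2\,dy$, which handles the two unweighted terms. The three pure-weight terms follow from the elementary bounds $\hat n^2X\le\frac12(X+\hat n^4X)$ and $\hat n^6X\le\frac12(\hat n^4X+\hat n^8X)$ applied to $X=\inte|\varphi_n|^2\,dy$ or $X=\inte|\varphi_n''|^2\,dy$, all of whose endpoints are basic. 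The terms carrying $\varphi_n'$ rest on the single integration by parts
\[
\inte|\varphi_n'|^2\,dy=-\inte\varphi_n''\,\overline{\varphi_n}\,dy\le\left(\inte|\varphi_n''|^2\,dy\right)^{\!1/2}\left(\inte|\varphi_n|^2\,dy\right)^{\!1/2},
\]
whose boundary term vanishes because $\varphi_n(\pm1)=0$. Multiplying by $\hat n^{2k}$ and splitting the weight as $\hat n^{2k}=\hat n^{2a}\hat n^{2b}$ with $a+b=k$ chosen so that $\hat n^{4a}\inte|\varphi_n''|^2\,dy$ and $\hat n^{4b}\inte|\varphi_n|^2\,dy$ are basic (take $(a,b)=(0,1),(0,2),(1,2)$ for the weights $\hat n^2,\hat n^4,\hat n^6$ respectively), Young's inequality controls each of these terms.

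The genuinely new ingredient is the control of the third-derivative terms, and this is the step I expect to be the main obstacle, precisely because $\varphi_n''$ carries \emph{no} boundary condition, so a naive integration by parts for $\inte|\varphi_n^{(3)}|^2\,dy$ leaves uncontrolled boundary contributions. I circumvent this by applying the standard interpolation inequality on $I$ to $w=\varphi_n''\in H^2(I)$, in the multiplicative-additive form $\inte|w'|^2\,dy\le C\big(\inte|w''|^2\,dy\big)^{1/2}\big(\inte|w|^2\,dy\big)^{1/2}+C\inte|w|^2\,dy$ (the additive term being necessary to accommodate affine $w$), which reads
\[
\inte|\varphi_n^{(3)}|^2\,dy\le C\left(\inte|\varphi_n^{(4)}|^2\,dy\right)^{\!1/2}\left(\inte|\varphi_n''|^2\,dy\right)^{\!1/2}+C\inte|\varphi_n''|^2\,dy\le C\inte|\varphi_n^{(4)}|^2\,dy+C\inte|\varphi_n''|^2\,dy.
\]
This settles the unweighted third-derivative term. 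For the weighted one I multiply the first inequality by $\hat n^2$, writing $\hat n^2\big(\inte|\varphi_n^{(4)}|^2\big)^{1/2}\big(\inte|\varphi_n''|^2\big)^{1/2}=\big(\inte|\varphi_n^{(4)}|^2\big)^{1/2}\big(\hat n^4\inte|\varphi_n''|^2\big)^{1/2}$ and applying Young, so that $\hat n^2\inte|\varphi_n^{(3)}|^2\,dy$ is dominated by $\inte|\varphi_n^{(4)}|^2\,dy$, $\hat n^4\inte|\varphi_n''|^2\,dy$ and $\hat n^2\inte|\varphi_n''|^2\,dy$, the last of which was already treated above. Collecting the estimates for all ten terms yields the asserted bound with a constant $C$ depending only on the interpolation constants on $I$, hence independent of $n$, $L$ and $\varphi_n$.
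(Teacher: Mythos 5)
Your proposal is correct and follows essentially the same route as the paper: Poincar\'e for the unweighted low-order terms, Cauchy--Schwarz/Young redistribution of powers of $\hat{n}$ for the pure-weight terms, integration by parts with weight splitting for the $\varphi_n'$ terms, and an interpolation inequality for the third-derivative terms. The ``standard interpolation inequality'' you invoke for $w=\varphi_n''$ is exactly the content of the paper's Lemma \ref{lemmaA2} (estimate \eqref{A2-2}), which the paper applies at the same point in its proof, so the two arguments coincide step for step.
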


\begin{proof}For simplicity, assume that $\varphi_n \in C^\infty([-1,1])$ with $\varphi_n(\pm1)=\varphi_n'(\pm1)=0$. Following the proof
of Lemma \ref{lemmaA1} and using the homogeneous boundary conditions for $ \varphi_n$ give
\be \nonumber
\int_{-1}^1 \left| \varphi_n\right|^2  +\left| \varphi_n'\right|^2  \, dy\leq  C\int_{-1}^1 |\varphi_n''|^2  \, dy \leq C \|\varphi_n\|_{H_{n}^4(I)}^2.
\ee
Furthermore, it follows from  Cauchy-Schwarz inequality that one has
\be \nonumber
\begin{aligned}
&\int_{-1}^1  \hat{n}^2  |\varphi_n|^2 +  \hat{n}^6 |\varphi_n|^2 +  \hat{n}^2 |\varphi_n''|^2 \,dy \\
\leq&
\int_{-1}^1    |\varphi_n|^2 +  \hat{n}^4 |\varphi_n|^2+ \hat{n}^8 |\varphi_n|^2+ |\varphi_n''|^2+ \hat{n}^4  |\varphi_n''|^2 \,dy \leq C \|\varphi_n\|_{H_{n}^4(I)}^2.
\end{aligned}\ee

Using integration by parts and the homogeneous boundary conditions for $\varphi_n$ yields
\be \nonumber \ba
\int_{-1}^1 \hat{n}^2\left|\varphi_n' \right|^2\, dy
 =  -  \int_{-1}^1\hat{n}^2\varphi_n''\overline{\varphi_n}   \, dy\leq C\int_{-1}^1 |\varphi_n|^2  + \hat{n}^4  |\varphi_n''|^2   \, dy.
\ea \ee
This implies
\be \nonumber
 \int_{-1}^1  \hat{n}^2\left|\varphi_n' \right|^2\, dy
 \leq C \|\varphi_n\|_{H_{n}^4(I) }^2.
\ee

Similarly, one has
\be\nonumber
\int_{-1}^1 \hat{n}^4  \left|\varphi_n '\right|^2 +\hat{n}^6 \left|\varphi_n '\right|^2   \, dy
\leq C \|\varphi_n\|_{H_{n}^4(I)}^2.
\ee
Moreover, Lemma \ref{lemmaA2} gives
\be \label{4-1-6}
 \int_{-1}^1 \left|\varphi_n^{(3)}\right|^2\, dy
\leq C \int_{-1}^1 |\varphi_n''|^2  \, dy + C\left(\int_{-1}^1 |\varphi_n''|^2  \, dy\right)^\frac12\left(\int_{-1}^1\left|\varphi_n^{(4)}\right|^2  \, dy\right)^\frac12.
 \ee
Hence one has
\begin{equation}\nonumber
 \int_{-1}^1 \left|\varphi_n^{(3)}\right|^2\, dy
 \leq C\|\varphi_n\|_{H_{n}^4(I)}^2.
\end{equation}

Multiplying \eqref{4-1-6} by $ \hat{n}^2$ gives
\be \nonumber
\begin{aligned}
\int_{-1}^1 \hat{n}^2 \left|\varphi_n^{(3)}\right|^2\, dy
\leq &C\int_{-1}^1  \hat{n}^2|\varphi_n''|^2  \, dy + C \hat{n}^2\left(\int_{-1}^1 |\varphi_n''|^2  \, dy\right)^\frac12\left(\int_{-1}^1\left|\varphi_n^{(4)}\right|^2  \, dy\right)^\frac12\\
\leq \,\,& C\int_{-1}^1\left(1+ \hat{n}^4 \right)  |\varphi_n''|^2   \, dy
+ C\int_{-1}^1 \left|\varphi_n^{(4)}\right|^2   \, dy\\
\leq \,\,& C\|\varphi_n\|_{H_{n}^4(I)}^2.
\end{aligned}
\ee
This finishes the proof of the lemma.
\end{proof}

For any function $\psi_n \in H_{n}^4(I)$, let
\begin{equation}\nonumber
\Bv_n=v_{1,n}(y)e^{i\hat{n}x}\Be_1+v_{2,n}(y)e^{i\hat{n}x}\Be_2=-\psi_n'(y)e^{i\hat{n}x}\Be_1+i\hat{n} \psi_n(y) e^{i\hat{n}x}\Be_2
\end{equation}
and
\begin{equation}\nonumber
\Bo_n=\left(\frac{d^2}{dy^2}- \hat{n}^2\right)\psi_n(y)e^{i\hat{n}x}
\end{equation}
be the corresponding velocity field and vorticity, respectively. First, one has the following $H^3(\Omega)$-bound of $\Bv_n$.
\begin{lemma}\label{reg-velocity}
Assume that $\psi_n\in H_{n}^4(I)$. There exists a constant $C$, independent of $n$, $ L $, and $\psi_n$, such that
\begin{equation}\nonumber
\|\Bv_n\|_{H^3(\Omega)}\leq C  L ^\frac12\|\psi_n \|_{H_{n}^4(I)}.
\end{equation}
\end{lemma}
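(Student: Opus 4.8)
The plan is to expand the $H^3(\Omega)$ norm into a finite sum of $L^2(\Omega)$ norms of the derivatives $\partial_x^a\partial_y^b \Bv_n$ with $a+b\le 3$, and to exploit the separated structure of each component of $\Bv_n$, which is of the form $g(y)e^{i\hat{n}x}$. First I would record that for any smooth $g$,
$$\partial_x^a\partial_y^b\left(g(y)e^{i\hat{n}x}\right)=(i\hat{n})^a g^{(b)}(y)e^{i\hat{n}x},$$
so an $x$-derivative only contributes a factor $i\hat{n}$ while a $y$-derivative differentiates $g$. Since $|e^{i\hat{n}x}|=1$ and the period in $x$ is $2L\pi$, integrating in $x$ over one period gives
$$\left\|g(y)e^{i\hat{n}x}\right\|_{L^2(\Omega)}^2=2L\pi\int_{-1}^1|g(y)|^2\,dy.$$
This identity is precisely where the factor $L^{\frac12}$ in the claimed bound originates.

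Applying this to $v_{1,n}e^{i\hat{n}x}=-\psi_n'(y)e^{i\hat{n}x}$ and $v_{2,n}e^{i\hat{n}x}=i\hat{n}\psi_n(y)e^{i\hat{n}x}$, I would obtain that $\|\Bv_n\|_{H^3(\Omega)}^2$ is bounded by $2L\pi$ times a finite sum of weighted one-dimensional integrals: for the first component these are of the form $\hat{n}^{2a}\int_{-1}^1|\psi_n^{(b+1)}|^2\,dy$, and for the second component of the form $\hat{n}^{2(a+1)}\int_{-1}^1|\psi_n^{(b)}|^2\,dy$, with $a+b\le 3$ in both cases.

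The remaining step is purely a matter of matching each of these weighted integrals against a quantity already controlled by $\|\psi_n\|_{H_n^4(I)}^2$. Some of them, such as $\int_{-1}^1|\psi_n''|^2$, $\hat{n}^4\int_{-1}^1|\psi_n''|^2$, $\hat{n}^8\int_{-1}^1|\psi_n|^2$ and $\int_{-1}^1|\psi_n^{(4)}|^2$, appear directly in Definition \ref{def1}; the rest, for instance $\hat{n}^2\int_{-1}^1|\psi_n^{(3)}|^2$, $\hat{n}^6\int_{-1}^1|\psi_n'|^2$, and $\hat{n}^2\int_{-1}^1|\psi_n|^2$, are exactly the quantities estimated in Lemma \ref{reg-stream}. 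Collecting all terms and absorbing the numerical constant $\sqrt{2\pi}$ into $C$ gives $\|\Bv_n\|_{H^3(\Omega)}\le CL^{\frac12}\|\psi_n\|_{H_n^4(I)}$.

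Since every individual calculation is elementary, the only real care needed, and the step most prone to error, is the bookkeeping: one must check that the complete list of weighted derivatives $\hat{n}^{2a}\int_{-1}^1|\psi_n^{(b+1)}|^2\,dy$ and $\hat{n}^{2(a+1)}\int_{-1}^1|\psi_n^{(b)}|^2\,dy$ arising from $a+b\le 3$ is entirely covered by Definition \ref{def1} together with Lemma \ref{reg-stream}, with no term left over. In particular one should verify that the highest mixed terms $\hat{n}^2\int_{-1}^1|\psi_n^{(3)}|^2\,dy$, coming from $v_{2,n}$ with $a=0,\,b=3$ and from $v_{1,n}$ with $a=1,\,b=2$, are indeed among those supplied by Lemma \ref{reg-stream}, which is the case.
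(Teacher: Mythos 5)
Your proof is correct; I checked the bookkeeping and every weighted integral in your two lists is indeed covered: for the first component $-\psi_n'e^{i\hat{n}x}$ the terms $\int|\psi_n''|^2$, $\hat{n}^4\int|\psi_n''|^2$, $\int|\psi_n^{(4)}|^2$ come from Definition \ref{def1}, while $\int|\psi_n'|^2$, $\hat{n}^2\int|\psi_n'|^2$, $\hat{n}^4\int|\psi_n'|^2$, $\hat{n}^6\int|\psi_n'|^2$, $\int|\psi_n^{(3)}|^2$, $\hat{n}^2\int|\psi_n''|^2$, $\hat{n}^2\int|\psi_n^{(3)}|^2$ come from Lemma \ref{reg-stream}; for the second component $i\hat{n}\psi_ne^{i\hat{n}x}$ the terms $\hat{n}^4\int|\psi_n|^2$, $\hat{n}^8\int|\psi_n|^2$, $\hat{n}^4\int|\psi_n''|^2$ come from the definition and the remaining seven from Lemma \ref{reg-stream}, with nothing left over. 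However, your route differs from the paper's. The paper only computes two things directly: $\|\Bv_n\|_{L^2(\Omega)}$ and the $H^2(\Omega)$ norm of the vorticity $\omega_ne^{i\hat{n}x}$ with $\omega_n=\bigl(\frac{d^2}{dy^2}-\hat{n}^2\bigr)\psi_n$; it then observes that $\Delta\Bv_n=\bigl(i\hat{n}\omega_n,-\frac{d}{dy}\omega_n\bigr)^Te^{i\hat{n}x}$ and invokes elliptic regularity with the homogeneous Dirichlet boundary conditions (\cite[Theorem 8.12]{GT}) to convert the $H^1$ bound on $\nabla(\omega_ne^{i\hat{n}x})$ plus the $L^2$ bound on $\Bv_n$ into the $H^3$ bound on $\Bv_n$. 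Your approach replaces that external regularity theorem by a complete direct expansion of the $H^3(\Omega)$ norm, which is more elementary and self-contained (the boundary conditions enter only through Lemma \ref{reg-stream}, which you cite), at the cost of heavier term-by-term bookkeeping; the paper's detour through the vorticity keeps the bookkeeping lighter by delegating the top-order control to a standard elliptic estimate. Both yield the same $CL^{\frac12}$ bound.
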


\begin{proof}
Note that $v_{2,n} =  i  n  \psi_n $ and $v_{1,n} = -\psi_n'$. By virtue of Lemma
\ref{reg-stream}, one has
\begin{equation}\nonumber
\begin{aligned}
\|\Bv_n\|_{L^2(\Omega)}^2=&\int_{-\pi L}^{\pi L}\int_{-1}^1 \left|\hat{n} \psi_n e^{i\hat{n}x}\right|^2 +\left| \psi_n'e^{i\hat{n}x}\right|^2 \, dydx\\
\leq& 2\pi L \int_{-1}^1  \hat{n}^2 |\psi_n|^2 +|\psi_n'|^2 \, dy
\leq 2\pi L \|\psi_n\|_{H_{n}^4(I) }^2
\end{aligned}
\end{equation}
and
\begin{equation}\nonumber
\ba
\left\|\Bo_n\right\|_{H^2(\Omega)}^2\leq&~~ C L \inte \left|\left( \frac{d^2}{dy^2}- \hat{n}^2\right)\psi_n\right|^2+\left|\frac{d}{dy}\left( \frac{d^2}{dy^2}- \hat{n}^2 \right)\psi_n\right|^2\,dy\\
&~~+C L \inte  \hat{n}^2\left|\left( \frac{d^2}{dy^2}- \hat{n}^2 \right)\psi_n\right|^2+\left|\frac{d^2}{dy^2}\left( \frac{d^2}{dy^2}- \hat{n}^2 \right)\psi_n\right|^2\,dy\\
&~~+C L \inte  \hat{n}^2\left|\frac{d}{dy}\left( \frac{d^2}{dy^2}- \hat{n}^2 \right)\psi_n\right|^2+ \hat{n}^4 \left|\left( \frac{d^2}{dy^2}- \hat{n}^2 \right)\psi_n\right|^2\,dy\\
\leq&~~C L \|\psi_n\|_{H_{n}^4(I)}^2.
\ea
\end{equation}

Moreover, the straightforward computations give that
\be \label{vorticity}
\Delta \Bv_n = \left(i\hat{n}\omega_n,-\frac{d}{dy}\omega_n\right)^Te^{i\hat{n}x}\ \ \ \ \mbox{in}\ \Omega.
\ee
Applying the homogeneous boundary conditions and the regularity theory (\cite[Theorem 8.12]{GT}) for the elliptic equation \eqref{vorticity} yields
\be \nonumber
\| \Bv_n \|_{H^3(\Omega) } \leq C \left\|\nabla(\omega_n e^{i\hat{n}x})\right\|_{H^1(\Omega)}+C\| \Bv_n \|_{L^2(\Omega) }\leq C  L ^\frac12 \|\psi_n\|_{H_{n}^4(I)}.
\ee
This completes the proof of Lemma \ref{reg-velocity}.
\end{proof}

The existence of solution of the problem  \eqref{model12} and \eqref{model11'} can be established.
\begin{pro}\label{back} For any $ L \leq  L _0$, if $\psi_n$ is the solution obtained in Proposition \ref{existence-stream}, then $\Bv=\sum\limits_{n\in\mathbb{Z}}\Bv_n$ is a strong solution to the problem \eqref{model12} and \eqref{model11'}  and satisfies
\be \nonumber
\|\Bv\|_{H^3(\Omega)} \leq C (1 + \Phi ) \| \BF\|_{H^1(\Omega)}.
\ee
\end{pro}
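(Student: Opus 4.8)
The plan is to combine the mode-wise bounds already established---Proposition \ref{existence-stream} for the stream function and Lemma \ref{reg-velocity} for the velocity---with the orthogonality of the Fourier modes $e^{i\hat n x}$, and then to recover the pressure from the vorticity formulation so as to verify that $\Bv$ is a genuine strong solution.

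I would first prove convergence of the series together with the stated estimate. Since the exponentials $e^{i\hat n x}$ are mutually orthogonal on $(-L\pi, L\pi)$ and each $x$-derivative merely multiplies the $n$-th mode by $i\hat n$, the Sobolev norm decomposes as $\|\Bv\|_{H^3(\Omega)}^2 = \sum_{n\in\mathbb Z}\|\Bv_n\|_{H^3(\Omega)}^2$. Chaining Lemma \ref{reg-velocity} and Proposition \ref{existence-stream} gives, for each $n$,
\[
\|\Bv_n\|_{H^3(\Omega)}^2 \le C L \,\|\psi_n\|_{H_{n}^4(I)}^2 \le C L (1+\Phi)^2 \|f_n\|_{L^2(I)}^2 .
\]
The decisive observation is that $f_n = i\hat n F_{2,n} - \frac{d}{dy}F_{1,n}$ is exactly the $n$-th Fourier coefficient of $\curl\BF = \partial_x F_2 - \partial_y F_1$; hence Parseval's identity yields $\sum_n \|f_n\|_{L^2(I)}^2 = \frac{1}{2\pi L}\|\curl\BF\|_{L^2(\Omega)}^2 \le \frac{1}{\pi L}\|\BF\|_{H^1(\Omega)}^2$. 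Summing the mode-wise bound over $n$, the prefactor $L$ cancels against the $1/L$ coming from Parseval, and one lands on $\|\Bv\|_{H^3(\Omega)}^2 \le C(1+\Phi)^2\|\BF\|_{H^1(\Omega)}^2$. Finiteness of this sum shows the partial sums are Cauchy in $H^3(\Omega)$, so $\Bv\in H^3(\Omega)$ is well defined; moreover each $\Bv_n$ is divergence free by construction, so $\div\,\Bv=0$.

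Next I would check that $\Bv$ solves \eqref{model12} and \eqref{model11'}. The boundary and flux conditions follow mode by mode from \eqref{streamBC}, since $v_{1,n}(\pm1) = -\psi_n'(\pm1)=0$, $v_{2,n}(\pm1)=i\hat n\psi_n(\pm1)=0$, and $\int_{-1}^1 v_{1,0}\,dy = -\bigl(\psi_0(1)-\psi_0(-1)\bigr)=0$. To recover the equation itself I would reconstruct the pressure: for each mode, the first two lines of \eqref{2-1-1} are solved algebraically for $P_{1,n}$ and $P_{2,n}$. The crucial fact is that the compatibility relation $i\hat n P_{2,n}=P_{1,n}'$ is precisely the curl of the momentum equation, i.e. the ODE \eqref{stream} that $\psi_n$ was built to satisfy; this guarantees the $P_{1,n}, P_{2,n}$ are consistent and assemble into a single $\nabla P$. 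For the zeroth mode one has $v_{2,0}\equiv 0$ and $-\psi_0^{(4)}=f_0=-F_{1,0}'$, which force $P_{1,0}$ to be a constant (the driving pressure gradient) and $P_{2,0}=F_{2,0}$, again consistent. Because the series converges in $H^3(\Omega)$ and differentiation up to second order maps continuously into $L^2(\Omega)$, passing to the limit shows that $\Bv$ together with the reconstructed $P$ satisfies \eqref{model12} strongly.

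The most delicate point is the pressure bookkeeping: one must confirm that the mode-wise pressures carry enough summability and regularity to produce $\nabla P\in L^2(\Omega)$, and that the compatibility condition genuinely coincides with \eqref{stream} rather than merely resembling it. By contrast, the norm splitting, the two cited estimates, and the Parseval computation for $\curl\BF$ are routine; the only quantitative subtlety there is the exact cancellation of the period $L$, which is what renders the final constant independent of $L$.
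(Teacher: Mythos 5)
Your proposal is correct and follows essentially the same route as the paper: the quantitative bound comes from chaining Lemma \ref{reg-velocity} with Proposition \ref{existence-stream} mode by mode and summing via orthogonality (the paper's inequality $L^{\frac12}\|f_n\|_{L^2(I)}\le C\|\BF_n\|_{H^1(\Omega)}$ is exactly your mode-wise Parseval observation, with the same cancellation of $L$), while the equation is recovered by noting that \eqref{stream} is precisely the curl of the momentum equation, so a pressure $\nabla P_n\in L^2(\Omega)$ exists for each mode. Your explicit verification that the compatibility relation $i\hat{n}P_{2,n}=P_{1,n}'$ coincides with \eqref{stream}, and your attention to summability of the mode-wise pressures, simply spell out details the paper leaves implicit.
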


\begin{proof} Since $\psi_n$ is a solution to the problem \eqref{stream} and \eqref{streamBC}, direct computation yields
\be \nonumber
{\rm curl}~\left( (\boldsymbol{U}\cdot \nabla ) \Bv_n + (\Bv_n \cdot \nabla) \boldsymbol{U}  \right) - {\rm curl}~(\Delta \Bv_n) = {\rm curl}~\BF_n.
\ee
Hence, for each $n$ there exists some function $P_n$ with $\nabla P_n\in L^2(\Omega)$, such that
\be \nonumber
 (\boldsymbol{U}\cdot \nabla ) \Bv_n + (\Bv_n \cdot \nabla) \boldsymbol{U} - \Delta \Bv_n + \nabla P_n = \BF_n.
\ee
Therefore $\Bv=\sum\limits_{n\in\mathbb{Z}}\Bv_n$ is a strong solution to the problem \eqref{model12} and \eqref{model11'}.  According to Lemma \ref{reg-velocity} and Proposition \ref{existence-stream}, one has
\be \nonumber
\|\Bv_n\|_{H^3(\Omega)} \leq C  L ^\frac12 \|\psi_n\|_{H_{n}^4(I)}\leq C  L ^\frac12(1 + \Phi) \|f_n\|_{L^2(I)} \leq C (1 + \Phi) \|\BF_n\|_{H^1(\Omega)}.
\ee
This finishes the proof of the proposition.
\end{proof}

\section{Uniform estimate  with respect to the flux}\label{sec-res}
The goal of this section is to establish the uniform estimate with respect to the flux for the solution of the problem \eqref{model12} and \eqref{model11'}. We investigate the problem with three different cases based on the magnitude of the frequency.

\subsection{Estimate for the case with small flux}
In this subsection, the solutions of the problem \eqref{stream} and \eqref{streamBC} are estimated in terms of $\BF$ when the flux $\Phi$ is not large.
\begin{pro}\label{smallflux}
Let $\psi_n$ be the solution obtained in Proposition \ref{existence-stream}, the corresponding velocity field $\Bv_n$ satisfies
\be\nonumber
\|\Bv_n\|_{H^2(\Omega)} \leq C (1 + \Phi^2) \|\BF_n\|_{L^2(\Omega)}.
\ee
\end{pro}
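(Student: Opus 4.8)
The plan is to regard $\Bv_n$ as the solution of a Stokes system whose right-hand side is $\BF_n$ together with the two convection terms, and to supply a lower order estimate in which $\BF$ enters only through its $L^2$ norm. Rewriting the $n$-th mode of \eqref{model12} as $-\Delta\Bv_n+\nabla P_n=\BF_n-\boldsymbol{U}\cdot\nabla\Bv_n-\Bv_n\cdot\nabla\boldsymbol{U}$ with $\div\Bv_n=0$ and the no-slip boundary conditions, the standard $H^2$ regularity for the Stokes system (\cite{Galdi}), which for a single mode reduces to a resolvent estimate for the $y$-ODE with large parameter $\hat{n}^2$ and is therefore uniform in $n$ and in $L\le L_0$, gives
\[
\|\Bv_n\|_{H^2(\Omega)}\le C\|\BF_n\|_{L^2(\Omega)}+C\|\boldsymbol{U}\cdot\nabla\Bv_n\|_{L^2(\Omega)}+C\|\Bv_n\cdot\nabla\boldsymbol{U}\|_{L^2(\Omega)}.
\]
Since $\boldsymbol{U}\cdot\nabla\Bv_n=i\hat{n}U\Bv_n$, $\Bv_n\cdot\nabla\boldsymbol{U}=U'v_{2,n}\Be_1$ and $\|U\|_{L^\infty}+\|U'\|_{L^\infty}\le C\Phi$, the two convection terms are bounded by $C\Phi|\hat{n}|\,\|\Bv_n\|_{L^2(\Omega)}$ and $C\Phi\|v_{2,n}\|_{L^2(\Omega)}$, so everything reduces to the lower order bound $|\hat{n}|\,\|\Bv_n\|_{L^2(\Omega)}+\|v_{2,n}\|_{L^2(\Omega)}\le C(1+\Phi)\|\BF_n\|_{L^2(\Omega)}$. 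The essential point is that this must be proved with $\BF_n$ measured only in $L^2$: the bounds of Lemmas \ref{basic-est1}--\ref{basic-est2} and Proposition \ref{existence-stream} all involve $\|f_n\|_{L^2}$, and since $f_n=i\hat{n}F_{2,n}-F_{1,n}'$ this would cost a derivative, i.e. the stronger norm $\|\BF\|_{H^1}$ of Proposition \ref{back}.

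To avoid this derivative loss I would revisit the energy identities \eqref{2-2-4} and \eqref{2-2-6}, but bound the forcing term through the integration-by-parts identity
\[
\inte f_n\overline{\psi_n}\,dy=-\inte\left(F_{1,n}\overline{v_{1,n}}+F_{2,n}\overline{v_{2,n}}\right)\,dy,
\]
valid because $v_{1,n}=-\psi_n'$, $v_{2,n}=i\hat{n}\psi_n$ and $\psi_n(\pm1)=0$; this transfers the derivative from $F_{1,n}$ onto $\psi_n$ and gives $|\inte f_n\overline{\psi_n}\,dy|\le\|\BF_n\|_{L^2(I)}\|\Bv_n\|_{L^2(I)}$ with $\|\Bv_n\|_{L^2(I)}^2=\inte(|\psi_n'|^2+\hat{n}^2|\psi_n|^2)\,dy$, using only $L^2$ of $\BF_n$. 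Set $X^2:=\inte(|\psi_n''|^2+\hat{n}^2|\psi_n'|^2+\hat{n}^4|\psi_n|^2)\,dy$, so that $\|\Bv_n\|_{L^2(I)}\le\sqrt2\,X/|\hat{n}|$, $|\hat{n}|\,\|\Bv_n\|_{L^2(I)}\le X$, and $\|v_{2,n}\|_{L^2(I)}\le X/|\hat{n}|$. Then the imaginary part \eqref{2-2-6} combined with the Hardy-type inequality \eqref{2-2-7-0}, exactly as in the derivation of \eqref{2-2-7}, yields the flux gain
\[
\Phi|\hat{n}|\inte|\psi_n|^2\,dy\le C\left|\inte f_n\overline{\psi_n}\,dy\right|\le C\|\BF_n\|_{L^2(I)}\,\frac{X}{|\hat{n}|}.
\]

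Feeding this, together with $\|\psi_n'\|_{L^2(I)}\le X/|\hat{n}|$, into the real part \eqref{2-2-4} (whose only dangerous term is $\Im\inte\hat{n}U'\psi_n'\overline{\psi_n}\,dy$, estimated by $C\Phi|\hat{n}|\,\|\psi_n'\|_{L^2(I)}\|\psi_n\|_{L^2(I)}$) produces $X^2\le C\|\BF_n\|_{L^2(I)}X/|\hat{n}|+C(\Phi/|\hat{n}|)^{1/2}\|\BF_n\|_{L^2(I)}^{1/2}X^{3/2}$. Dividing by $X$, applying Young's inequality, and using $|\hat{n}|\ge 1/L_0$ closes the argument and gives $X\le C(1+\Phi)\|\BF_n\|_{L^2(I)}$. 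Since for a single mode $\|\cdot\|_{L^2(\Omega)}=\sqrt{2\pi L}\,\|\cdot\|_{L^2(I)}$, the bounds $|\hat{n}|\,\|\Bv_n\|_{L^2(I)}\le X$ and $\|v_{2,n}\|_{L^2(I)}\le L_0 X$ transfer directly to $\Omega$ and establish the required lower order estimate.

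Combining the two steps gives $\|\Bv_n\|_{H^2(\Omega)}\le C\|\BF_n\|_{L^2(\Omega)}+C\Phi(1+\Phi)\|\BF_n\|_{L^2(\Omega)}\le C(1+\Phi^2)\|\BF_n\|_{L^2(\Omega)}$. The mode $n=0$ is treated separately and is trivial: $\div\Bv_0=0$ with $v_{2,0}(\pm1)=0$ forces $v_{2,0}\equiv0$, the convection terms vanish, the mode-zero pressure term drops out by periodicity, and $v_{1,0}''=-F_{1,0}$ yields $\|\Bv_0\|_{H^2(\Omega)}\le C\|\BF_0\|_{L^2(\Omega)}$. The main obstacle is exactly this lower order estimate with the force kept in $L^2$: the derivative-shifting identity for $\inte f_n\overline{\psi_n}\,dy$ is what makes it possible, while the fact that squaring the imaginary-part gain does \emph{not} reproduce the $|\Phi\hat{n}|^2$-weighted bound used in Lemma \ref{basic-est1} is what forces the argument to be run through $X$ and closed by Young's inequality, thereby introducing the $(1+\Phi)$ loss and hence the final factor $(1+\Phi^2)$.
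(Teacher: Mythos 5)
Your proposal is correct and is essentially the paper's own argument: the paper's proof of this proposition likewise transfers the derivative off $F_{1,n}$ by integration by parts (its estimate \eqref{5-1-1}), combines the resulting imaginary-part flux gain from the Hardy-type inequality with the real-part identity \eqref{2-2-4}, closes by Young's inequality to get the gradient-level bound with constant $C(1+\Phi)$, and then obtains $H^2$ via Stokes regularity \eqref{est-stokes} with the convection terms placed on the right-hand side. The only blemish is a harmless exponent slip in your intermediate display --- the dangerous term is in fact bounded by $C\Phi^{1/2}|\hat n|^{-1}\|\BF_n\|_{L^2(I)}^{1/2}X^{3/2}$ rather than $C(\Phi/|\hat n|)^{1/2}\|\BF_n\|_{L^2(I)}^{1/2}X^{3/2}$ --- which closes by Young's inequality and $|\hat n|\ge 1/L_0$ in exactly the same way and does not affect the conclusion.
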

\begin{proof}
It follows from \eqref{2-2-7} and integration by parts that
\begin{equation}\label{5-1-1}
\ba
|\Phi\hat{n}|\inte |\psi_n|^2\,dy\le&~~C\left|\inte i\hat{n} F_{2,n}\overline{\psi_n}-F_{1,n}\overline{\psi_n'}\,dy\right|\\
\le&~~C\left(\inte |\BF_{n}|^2\,dy\right)^\frac12\left(\inte  \hat{n}^2|\psi_n|^2 +\left|\psi'_n\right|^2\,dy\right)^\frac12.
\ea\end{equation}
Similarly, the estimate \eqref{2-2-4} yields
\begin{equation}\label{5-1-2}\ba
&~~\inte\left|\psi_n''\right|^2+2 \hat{n}^2\left|\psi_n'\right|^2+ \hat{n}^4 |\psi_n|^2\,dy\\
\le&~~C\left(\inte |\BF_{n}|^2\,dy\right)^\frac12\left(\inte  \hat{n}^2|\psi_n|^2 +\left|\psi'_n\right|^2\,dy\right)^\frac12+\frac32|\Phi\hat{n}|\inte \left|\psi'_n\right|\left|\psi_n\right|\,dy.
\ea\end{equation}
According to \eqref{5-1-1}, one has
\begin{equation}\label{5-1-3}\ba
&~~\frac32|\Phi\hat{n}|\inte \left|\psi_n'\right|\left|\psi_n\right|\,dy\leq C\Phi^\frac12\left(|\hat{n}|\inte \left|\psi_n'\right|^2 \,dy\right)^\frac12\left(|\Phi\hat{n}|\inte \left|\psi_n\right|^2 \,dy\right)^\frac12\\
\leq~~&C\Phi^\frac12\left(\inte |\BF_{n}|^2\,dy\right)^\frac14\left(|\hat{n}|\inte \left|\psi_n'\right|^2 \,dy\right)^\frac12\left(\inte  \hat{n}^2|\psi_n|^2 +\left|\psi'_n\right|^2\,dy\right)^\frac14.
\ea\end{equation}
Therefore, it follows from \eqref{5-1-2}-\eqref{5-1-3}, Young's inequality and Lemma \ref{lemmaA1} that
\begin{equation}\nonumber
\inte\left|\psi_n''\right|^2+ \hat{n}^2\left|\psi_n'\right|^2+ \hat{n}^4 |\psi_n|^2\,dy\leq C(1+\Phi^2)\inte |\BF_{n}|^2\,dy.
\end{equation}
This implies
\begin{equation}\nonumber
\|\nabla\Bv_n\|_{L^2(\Omega)}\leq C(1+\Phi)\|\BF_n\|_{L^2(\Omega)}.
\end{equation}
By Poinc\'{a}re inequality, one has
\begin{equation}\nonumber
\|\Bv_n\|_{H^1(\Omega)}\leq C(1+\Phi)\|\BF_n\|_{L^2(\Omega)}.
\end{equation}


Finally, as shown in Proposition \ref{back}, $\Bv_n$ satisfies the equation
\be \label{stokes}
\left\{  \ba
& -\Delta \Bv_n + \boldsymbol{U}\partial_x  \Bv_n + v_{2,n}e^{i\hat{n} x}  \boldsymbol{U}' + \nabla P_n = \BF_n, \ \ \ \mbox{in}\ \ \Omega, \\
& {\rm div}~\Bv_n = 0,\ \ \ \ \ \ \ \ \ \ \ \ \ \ \ \ \ \ \ \ \ \ \ \ \ \ \ \ \ \ \ \ \ \ \ \ \ \ \ \ \ \ \ \ \ \ \mbox{in} \ \ \Omega.
\ea  \right.
\ee
According to the regularity theory for Stokes equations (\cite[Lemma VI.1.2]{Galdi}), one has
\be \label{est-stokes}
 \ba
\|\Bv_n\|_{H^2 (\Omega)} & \leq C \|\BF_n\|_{L^2(\Omega)} +C\Phi \|\partial_x \Bv_n\|_{L^2(\Omega)} + C\Phi \|y v_{2,n}e^{i\hat{n} x}\|_{L^2(\Omega)} + C \|\Bv_n\|_{H^1(\Omega)}\\
& \leq C (1 + \Phi^2 ) \|\BF_n\|_{L^2(\Omega)} .
\ea
\ee
This finishes the proof of the proposition.
\end{proof}

\subsection{Estimate for the $0$-th mode}
To establish the uniform estimate when the flux $\Phi$ is large,  the $0$-th mode solution is investigated in detail in this subsection.
\begin{pro}\label{0mode}
Let $\psi_0$ be the solution obtained in Proposition \ref{existence-stream}. Then the corresponding velocity field $\Bv_0=-\psi_0'\Be_1$ satisfies
\begin{equation}\nonumber
\|\Bv_0\|_{H^2(\Omega)}\le C_1\|F_{1,0}\|_{L^2(\Omega)},
\end{equation}
where $C_1>0$ is a uniform constant independent of flux $\Phi$, $F_{1,0}$ and $L$.
\end{pro}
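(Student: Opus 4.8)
The plan is to exploit the fact that the zeroth mode completely decouples from the flux. The first step is to write down the reduced equation. The divergence-free relation $v_{2,0}'+i\hat 0\,v_{1,0}=0$ together with $v_{2,0}(\pm1)=0$ forces $v_{2,0}\equiv0$, and since $\hat 0=0$ annihilates the transport term $\boldsymbol{U}\cdot\nabla$, every term carrying the Poiseuille profile $U=\tfrac34\Phi(1-y^2)$ in \eqref{model12} disappears at $n=0$. Moreover $P_{1,0}'=i\hat 0\,P_{2,0}=0$, so the zeroth pressure mode is a constant $\lambda$. Writing $v_{1,0}=-\psi_0'$, the zeroth mode therefore solves the scalar two-point boundary value problem
\[
v_{1,0}''=\lambda-F_{1,0},\qquad v_{1,0}(\pm1)=0,\qquad \inte v_{1,0}\,dy=0,
\]
which is exactly what one obtains by integrating the $n=0$ stream equation $\psi_0^{(4)}=F_{1,0}'$ once, with $\lambda$ the integration constant. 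This is the source of the flux independence: $\Phi$ has dropped out entirely.

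Second, I would run the basic energy estimate. Multiplying the reduced equation by $\overline{v_{1,0}}$ and integrating by parts over $(-1,1)$, the boundary terms vanish since $v_{1,0}(\pm1)=0$, and the constant term drops because the flux constraint gives $\inte v_{1,0}\,dy=0$. This yields $\inte|v_{1,0}'|^2\,dy=\Re\inte F_{1,0}\overline{v_{1,0}}\,dy\le\|F_{1,0}\|_{L^2}\|v_{1,0}\|_{L^2}$. Combined with the Poincar\'e inequality $\|v_{1,0}\|_{L^2}\le C\|v_{1,0}'\|_{L^2}$ (valid because $v_{1,0}$ vanishes at the endpoints), this controls $\|v_{1,0}\|_{L^2}+\|v_{1,0}'\|_{L^2}\le C\|F_{1,0}\|_{L^2}$.

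Third, I need a uniform bound on the integration constant $\lambda$, which I expect to be the one genuinely delicate point of the argument, since $\lambda$ is not directly prescribed but determined implicitly by the flux constraint. The trick is to test the reduced equation against $\phi(y)=1-y^2$, which vanishes at $\pm1$: integrating by parts twice and using $\inte v_{1,0}\,dy=0$ makes the left-hand side vanish, leaving $\lambda\inte(1-y^2)\,dy=\inte F_{1,0}(1-y^2)\,dy$, hence $\lambda=\tfrac34\inte F_{1,0}(1-y^2)\,dy$ and $|\lambda|\le C\|F_{1,0}\|_{L^2}$ by Cauchy--Schwarz. Then $\|v_{1,0}''\|_{L^2}=\|\lambda-F_{1,0}\|_{L^2}\le C\|F_{1,0}\|_{L^2}$.

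Finally, I assemble the pieces. Since $\Bv_0=v_{1,0}(y)\Be_1$ is independent of $x$, every $x$-derivative vanishes and $\|\Bv_0\|_{H^2(\Omega)}^2=2\pi L\inte|v_{1,0}|^2+|v_{1,0}'|^2+|v_{1,0}''|^2\,dy$, whereas $\|F_{1,0}\|_{L^2(\Omega)}^2=2\pi L\inte|F_{1,0}|^2\,dy$. The common factor $2\pi L$ cancels, so the three estimates above combine to give $\|\Bv_0\|_{H^2(\Omega)}\le C_1\|F_{1,0}\|_{L^2(\Omega)}$ with $C_1$ independent of $\Phi$, $L$, and $F_{1,0}$, as claimed.
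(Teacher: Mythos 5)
Your proof is correct, and it takes a genuinely different route from the paper's. The paper works directly with the stream-function problem $\psi_0^{(4)}=F_{1,0}'$, $\psi_0(\pm1)=\psi_0'(\pm1)=0$: it writes down the explicit solution as a triple iterated integral of $F_{1,0}$ plus the cubic polynomial $\frac{A_1}{6}(y+1)^3+\frac{A_2}{2}(y+1)^2$, with $A_1,A_2$ given explicitly as iterated integrals of $F_{1,0}$, and then bounds $\psi_0',\psi_0'',\psi_0^{(3)}$ pointwise-in-formula by H\"older's inequality. You instead work with the velocity formulation: the once-integrated equation $v_{1,0}''=\lambda-F_{1,0}$ with an unknown pressure constant $\lambda$, an energy estimate (testing against $\overline{v_{1,0}}$, where the flux constraint kills the $\lambda$-term) for the lower-order norms, and then the test function $1-y^2$ to identify $\lambda=\tfrac34\inte F_{1,0}(1-y^2)\,dy$, after which the equation itself gives the $L^2$ bound on $v_{1,0}''$. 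Both arguments rest on the same structural observation --- the zeroth mode decouples completely from $\Phi$, since $v_{2,0}\equiv 0$ and $\hat 0=0$ annihilate every term carrying $U$ --- and this is what makes the constant uniform in the flux; your $\lambda$ is, up to sign, the paper's integration constant $A_1$. What the explicit formula buys is brevity and completely explicit constants; what your energy argument buys is robustness: it never requires a closed-form solution, and the duality trick for pinning down $\lambda$ would survive lower-order perturbations of the operator for which no representation formula is available.
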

\begin{proof} For $n=0$, the problem \eqref{stream} and \eqref{streamBC} becomes
\be \label{5-2-1}
\left\{\begin{aligned}
&\psi_0^{(4)}=F_{1,0}',\\
&\psi_0(\pm1)=\psi_0'(\pm1)=0.
\end{aligned}\right.
\ee

The solution of the problem \eqref{5-2-1} can be written as
\begin{equation}\nonumber
\psi_{0}(y)=\int_{-1}^y\int_{-1}^\tau\int_{-1}^tF_{1,0}(s)\,dsdtd\tau+\frac{A_1}{6}(y+1)^3+\frac{A_2}{2}(y+1)^2,
\end{equation}
where
\begin{equation}\nonumber
A_1=\frac32\int_{-1}^1\int_{-1}^\tau\int_{-1}^tF_{1,0}(s)\,dsdtd\tau-\frac32\int_{-1}^1\int_{-1}^tF_{1,0}(s)\,dsdt
\end{equation}
and
\begin{equation}\nonumber
A_2=\int_{-1}^1\int_{-1}^tF_{1,0}(s)\,dsdt-\frac32\int_{-1}^1\int_{-1}^\tau\int_{-1}^tF_{1,0}(s)\,dsdtd\tau.
\end{equation}
Noting $\Bv_0=-\psi_0'(y)\Be_1$ and using H\"{o}lder inequality yield
\begin{equation}\nonumber
\|\Bv_0\|_{H^2(\Omega)}^2 \leq CL\inte \left|\psi_0^{(3)}\right|^2+|\psi_0''|^2+|\psi_0'|^2\,dy \leq CL\inte |F_{1,0}|^2\,dy\leq C\|F_{1,0}\Be_1\|_{L^2(\Omega)}^2.
\end{equation}
This finishes the proof of Proposition \ref{0mode}.
\end{proof}

\subsection{Uniform estimate for the case with large flux and intermediate frequency}\label{secinter}
In this subsection, the uniform estimate for the solution of \eqref{stream} and \eqref{streamBC} with respect to the flux $\Phi$ is established when the flux is large and the frequency is in the intermediate regime. Inspired by \cite{GHM, WX1},  the solutions are decomposed into several parts. The first part is the solution of \eqref{stream} supplemented with the Navier slip boundary conditions and the second part is the associated boundary layer. The other parts are used to recover the equation and the no-slip boundary condition.

\begin{pro}\label{mediumstream} Assume that $\Phi \gg 1$.
There exists a small constant $\epsilon_1 \in (0, 1)$, which is independent of $\Phi$, such that as long as $1\leq |n|\leq \epsilon_1 L\sqrt{\Phi} $,
the solution $\psi_n(y)$ to the problem \eqref{stream} and \eqref{streamBC} can be decomposed into five parts,
\be \label{decompose}\ba
\psi_n(y) = &\psi_{n,s}(y) +  b_n^o\left[\psi_{n,BL}^o(y) +\psi_{n,e}^o(y) \right]+b_n^e\left[\psi_{n,BL}^e(y) +\psi_{n,e}^e(y) \right] \\
& + a_n^o[\psi_{n,p}^o(y)+\psi_{n,r}^o(y)]+a_n^e[\psi_{n,p}^e(y)+\psi_{n,r}^e(y)].
\ea\ee
Here $(1)$\ $\psi_{n,s}$ is a solution to the following problem with slip boundary conditions
\be \label{slip}
\left\{\ba
&-i\hat{n}U''(y)\psi_{n,s}+ i\hat{n}U(y)\left(\frac{d^2}{d y^2}-\hat{n}^2\right)\psi_{n,s}-\left(\frac{d^2}{d y^2}-\hat{n}^2\right)^2\psi_{n,s}= i\hat{n} F_{2, n}- \frac{d}{dy} F_{1, n} ,\\
&\psi_{n,s}(\pm1)=\psi_{n,s}''(\pm1)=0.
\ea\right.\ee
Moreover, $\psi_{n,s}$ satisfies
\be \label{est5-3}
 \inte  | \psi_{n,s}' |^2
+\hat{n}^2| \psi_{n,s} |^2 \, dy \leq C |\Phi\hat{n}|^{-\frac43} \inte |\BF_n|^2 \, dy,
\ee
\be \label{est5-4}
\inte |\psi_{n,s}''|^2  +  \hat{n}^2|\psi_{n,s}' |^2  + \hat{n}^4|\psi_{n,s}|^2
 \leq  C |\Phi\hat{n}|^{-\frac23}  \inte |\BF_n|^2 \, dy,
\ee
and
\be \label{est5-5}
\inte \left|  \psi_{n,s}^{(3)}\right|^2 +
\hat{n}^2|\psi_{n,s}'' |^2  + \hat{n}^4|\psi_{n,s}'|^2 +\hat{n}^6|\psi_{n,s} |^2 \, dy
\leq  C\inte |\BF_n|^2 \, dy.
\ee

$(2)$ $\psi_{n,BL}^e$ and $\psi_{n,BL}^o$ are the boundary layer functions
\be \nonumber
\psi_{n,BL}^e=\chi^+(y)\psi_{n,BL}^+(y)+\chi^-(y)\psi_{n,BL}^-(y)
\ \
\text{and}
\ \
\psi_{n,BL}^o=\chi^+(y)\psi_{n,BL}^+(y)-\chi^-(y)\psi_{n,BL}^-(y).
\ee
Here
\be \nonumber
\psi_{n,BL}^\pm(y)=C_{0,n,\Phi}G_{n,\Phi}(\beta(1\mp y))
\ee
and $G_{n,\Phi}(\rho)$ is a smooth function, decaying exponentially at infinity, uniformly bounded in the set
\be\nonumber
\mathcal{E}=\{(n,\Phi,\rho):\Phi\ge 1,~1\le |n|\leq L_0\sqrt{\Phi},~0\le \rho<\infty\}.
\ee
$\chi^+(y)=\chi^-(-y)$ are smooth cut-off functions on $[-1,1]$ satisfying that
\be \label{5-3-5}
\chi^+(y) = \left\{ \ba  &  1, \ \ \ \ y\geq \frac12,  \\ & 0, \ \ \  \ y\leq \frac14.   \ea  \right.
\ee
$\psi_{n,p}^e$ and $\psi_{n,p}^o$ are irrotational flows defined by
\be\nonumber
\psi_{n,p}^e=e^{\hat{n}y}+e^{-\hat{n}y}
\quad
\text{and}
\quad
\psi_{n,p}^o=e^{\hat{n}y}-e^{-\hat{n}y}.
\ee
The coefficients $a_n^e$, $a_n^o$, $b_n^e$ and $b_n^o$ satisfy
\be \nonumber
|b_n^e|+|b_n^o|\leq C|\Phi\hat{n}|^{-\frac56}\left(\inte |\BF_n|^2\,dy\right)^\frac12
\ee
and
\be\nonumber
|a_n^e|+|a_n^o|\leq C|\Phi\hat{n}|^{-\frac56}e^{-|\hat{n}|}\left(\inte |\BF_n|^2\,dy\right)^\frac12.
\ee

$(3)$ $\psi_{n,BL}^e+\psi_{n,e}^e$, $\psi_{n,BL}^o+\psi_{n,e}^o$, $\psi_{n,p}^e+\psi_{n,r}^e$, and  $\psi_{n,p}^o+\psi_{n,r}^o$ satisfy the equation \eqref{stream} with $f_n=0$ supplemented the boundary conditions
\be \nonumber
\psi_{n,e}^e(\pm1)=(\psi_{n,e}^e)''(\pm1)=\psi_{n,r}^e(\pm1)=(\psi_{n,r}^e)''(\pm1)=0
\ee
and
\be \nonumber
\psi_{n,e}^o(\pm1)=(\psi_{n,e}^o)''(\pm1)=\psi_{n,r}^o(\pm1)=(\psi_{n,r}^o)''(\pm1)=0.
\ee

In conclusion, $\psi_n$ satisfies
\be \label{est5-6}
\inte | \psi_n'|^2 + \hat{n}^2|\psi_n|^2 \,dy
\leq  C |\Phi\hat{n}|^{-\frac43}\inte |\BF_n|^2 \, dy
\ee
and
\be \label{est5-7}
\inte |\psi_n^{(3)}|^2 + \hat{n}^2|\psi_n''|^2 +\hat{n}^4| \psi_n' |^2 +\hat{n}^6|\psi_n|^2 \,dy
\leq C \inte |\BF_n|^2 \, dy.
\ee
\end{pro}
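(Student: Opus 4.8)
The plan is to realize the five-term decomposition \eqref{decompose} by building each ingredient separately and then choosing the four coefficients $a_n^{e/o},b_n^{e/o}$ so that the no-slip conditions \eqref{streamBC} are met; the even/odd superscripts exploit the reflection symmetry $y\mapsto-y$ inherited from the even profile $U(y)=\frac34\Phi(1-y^2)$, which will decouple the wall matching into two independent $2\times2$ systems. Throughout, the governing small parameter is the Airy scale $\beta:=|\Phi\hat{n}|^{1/3}$, and the intermediate-frequency restriction $1\le|n|\le\epsilon_1 L\sqrt\Phi$, i.e. $\hat{n}^2\le\epsilon_1^2\Phi$, is precisely what forces the scale separation $\hat{n}\ll\beta$ (together with $\beta^{-1}\ll 1$) that makes the whole construction valid.

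First I would solve and estimate the slip problem \eqref{slip}. Because the conditions $\psi(\pm1)=\psi''(\pm1)=0$ let the biharmonic part and the multiplication by $U$ integrate by parts cleanly, the energy identities behind Lemma \ref{basic-est1} can be sharpened. The gain comes from the fact that $U$ vanishes linearly at $y=\pm1$, so near the walls the convection $i\hat{n}U$ balances $\partial_y^4$ on the scale $\beta^{-1}$; testing against $\overline{\psi_{n,s}}$ and $-\overline{\psi_{n,s}''}$ and localizing to the layer of width $\beta^{-1}$, where $U\sim(1\mp y)$, produces the negative powers $|\Phi\hat{n}|^{-4/3}$, $|\Phi\hat{n}|^{-2/3}$, and $O(1)$ recorded in \eqref{est5-3}--\eqref{est5-5}. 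Here $\hat{n}\ll\beta$ is used to discard the $\hat{n}^2$ contributions inside the layer.

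Next I would construct the homogeneous correctors. Rescaling $z=1\mp y$ by $\rho=\beta z$ reduces the leading part of \eqref{stream} with $f_n=0$ to an Airy-type equation whose decaying solution defines $G_{n,\Phi}(\rho)$; its uniform bounds and exponential decay on $\mathcal{E}$ follow from the standard asymptotics of the Airy function together with a perturbation argument absorbing the $\hat{n}^2$ and $U''$ terms, legitimate since $\hat{n}\ll\beta$. After cutting off by $\chi^\pm$ and forming $\psi_{n,BL}^{e/o}$, the leftover residual is supported where $\chi^\pm$ varies; solving the homogeneous slip problem with this residual as data gives the corrector $\psi_{n,e}^{e/o}$ so that $\psi_{n,BL}^{e/o}+\psi_{n,e}^{e/o}$ is an exact homogeneous solution with slip data. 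The irrotational modes $\psi_{n,p}^{e/o}=e^{\hat{n}y}\pm e^{-\hat{n}y}$ satisfy $(\partial_y^2-\hat{n}^2)\psi_{n,p}=0$, so \eqref{stream} leaves the residual $-i\hat{n}U''\psi_{n,p}$, which the corrector $\psi_{n,r}^{e/o}$ removes through the same slip solvability; in all cases the slip estimates bound the correctors by their residuals.

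Finally I would fix the coefficients. Since $\psi_{n,s}(\pm1)=0$ and all correctors vanish at the walls, the value conditions reduce to $b_n\psi_{n,BL}(\pm1)+a_n\psi_{n,p}(\pm1)=0$, while the derivative conditions read $\psi_{n,s}'(\pm1)+b_n\psi_{n,BL}'(\pm1)+a_n\psi_{n,p}'(\pm1)+(\text{corrector traces})=0$; projecting the boundary traces of $\psi_{n,s}$ onto even and odd parts decouples these into two $2\times2$ systems whose leading matrices have entries $\psi_{n,p}(\pm1)\sim e^{|\hat{n}|}$, $\psi_{n,BL}'(\pm1)\sim\beta G_{n,\Phi}'(0)$ and $\psi_{n,BL}(\pm1)\sim G_{n,\Phi}(0)$. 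Solving forces $b_n\sim\psi_{n,s}'(\pm1)/\beta$ and $a_n\sim b_n e^{-|\hat{n}|}$, and the trace bound $|\psi_{n,s}'(\pm1)|\lesssim\|\psi_{n,s}'\|_{L^2}^{1/2}\|\psi_{n,s}''\|_{L^2}^{1/2}\lesssim|\Phi\hat{n}|^{-1/2}\|\BF_n\|_{L^2}$ obtained from \eqref{est5-3}--\eqref{est5-4} yields the claimed bounds on $a_n^{e/o},b_n^{e/o}$. The main obstacle is showing these $2\times2$ systems are uniformly non-degenerate on $\mathcal{E}$, which reduces to controlling the Airy traces $G_{n,\Phi}(0)$ and $G_{n,\Phi}'(0)$ uniformly in the parameters and checking that $\beta G_{n,\Phi}'(0)$ genuinely dominates the off-diagonal term $\hat{n}e^{|\hat{n}|}G_{n,\Phi}(0)$ thanks to $\beta\gg\hat{n}$; everything downstream rests on this invertibility. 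Once the coefficients are controlled, \eqref{est5-6}--\eqref{est5-7} follow by summation: the slip part contributes $|\Phi\hat{n}|^{-4/3}\|\BF_n\|^2$ to \eqref{est5-6} and $O(1)\|\BF_n\|^2$ to \eqref{est5-7}, and the rescaling identities $\int|\partial_y^k\psi_{n,BL}|^2\sim b_n^2\beta^{2k-1}$ show the boundary layer matches these orders while the irrotational contribution is exponentially smaller.
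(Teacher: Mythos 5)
Your proposal follows essentially the same route as the paper's proof: sharpened energy estimates for the slip problem exploiting the degeneracy of $U$ at the walls (the paper's Lemma \ref{slip-est}), an Airy-type boundary layer at scale $\beta^{-1}$ cut off and corrected by a slip-problem solution $\psi_{n,e}$, irrotational modes corrected by $\psi_{n,r}$, even/odd decoupling, and a $2\times 2$ matching system whose invertibility rests on the lower bound for the boundary-layer derivative trace $|(\psi_{n,BL}^+)'(1)|\gtrsim\beta$ dominating the $O(|\hat{n}|+|\Phi\hat{n}|^{\frac14})$ and $O(|\Phi\hat{n}|^{\frac{1}{12}})$ remainder traces, which is exactly the content of \eqref{5-3-79}--\eqref{5-3-87} via Lemma \ref{airy-est}. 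Your quantitative bookkeeping (the trace bound $|\psi_{n,s}'(\pm1)|\lesssim|\Phi\hat{n}|^{-\frac12}\|\BF_n\|_{L^2}$, $|b_n|\lesssim|\Phi\hat{n}|^{-\frac56}$, $a_n\sim b_n e^{-|\hat{n}|}$, the scale separation $|\hat{n}|\ll\beta$ from $|\hat n|\le\epsilon_1\Phi^{\frac12}$, and $\int_{-1}^1|\partial_y^k\psi_{n,BL}|^2\,dy\sim b_n^2\beta^{2k-1}$) agrees with the paper's.
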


The rest of this subsection devotes to the proof of Proposition \ref{mediumstream}. First, one has the following a priori estimates for the problem \eqref{slip}.

\begin{lemma}\label{slip-est}
Assume that $L \leq L_0$, and $\psi_{n,s}$ is a smooth solution to the problem \eqref{slip}. Then there exists a constant $C$ independent of $n$ and $f_n$ such that
\begin{equation}\nonumber
\inte |\psi_{n,s}|^2\,dy \le C|\Phi\hat{n}|^{-2}\inte |f_n|^2\,dy,
\end{equation}
\begin{equation}\nonumber
\inte \hat{n}^2|\psi_{n,s}|^2+|\psi_{n,s}'|^2\,dy \le C|\Phi\hat{n}|^{-\frac53}\inte |f_n|^2\,dy,
\end{equation}
\begin{equation}\nonumber
\int_{-1}^1 \hat{n}^4|\psi_{n,s}|^2+\hat{n}^2|\psi_{n,s}'|^2+|\psi_{n,s}''|^2\,d y\leq C|\Phi\hat{n}|^{-\frac43}\int_{-1}^1 |f_n|^2\,dy,
\end{equation}
and
\begin{equation}\nonumber
\inte \left|\psi_{n,s}^{(3)}\right|^2+\hat{n}^2|\psi_{n,s}''|^2+\hat{n}^4|\psi_{n,s}'|^2+\hat{n}^6|\psi_{n,s}|^2\,dy \le C|\Phi\hat{n}|^{-\frac23}\inte |f_n|^2\,dy.
\end{equation}
Moreover, $\psi_{n, s}$ satisfies the estimates \eqref{est5-3}-\eqref{est5-5} and
\begin{equation}\nonumber
 \left|\psi_{n,s}'(\pm1)\right|\le C|\Phi\hat{n}|^{-\frac12}\left(\inte |\BF_n|^2\,dy\right)^\frac12.
\end{equation}
\end{lemma}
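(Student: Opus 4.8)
The plan is to pass to the vorticity formulation $\omega_{n,s}:=\psi_{n,s}''-\hat n^2\psi_{n,s}$, which under the slip conditions satisfies $\psi_{n,s}(\pm1)=\omega_{n,s}(\pm1)=0$. This double vanishing is exactly what makes the multiplier $\overline{\omega_{n,s}}$ admissible with no boundary terms, and it recasts \eqref{slip} as
\[
-\omega_{n,s}''+\hat n^2\omega_{n,s}+i\hat n U\,\omega_{n,s}=f_n+i\hat n U''\psi_{n,s},\qquad \omega_{n,s}=\psi_{n,s}''-\hat n^2\psi_{n,s},
\]
where the nonlocal coupling is benign because $U''=-\tfrac32\Phi$ is constant; in particular $\int_{-1}^1 U''\psi_{n,s}\overline{\omega_{n,s}}\,dy=\tfrac32\Phi\int_{-1}^1(|\psi_{n,s}'|^2+\hat n^2|\psi_{n,s}|^2)\,dy$ is real and nonnegative. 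First I would recycle the $\overline{\psi_{n,s}}$ energy identities: the integrations by parts leading to \eqref{2-2-4}--\eqref{2-2-6} have boundary terms that also vanish under the slip conditions (using $\psi_{n,s}(\pm1)=\omega_{n,s}(\pm1)=0$ for the biharmonic term), so those identities persist verbatim, and combined with Lemma \ref{lemmaHLP} they yield the anchor bound $\int_{-1}^1|\psi_{n,s}|^2\,dy\le C|\Phi\hat n|^{-2}\int_{-1}^1|f_n|^2\,dy$ exactly as in \eqref{2-2-8}. This is the first stated estimate.

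The heart of the matter is the Airy-type gain $\int_{-1}^1|\omega_{n,s}|^2\,dy\le C|\Phi\hat n|^{-4/3}\int_{-1}^1|f_n|^2\,dy$, which I would prove by a boundary-layer splitting rather than by explicit Airy functions. Set $\delta=(\Phi\hat n)^{-1/3}$ and split $[-1,1]$ into the interior $\{1-y^2\ge\delta\}$, where $U\ge c\Phi\delta$, and the two layers of width $\delta$ at $y=\pm1$. In the interior I bound $\int|\omega_{n,s}|^2\le(c\Phi\delta)^{-1}\int U|\omega_{n,s}|^2$ and control $\int U|\omega_{n,s}|^2$ through the imaginary part of the $\overline{\omega_{n,s}}$ identity; in the layers I use $\omega_{n,s}(\pm1)=0$ and a Poincar\'e estimate to get $\int_{\rm layer}|\omega_{n,s}|^2\le C\delta^2\int|\omega_{n,s}'|^2$, and control $\int|\omega_{n,s}'|^2$ by the real part $\int(|\omega_{n,s}'|^2+\hat n^2|\omega_{n,s}|^2)=\Re\int f_n\overline{\omega_{n,s}}$. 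The coupling term is absorbed using the interpolation $\int(|\psi_{n,s}'|^2+\hat n^2|\psi_{n,s}|^2)\le C\|\psi_{n,s}\|_{L^2}\|\omega_{n,s}\|_{L^2}$, the anchor bound, and Young's inequality. The frequency restriction $1\le|n|\le\epsilon_1 L\sqrt\Phi$ enters precisely here: $|\hat n|\le\epsilon_1\sqrt\Phi$ guarantees $\hat n^2\lesssim\Phi$ (so the advection dominates the $\hat n^2$ term at scale $\delta$) and $\hat n\delta\lesssim1$. Since $\int|\omega_{n,s}|^2=\int(|\psi_{n,s}''|^2+2\hat n^2|\psi_{n,s}'|^2+\hat n^4|\psi_{n,s}|^2)$ by the slip conditions, this gives the third stated estimate; and the real part above, bounded via the third estimate by $\Re\int f_n\overline{\omega_{n,s}}\le\|f_n\|_{L^2}\|\omega_{n,s}\|_{L^2}\le C|\Phi\hat n|^{-2/3}\int|f_n|^2$, yields the fourth (top-order) estimate after rewriting through $\omega_{n,s}'=\psi_{n,s}'''-\hat n^2\psi_{n,s}'$.

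The remaining second estimate then follows by geometric interpolation between the first and third: $\hat n^2\int|\psi_{n,s}|^2=(\hat n^4\int|\psi_{n,s}|^2)^{1/2}(\int|\psi_{n,s}|^2)^{1/2}$ and $\int|\psi_{n,s}'|^2\le(\int|\psi_{n,s}|^2)^{1/2}(\int|\psi_{n,s}''|^2)^{1/2}$ (Lemma \ref{lemmaA1}). To upgrade these $\|f_n\|_{L^2}$-bounds to the $\|\BF_n\|_{L^2}$-bounds \eqref{est5-3}--\eqref{est5-5}, I would rerun the same identities but integrate the forcing pairings by parts, using $f_n=i\hat n F_{2,n}-F_{1,n}'$ and $\psi_{n,s}(\pm1)=\omega_{n,s}(\pm1)=0$, so that $\int f_n\overline{\psi_{n,s}}=\int(i\hat n F_{2,n}\overline{\psi_{n,s}}+F_{1,n}\overline{\psi_{n,s}'})$ and similarly against $\omega_{n,s}$. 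The top-order bound \eqref{est5-5} comes straight from the real part, while \eqref{est5-4} and \eqref{est5-3} close as a coupled system in $\|\psi_{n,s}\|_{L^2}$, $\int(\hat n^2|\psi_{n,s}|^2+|\psi_{n,s}'|^2)$ and $\|\omega_{n,s}\|_{L^2}$ via the same interior/boundary-layer splitting (now of $\psi_{n,s}$, using the weighted quantity from the imaginary part and an Agmon trace bound for the layer) together with Young's inequality.

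Finally, the boundary derivative bound follows from the Agmon/trace inequality $|\psi_{n,s}'(\pm1)|^2\le C\|\psi_{n,s}'\|_{L^2}\|\psi_{n,s}''\|_{L^2}+C\|\psi_{n,s}'\|_{L^2}^2$ combined with \eqref{est5-3}--\eqref{est5-4}. The hard part is the Airy gain together with the bookkeeping of the coupling and the layer contributions: showing that the degeneracy of $U$ at the walls costs only the sharp factor $(\Phi\hat n)^{-1/3}$ per derivative, and that the coupled system for the $\|\BF_n\|$-bounds genuinely closes. This is where the layer width $\delta=(\Phi\hat n)^{-1/3}$ and the intermediate-frequency restriction are indispensable, since they are what make the advection dominate diffusion in the interior while keeping the layer thin enough for the Poincar\'e and absorption steps to succeed.
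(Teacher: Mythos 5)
Your proposal is correct, and its skeleton matches the paper's proof: anchor bound $\inte|\psi_{n,s}|^2\,dy\le C|\Phi\hat n|^{-2}\inte|f_n|^2\,dy$ from the $\overline{\psi_{n,s}}$ energy identities and Lemma \ref{lemmaHLP}; the $|\Phi\hat n|^{-4/3}$ gain at the level of the vorticity $\Psi_{n,s}=\psi_{n,s}''-\hat n^2\psi_{n,s}$, which is the paper's system \eqref{rslip} with the same $\tilde f_n=f_n+i\hat n U''\psi_{n,s}$ absorbed via the anchor bound; interpolation for the intermediate estimate; integration by parts on $f_n=i\hat n F_{2,n}-F_{1,n}'$ for \eqref{est5-3}--\eqref{est5-5}; and the trace inequality of Lemma \ref{lemmaA2} for $|\psi_{n,s}'(\pm1)|$. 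The one genuine difference is how the key gain is implemented. The paper multiplies the first equation of \eqref{rslip} by $\overline{\Psi_{n,s}}$ to get \eqref{5-3-14}, expands $\inte|\tilde f_n|^2\,dy$ exactly in \eqref{5-3-15}--\eqref{5-3-19}, and then invokes the weighted interpolation inequality of Lemma \ref{interpolation} (and later Lemma \ref{weightinequality} for the $\BF_n$-bounds). You instead perform a direct interior/boundary-layer splitting at scale $\delta=(\Phi|\hat n|)^{-1/3}$: on $\{1-y^2\ge\delta\}$ the weighted term from the imaginary part dominates, in the layers you use $\Psi_{n,s}(\pm1)=0$ and Poincar\'e against $\inte|\Psi_{n,s}'|^2\,dy$ from the real part, and optimizing $\delta$ gives $\|\Psi_{n,s}\|_{L^2}\le C|\Phi\hat n|^{-2/3}\|\tilde f_n\|_{L^2}$. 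This is mathematically the same mechanism --- the paper's Lemmas \ref{interpolation} and \ref{weightinequality} are themselves proved by exactly such splittings with optimized layer width --- but your version inlines it and makes the origin of the exponent $-1/3$ transparent, at the cost of re-deriving (for the $\BF_n$-bounds, via the Agmon-type sup estimate in the layer) what the paper gets by citing Lemma \ref{weightinequality}. Your top-order argument via the real part, using that $i\hat n U''\inte\psi_{n,s}\overline{\Psi_{n,s}}\,dy$ is purely imaginary, is also a clean shortcut compared with the paper's Step 3.

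One correction: you assert that the restriction $1\le|n|\le\epsilon_1 L\sqrt\Phi$ is indispensable for this lemma. It is not, and the lemma as stated assumes only $L\le L_0$. Nothing in your own splitting uses $\hat n^2\lesssim\Phi$: the interior step needs only $U\ge c\Phi\delta$ on $\{1-y^2\ge\delta\}$, the layer step needs only $\Psi_{n,s}(\pm1)=0$, and the coupling is absorbed by the anchor bound for every $n\neq0$. The intermediate-frequency restriction enters only later, in Proposition \ref{mediumstream}, where the explicit Airy boundary layers are constructed and one must bound $(\psi_{n,BL}^+)'(1)$ from below; importing it into Lemma \ref{slip-est} would make your statement needlessly weaker than the one being proved, even though it happens to be harmless for the application.
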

\begin{proof}
{\em Step 1. Basic  a priori estimate.} As in the proof of Lemma \ref{basic-est1}, multiplying the equation \eqref{slip} by $\overline{\psi_{n,s}}$ and integrating the resulting equation over $[-1,1]$ yield
\begin{equation}\label{5-3-9}
\int_{-1}^1 \hat{n}^4|\psi_{n,s}|^2+2\hat{n}^2|\psi_{n,s}'|^2+|\psi''_{n,s}|^2\,d y=-\Re\int_{-1}^1 f_n\overline{\psi_{n,s}}\,dy+\Im\int_{-1}^1 \hat{n}U'\psi_{n,s}'\overline{\psi_{n,s}}\,dy
\end{equation}
and
\begin{equation}\nonumber
\frac{3\Phi\hat{n}}{4}\int_{-1}^1 \hat{n}^2|\psi_{n,s}|^2(1-y^2)+|\psi_{n,s}'|^2(1-y^2)\,d y=-\Im\int_{-1}^1 f_n\overline{\psi_{n,s}}\,dy+\frac{3\Phi\hat{n}}{4}\int_{-1}^1|\psi_{n,s}|^2\, dy.
\end{equation}
Since $L\le L_0$, one has
\begin{equation}\label{5-3-11}
|\Phi\hat{n}|\int_{-1}^1 \hat{n}^2|\psi_{n,s}|^2(1-y^2)+|\psi_{n,s}'|^2(1-y^2)\,d y\le C\left|\int_{-1}^1 f_n\overline{\psi_{n,s}}\,dy\right|.
\end{equation}
It follows from Lemma \ref{lemmaHLP} that the estimate
\begin{equation}\nonumber
|\Phi\hat{n}|\int_{-1}^1|\psi_{n,s}|^2\,d y\le C\left|\int_{-1}^1 f_n\overline{\psi_{n,s}}\,dy\right|
\end{equation}
holds. Using Cauchy-Schwarz inequality gives
\begin{equation}\label{5-3-13}
|\Phi\hat{n}|^2\int_{-1}^1|\psi_{n,s}|^2\,d y\le C\int_{-1}^1|f_n|^2\,dy.
\end{equation}

{\em Step 2. The first and second order estimates.} Note that the problem \eqref{slip} can be written as
\begin{equation}\label{rslip}
\left\{\ba
&i\frac{3\Phi\hat{n}}{4}(1-y^2)\Psi_{n,s}-\left(\frac{d^2}{d y^2}-\hat{n}^2\right)\Psi_{n,s}=f_n+i\hat{n}U''(y)\psi_{n,s}=:\tilde{f}_n,\\
&\left(\frac{d^2}{d y^2}-\hat{n}^2\right)\psi_{n,s}=\Psi_{n,s},\\
&\psi_{n,s}(\pm1)=\Psi_{n,s}(\pm1)=0.
\ea\right.
\end{equation}
Multiplying  the first equation of \eqref{rslip} by $\overline{\Psi_{n,s}}$ and integrating the resulting equation over $[-1,1]$ gives
\begin{equation}\label{5-3-14}
\int_{-1}^1|\Psi_{n,s}'|^2+\hat{n}^2|\Psi_{n,s}|^2+\frac{3|\Phi\hat{n}|}{4}(1-y^2)|\Psi_{n,s}|^2\,dy \leq \left|\int_{-1}^1\tilde{f}_n\overline{\Psi_{n,s}}\,dy\right|.
\end{equation}
On the other hand, the straightforward computations yield
\begin{equation}\label{5-3-15}
\begin{aligned}
\int_{-1}^1\left|\tilde{f}_n\right|^2\,dy=&\int_{-1}^1\left|\left(\frac{d^2}{dy^2}-\hat{n}^2\right)\Psi_{n,s}\right|^2+\left|\frac{3\Phi\hat{n}}{4}(1-y^2)\Psi_{n,s}\right|^2\,dy\\
&+\frac{3\Phi\hat{n}}{2}\Im\int_{-1}^1(1-y^2)\Psi_{n,s}\left(\frac{d^2}{dy^2}-\hat{n}^2\right)\overline{\Psi_{n,s}}\,dy.
\end{aligned}\end{equation}
Note that
\begin{equation}\nonumber
\begin{aligned}
&\Im\int_{-1}^1(1-y^2)\Psi_{n,s}\left(\frac{d^2}{dy^2}-\hat{n}^2\right)\overline{\Psi_{n,s}}\,dy\\
=&-\Im\int_{-1}^1(1-y^2)'\Psi_{n,s}\overline{\Psi_{n,s}'}\,dy=\Im\int_{-1}^12y\Psi_{n,s}\overline{\Psi_{n,s}'}\,dy.
\end{aligned}\end{equation}
Therefore, it follows that
\begin{equation}\nonumber
\begin{aligned}
&\int_{-1}^1\left|\left(\frac{d^2}{dy^2}-\hat{n}^2\right)\Psi_{n,s}\right|^2+\left|\Phi\hat{n}(1-y^2)\Psi_{n,s}\right|^2\,dy\\
\leq&C\int_{-1}^1\left|\tilde{f}_n\right|^2\,dy+C|\Phi\hat{n}|\int_{-1}^1\left|\Psi_{n,s}\overline{\Psi_{n,s}'}\right|\,dy.
\end{aligned}\end{equation}
Lemma \ref{interpolation}, together with the slip boundary conditions $\Psi_{n,s}(\pm1)=0$, yields
\begin{equation}\nonumber
\begin{aligned}
\int_{-1}^1\left|\Psi_{n,s}\overline{\Psi_{n,s}'}\right|\,dy  \leq& \left(\int_{-1}^1\left|\Psi_{n,s}\right|^2\,dy\right)^\frac12
\left(\int_{-1}^1\left|\Psi_{n,s}'\right|^2\,dy\right)^\frac12 \\
\leq & C\left(\int_{-1}^1\left|(1-y^2)\Psi_{n,s}\right|^2\,dy\right)^\frac14
\left(\int_{-1}^1\left|\Psi_{n,s}'\right|^2\,dy\right)^\frac34.
\end{aligned}
\end{equation}
Combining this with \eqref{5-3-15} gives
\begin{equation}\label{5-3-19}
\begin{aligned}
&\int_{-1}^1\left|\left(\frac{d^2}{dy^2}-\hat{n}^2\right)\Psi_{n,s}\right|^2+\left|\Phi\hat{n}(1-y^2)\Psi_{n,s}\right|^2\,dy\\
\leq&C\int_{-1}^1\left|\tilde{f}_n\right|^2\,dy+C|\Phi\hat{n}|\left(\int_{-1}^1\left|(1-y^2)\Psi_{n,s}\right|^2\,dy\right)^\frac14
\left(\int_{-1}^1\left|\Psi_{n,s}'\right|^2\,dy\right)^\frac34\\
\leq &\frac12 \inte \left|\Phi\hat{n}(1-y^2)\Psi_{n,s}\right|^2\,dy+C|\Phi\hat{n}|^\frac23\int_{-1}^1\left|\Psi_{n,s}'\right|^2\,dy+C\int_{-1}^1\left|\tilde{f}_n\right|^2\,dy\\
\leq &\frac12 \inte \left|\Phi\hat{n}(1-y^2)\Psi_{n,s}\right|^2\,dy+C|\Phi\hat{n}|^\frac43\int_{-1}^1\left|\Psi_{n,s}\right|^2\,dy+C\int_{-1}^1\left|\tilde{f}_n\right|^2\,dy,
\end{aligned}
\end{equation}
where the estimate \eqref{5-3-14} has been used to get the last inequality.

Furthermore, the estimates \eqref{5-3-14} and \eqref{5-3-19}, together with Lemma \ref{interpolation}, yield
\begin{equation}\nonumber
\begin{aligned}
&\int_{-1}^1|\Psi_{n,s}|^2\,dy   \leq C\left(\int_{-1}^1\left|(1-y^2)\Psi_{n,s}\right|^2\,dy\right)^\frac12\left(\int_{-1}^1\left|\Psi_{n,s}'\right|^2\,dy\right)^\frac12 \\
\leq &C|\Phi\hat{n}|^{-1}\left(\int_{-1}^1 |\Phi\hat{n}|^\frac43 \left|\Psi_{n,s}\right|^2 + \left|\tilde{f}_n\right|^2\,dy
\right)^\frac12 \cdot\left(\int_{-1}^1\left|\Psi_{n,s}\right|^2\,dy\right)^\frac14\left(\int_{-1}^1\left|\tilde{f}_n\right|^2\,dy\right)^\frac14.
\end{aligned}
\end{equation}
Applying Young's inequality gives
\begin{equation}\label{5-3-23}
\int_{-1}^1|\Psi_{n,s}|^2\,dy \leq C|\Phi\hat{n}|^{-\frac43}\int_{-1}^1\left|\tilde{f}_n\right|^2\,dy.
\end{equation}
It follows from \eqref{5-3-13} that one has
\begin{equation}\label{5-3-25}
\int_{-1}^1\left|\tilde{f}_n\right|^2\,dy \leq \int_{-1}^1\left|f_n\right|^2\,dy+C|\Phi \hat{n}|^2\int_{-1}^1\left|\psi_{n,s}\right|^2\,dy\leq C\int_{-1}^1\left|f_n\right|^2\,dy.
\end{equation}
Combining \eqref{5-3-19}, \eqref{5-3-23} and \eqref{5-3-25} together yields
\begin{equation}\nonumber
\begin{aligned}
&\int_{-1}^1\left|\left(\frac{d^2}{dy^2}-\hat{n}^2\right)\Psi_{n,s}\right|^2 +|\Phi\hat{n}|^2 \left|(1-y^2)\Psi_{n,s}\right|^2 +|\Phi\hat{n}|^\frac43 \left|\Psi_{n,s}\right|^2\,dy
\leq C\int_{-1}^1\left|f_n\right|^2\,dy.
\end{aligned}
\end{equation}

Note that
\begin{equation}\label{5-3-27}\begin{aligned}
\int_{-1}^1\left|\Psi_{n,s}\right|^2\,dy=&\int_{-1}^1\left|\psi_{n,s}''-\hat{n}^2\psi_{n,s}\right|^2\,dy\\
=&\int_{-1}^1\left|\psi_{n,s}''\right|^2-\hat{n}^2\psi_{n,s}''\overline{\psi_{n,s}}-\hat{n}^2\overline{\psi_{n,s}''}\psi_{n,s}+\hat{n}^4\left|\psi_{n,s}\right|^2\,dy\\
=&\int_{-1}^1\left|\psi_{n,s}''\right|^2+2\hat{n}^2|\psi_{n,s}'|^2+\hat{n}^4\left|\psi_{n,s}\right|^2\,dy.
\end{aligned}\end{equation}
Thus one has
\begin{equation}\label{5-3-28}\begin{aligned}
\int_{-1}^1\left|\psi_{n,s}''\right|^2+\hat{n}^2|\psi_{n,s}'|^2+\hat{n}^4\left|\psi_{n,s}\right|^2\,dy\leq C|\Phi\hat{n}|^{-\frac43}\int_{-1}^1 |f_n|^2\,dy.
\end{aligned}\end{equation}

It follows from \eqref{5-3-13}, \eqref{5-3-28} and Lemma \ref{lemmaA1} that
\begin{equation}\nonumber
\ba
&~~\inte |\psi_{n,s}'|^2+\hat{n}^2|\psi_{n,s}|^2\,dy\\
\leq&~~C\left(\inte |\psi_{n,s}|^2\,dy\right)^\frac12\left[ \left(\inte |\psi_{n,s}''|^2\,dy\right)^\frac12+\left(\inte \hat{n}^4|\psi_{n,s}|^2\,dy\right)^\frac12\right]\\
\leq&~~C|\Phi\hat{n}|^{-\frac53}\inte |f_n|^2\,dy.
\ea\end{equation}

{\em Step 3. Higher order estimate.} Note that
\begin{equation}\label{5-3-30}
\int_{-1}^1\left|\Psi_{n,s}''\right|^2+2\hat{n}^2\left|\Psi_{n,s}'\right|^2+\hat{n}^4\left|\Psi_{n,s}\right|^2\,dy= \int_{-1}^1\left|\left(\frac{d^2}{dy^2}-\hat{n}^2\right)\Psi_{n,s}\right|^2\,dy \leq C\int_{-1}^1\left|f_n\right|^2\,dy.
\end{equation}
Similar to \eqref{5-3-27}, one also has
\begin{equation}\nonumber
\int_{-1}^1\left|\Psi_{n,s}''\right|^2\,dy =\int_{-1}^1\left|\psi_{n,s}^{(4)}\right|^2+2\hat{n}^2\left|\psi_{n,s}^{(3)}\right|^2+\hat{n}^4\left|\psi_{n,s}''\right|^2\,dy
\end{equation}
and
\begin{equation}\nonumber
\int_{-1}^1\left|\Psi_{n,s}'\right|^2\,dy =\int_{-1}^1\left|\psi_{n,s}^{(3)}\right|^2+2\hat{n}^2\left|\psi_{n,s}''\right|^2+\hat{n}^4\left|\psi_{n,s}'\right|^2\,dy.
\end{equation}
Hence it follows from \eqref{5-3-30} that
\begin{equation}\nonumber
\int_{-1}^1\left|\psi_{n,s}^{(4)}\right|^2+\hat{n}^2\left|\psi_{n,s}^{(3)}\right|^2+\hat{n}^4\left|\psi_{n,s}''\right|^2
+\hat{n}^6\left|\psi_{n,s}'\right|^2+\hat{n}^8\left|\psi_{n,s}\right|^2\,dy
\leq C\int_{-1}^1\left|f_n\right|^2\,dy.
\end{equation}
This, together with \eqref{5-3-28}, yields
\begin{equation}\nonumber
\int_{-1}^1\left|\psi_{n,s}^{(3)}\right|^2+\hat{n}^2\left|\psi_{n,s}''\right|^2+\hat{n}^4\left|\psi_{n,s}'\right|^2
+\hat{n}^6\left|\psi_{n,s}\right|^2\,dy
\leq C|\Phi\hat{n}|^{-\frac23}\int_{-1}^1\left|f_n\right|^2\,dy.
\end{equation}

{\em Step 4. The proof of a priori estimate \eqref{est5-3}-\eqref{est5-5}.} Multiplying \eqref{slip} by $\overline{\psi_{n,s}''}$ and integrating the resulting equation over $[-1,1]$ yield
\begin{equation}\label{5-3-36}
\int_{-1}^1 \overline{\psi_{n,s}''}\left[-i\hat{n}  U''+ i\hat{n} U\left(\frac{d^2}{d y^2}-\hat{n}^2\right)-\left(\frac{d^2}{d y^2}-\hat{n}^2\right)^2\right]\psi_{n,s} \,d y=\int_{-1}^1 \overline{\psi_{n,s}''}f_n\,dy.
\end{equation}
It follows from integration by parts and the homogeneous boundary conditions that
\begin{equation}\nonumber
\int_{-1}^1-i\hat{n} U''\psi_{n,s}\overline{\psi_{n,s}''}\,dy=\int_{-1}^1i\hat{n} U''|\psi_{n,s}'|^2\,dy
\end{equation}
and
\begin{equation}\nonumber
\int_{-1}^1 i\hat{n} U \left(\frac{d^2}{d y^2}-\hat{n}^2\right)\psi_{n,s} \overline{\psi_{n,s}''}\,dy=\int_{-1}^1i\hat{n} U|\psi_{n,s}''|^2 +i\hat{n}^3 U|\psi_{n,s}'|^2+i\hat{n}^3 U'\psi_{n,s}\overline{\psi_{n,s}'}\,dy.
\end{equation}
Furthermore, one has
\begin{equation}\nonumber
\begin{aligned}
\int_{-1}^1-\left(\frac{d^2}{d y^2}-\hat{n}^2\right)^2\psi_{n,s} \overline{\psi_{n,s}''}\,dy
=&\int_{-1}^1-\hat{n}^4\psi_{n,s} \overline{\psi_{n,s}''} +2\hat{n}^2|\psi_{n,s}''|^2-\psi_{n,s}^{(4)} \overline{\psi_{n,s}''}\,dy\\
=&\int_{-1}^1\hat{n}^4|\psi_{n,s}'|^2+
2\hat{n}^2|\psi_{n,s}''|^2+\left|\psi^{(3)}_n\right|^2\,dy.
\end{aligned}
\end{equation}
Then one can decompose \eqref{5-3-36} into its real and imaginary parts as
\begin{equation}\label{5-3-40}
\int_{-1}^1 \hat{n}^4|\psi_{n,s}'|^2+2\hat{n}^2|\psi_{n,s}''|^2+\left|\psi_{n,s}^{(3)}\right|^2\,d y=\Re\int_{-1}^1 f_n\overline{\psi_{n,s}''}\,dy+\Im\int_{-1}^1 \hat{n}^3 U'\psi_{n,s}\overline{\psi_{n,s}'}\,dy
\end{equation}
and
\begin{equation}\label{5-3-41}
\int_{-1}^1 \hat{n}U''|\psi_{n,s}'|^2+ \hat{n}U|\psi_{n,s}''|^2+ \hat{n}^3 U|\psi_{n,s}'|^2\,d y+\Re\int_{-1}^1 \hat{n}^3 U'\psi_{n,s}\overline{\psi_{n,s}'}\,dy=\Im\int_{-1}^1 f_n\overline{\psi_{n,s}''}\,dy,
\end{equation}
respectively. Note that
\begin{equation}\nonumber
\Re\int_{-1}^1 \hat{n}^3 U'\psi_{n,s}\overline{\psi_{n,s}'}\,dy=-\frac12\int_{-1}^1 \hat{n}^3 U''|\psi_{n,s}|^2\,dy.
\end{equation}
The equation \eqref{5-3-41} can be rewritten as follows,
\begin{equation}\label{5-3-43}\begin{aligned}
&\frac{3\Phi\hat{n}}{4}\int_{-1}^1 \hat{n}^2|\psi_{n,s}'|^2(1-y^2)+|\psi_{n,s}''|^2(1-y^2)+ \hat{n}^2 |\psi_{n,s}|^2\,d y\\
=&-\Im\int_{-1}^1 f_n\overline{\psi_{n,s}''}\,dy+\frac{3\Phi\hat{n}}{2}\int_{-1}^1 |\psi_{n,s}'|^2\, dy.
\end{aligned}\end{equation}
On the other hand, multiplying \eqref{5-3-9} and \eqref{5-3-11} by $\hat{n}^2$ yields
\begin{equation}\label{5-3-44}
\int_{-1}^1  \hat{n}^6|\psi_{n,s}|^2+2\hat{n}^4|\psi_{n,s}'|^2+\hat{n}^2|\psi_{n,s}''|^2\,d y=-\Re\int_{-1}^1 \hat{n}^2f_n\overline{\psi_{n,s}}\,dy+\Im\int_{-1}^1 \hat{n}^3 U'\psi_{n,s}'\overline{\psi_{n,s}}\,dy
\end{equation}
and
\begin{equation}\label{5-3-45}
|\Phi\hat{n}|\int_{-1}^1\hat{n}^4|\psi_{n,s}|^2(1-y^2)+ \hat{n}^2|\psi_{n,s}'|^2(1-y^2)\,d y\leq C\left|\int_{-1}^1 \hat{n}^2f_n\overline{\psi_{n,s}}\,dy\right|.
\end{equation}
Summing \eqref{5-3-40} and \eqref{5-3-44} yields
\begin{equation}\label{5-3-46}
\int_{-1}^1 \hat{n}^6|\psi_{n,s}|^2+3\hat{n}^4|\psi_{n,s}'|^2+3\hat{n}^2|\psi''_n|^2+\left|\psi_{n,s}^{(3)}\right|^2\,d y=\Re\int_{-1}^1 f_n\overline{\psi_{n,s}''}\,dy-\Re\int_{-1}^1 \hat{n}^2 f_n\overline{\psi_{n,s}}\,dy.
\end{equation}
By virtue of integration by parts and Cauchy-Schwarz inequality, one has
\begin{equation}\label{5-3-47}\ba
&~~\left|\int_{-1}^1\Re\int_{-1}^1 f_n\overline{\psi_{n,s}''}\,dy\right|=\left|\Re\int_{-1}^1 i\hat{n} F_{2,n}\overline{\psi_{n,s}''}+F_{1,n}\overline{\psi_{n,s}^{(3)}}\,dy\right|\\
\leq&~~ C\inte |\BF_n|^2\,dy+\frac12\inte \hat{n}^2|\psi_{n,s}''|^2\,dy+\frac12\inte|\psi_{n,s}^{(3)}|^2\,dy
\ea\end{equation}
and
\begin{equation}\label{5-3-48}\ba
&~~\left|\Re\int_{-1}^1 \hat{n}^2f_n\overline{\psi_{n,s}}\,dy\right|=\left|\Re\int_{-1}^1 i\hat{n}^3 F_{2,n}\overline{\psi_{n,s}}+\hat{n}^2F_{1,n}\overline{\psi_{n,s}'}\,dy\right|\\
\leq&~~ C\inte |\BF_n|^2\,dy+\frac12\inte \hat{n}^6|\psi_{n,s}|^2\,dy+\frac12\inte \hat{n}^4|\psi_{n,s}'|^2\,dy.
\ea\end{equation}
Combining \eqref{5-3-46}-\eqref{5-3-48} gives \eqref{est5-5}. Furthermore, it follows from \eqref{est5-5} and \eqref{5-3-43} that
\begin{equation}\nonumber
\ba
&~~\frac{3 | \Phi\hat{n}| }{4}\int_{-1}^1 \hat{n}^2|\psi_{n,s}'|^2(1-y^2)+  |\psi_{n,s}''|^2(1-y^2)+ \hat{n}^2|\psi_{n,s}|^2\,d y\\
\leq &~~\left|\int_{-1}^1 i\hat{n} F_{2,n}\overline{\psi_{n,s}''}+F_{1,n}\overline{\psi_{n,s}^{(3)}}\,dy\right|+\frac{3|\Phi\hat{n}|}{2}\int_{-1}^1 |\psi_{n,s}'|^2\, dy\\
\leq &~~C\left(\int_{-1}^1|\BF_n|^2\,dy\right)^\frac12\left(\int_{-1}^1\hat{n}^2|\psi_{n,s}''|^2+\left|\psi_{n,s}^{(3)}\right|^2\,dy\right)^\frac12+\frac{3|\Phi\hat{n}|}{2}\int_{-1}^1 |\psi_{n,s}'|^2\, dy\\
\leq &~~C\int_{-1}^1|\BF_n|^2\,dy+\frac{3|\Phi\hat{n}|}{2}\int_{-1}^1 |\psi_{n,s}'|^2\, dy.
\ea\end{equation}
It follows from Lemma \ref{lemmaHLP} that one has
\begin{equation}\label{5-3-50}
|\Phi\hat{n}|\int_{-1}^1\hat{n}^2(1-y^2)|\psi_{n,s}'|^2+(1-y^2)|\psi_{n,s}''|^2+\hat{n}^2|\psi_{n,s}|^2\,d y
\leq C\int_{-1}^1|\BF_n|^2\,dy,
\end{equation}
provided $L\leq L_0$.
Similarly, using \eqref{5-3-45} and \eqref{est5-5} yields
\begin{equation}\label{5-3-51}\ba
&~~|\Phi\hat{n}|\int_{-1}^1\hat{n}^4|\psi_{n,s}|^2(1-y^2)
+\hat{n}^2|\psi_{n,s}'|^2(1-y^2)\,d y\\
\leq &~~C\left(\int_{-1}^1|\BF_n|^2\,dy\right)^\frac12
\left(\int_{-1}^1 \hat{n}^6|\psi_{n,s}|^2+\hat{n}^4\left|\psi_{n,s}'\right|^2\,dy\right)^\frac12\\
\leq &~~C\int_{-1}^1|\BF_n|^2\,dy.
\ea\end{equation}
With the aid of Lemma \ref{weightinequality}, combining \eqref{est5-5} and \eqref{5-3-50} gives
\be\label{5-3-52}\ba
\inte |\psi_{n,s}''|^2\,dy\leq &~~C\left(\inte|\psi_{n,s}''|^2(1-y^2)\,dy\right)^\frac23\left(\inte\left|\psi_{n,s}^{(3)}\right|^2\,dy\right)^\frac13\\
&~~+C\inte |\psi_{n,s}''|^2(1-y^2)\,dy\\
\leq&~~C|\Phi\hat{n}|^{-\frac23}\int_{-1}^1|\BF_n|^2\,dy.
\ea\ee
Similarly, it follows from Lemma \ref{weightinequality}, \eqref{est5-5} and \eqref{5-3-51} that one has
\be\label{5-3-53}\ba
\inte \hat{n}^4|\psi_{n,s}|^2\,dy\leq &~~C\left(\inte \hat{n}^4|\psi_{n,s}|^2(1-y^2)\,dy\right)^\frac23\left(\inte \hat{n}^4|\psi_{n,s}'|^2\,dy\right)^\frac13\\
&~~+C\inte \hat{n}^4|\psi_{n,s}|^2(1-y^2)\,dy\\
\leq&~~C|\Phi\hat{n}|^{-\frac23}\int_{-1}^1|\BF_n|^2\,dy.
\ea\ee
The estimates  \eqref{5-3-52}-\eqref{5-3-53}, together with Lemma \ref{lemmaA1}, give
\be\label{5-3-54}\ba
\inte \hat{n}^2|\psi_{n,s}'|^2\,dy\leq
C\left(\inte \hat{n}^4|\psi_{n,s}|^2\,dy\right)^\frac12\left(\inte|\psi_{n,s}''|^2\,dy\right)^\frac12
\leq C|\Phi\hat{n}|^{-\frac23}\int_{-1}^1|\BF_n|^2\,dy.
\ea\ee

{\em Step 5. Weighted estimates and boundary estimate of $\psi_{n,s}$}. Applying integration by parts to the inequality \eqref{5-3-11} yields
\begin{equation}\label{5-3-55}
\ba
&~~\int_{-1}^1\hat{n}^2|\psi_{n,s}|^2(1-y^2)
+|\psi_{n,s}'|^2(1-y^2)\,d y\\
\leq &~~C|\Phi\hat{n}|^{-1}\left(\int_{-1}^1|\BF_n|^2\,dy\right)^\frac12
\left(\int_{-1}^1\hat{n}^2|\psi_{n,s}|^2+\left|\psi_{n,s}'\right|^2\,dy\right)^\frac12.
\ea\end{equation}
It follows from Lemma \ref{weightinequality} and \eqref{5-3-52}-\eqref{5-3-55} that one has
\begin{equation*}\ba
&~~\int_{-1}^1\hat{n}^2|\psi_{n,s}|^2+\left|\psi_{n,s}'\right|^2\,dy\\
\leq&~~C\left(\inte \hat{n}^2|\psi_{n,s}|^2(1-y^2)\,dy\right)^\frac23\left(\inte \hat{n}^2|\psi_{n,s}'|^2\,dy\right)^\frac13+C\inte \hat{n}^2|\psi_{n,s}|^2(1-y^2)\,dy\\
&~~+C\left(\inte |\psi_{n,s}'|^2(1-y^2)\,dy\right)^\frac23\left(\inte |\psi_{n,s}''|^2\,dy\right)^\frac13+C\inte|\psi_{n,s}'|^2(1-y^2)\,dy\\
\leq&~~C|\Phi\hat{n}|^{-\frac29}\left(\inte \hat{n}^2|\psi_{n,s}|^2(1-y^2)\,dy\right)^\frac23\left(\inte |\BF_n|^2\,dy\right)^\frac13\\
&~~+C|\Phi\hat{n}|^{-\frac29}\left(\inte |\psi_{n,s}'|^2(1-y^2)\,dy\right)^\frac23\left(\inte |\BF_n|^2\,dy\right)^\frac13\\
&~~+C\inte \hat{n}^2|\psi_{n,s}|^2(1-y^2)\,dy+C\inte|\psi_{n,s}'|^2(1-y^2)\,dy\\
\leq&~~C|\Phi\hat{n}|^{-\frac29}|\Phi \hat{n}|^{-\frac23}\left(\int_{-1}^1\hat{n}^2|\psi_{n,s}|^2+\left|\psi_{n,s}'\right|^2\,dy\right)^\frac13\left(\inte |\BF_n|^2\,dy\right)^\frac23\\
&~~+C|\Phi\hat{n}|^{-1}\left(\int_{-1}^1|\BF_n|^2\,dy\right)^\frac12
\left(\int_{-1}^1\hat{n}^2|\psi_{n,s}|^2+\left|\psi_{n,s}'\right|^2\,dy\right)^\frac12.
\ea\end{equation*}
Using Young's inequality yields
\begin{equation}\nonumber
\inte \hat{n}^2|\psi_{n,s}|+|\psi_{n,s}'|^2 \,dy \leq C|\Phi\hat{n}|^{-\frac43}\inte |\BF_n|^2\,dy.
\end{equation}
This, together with \eqref{5-3-55}, gives
\be\nonumber
\int_{-1}^1\hat{n}^2|\psi_{n,s}|^2(1-y^2)
+|\psi_{n,s}'|^2(1-y^2)\,d y\\
\leq C|\Phi\hat{n}|^{-\frac53}\int_{-1}^1|\BF_n|^2\,dy.
\ee

We also give the estimate of $|\psi_{n,s}'(1)|$ in terms of $\BF_n$, which plays an important role in recovering the boundary value $\psi_{n}$. By Lemma \ref{lemmaA2}, it holds that
\be\nonumber
\ba
|\psi_{n,s}'(\pm1)|\leq C\left(\inte |\psi_{n,s}'|^2\,dy\right)^\frac14\left(\inte |\psi_{n,s}''|^2\,dy\right)^\frac14
\leq  C|\Phi\hat{n}|^{-\frac12}\left(\inte |\BF_n|^2\,dy\right)^\frac12.
\ea\ee
This finishes the proof of Lemma \ref{slip-est}.
\end{proof}

Similar to what has been done  in Section \ref{sec-ex}, the a priori estimates established in Lemma \ref{slip-est} together with the Galerkin method give the existence of the solutions for the problem \eqref{slip}.

Now we give the proof of Proposition \ref{mediumstream}.

\begin{proof}[Proof of Proposition \ref{mediumstream}]
For a general function $f_n$, it can be written as
\be \nonumber
f_n(y)=f_{n}^e(y)+f_{n}^o(y):=\frac{f_n(y)+f_n(-y)}{2}+\frac{f_n(y)-f_n(-y)}{2},
\ee
where $f_{n}^e(y)$ and $f_{n}^o(y)$ are even and odd functions, respectively.  Because the problem \eqref{stream}-\eqref{streamBC} is a linear problem, the solution $\psi_n$ can be written as the summation of two solutions associated with $f_n^e$ and $f_n^o$. First, we assume that $f_n$ is an even function.

{\em Step 1. Boundary layer analysis.} In order to recover the no slip boundary condition, we analyze the associated boundary layer carefully. Define the operators
\begin{equation}\nonumber
\mcA_n=i\frac{3\Phi\hat{n}}{4}(1-y^2)-\left(\frac{d^2}{dy^2}-\hat{n}^2\right)\text{ and }
~~\mcH_n=\frac{d^2}{dy^2}-\hat{n}^2.
\end{equation}
Let
\begin{equation}\nonumber
\mcA_n^\pm=i\frac{3\Phi\hat{n}}{2}(1\mp y)-\left(\frac{d^2}{dy^2}-\hat{n}^2\right).
\end{equation}
They can be regarded as the leading parts of the operator $\mcA_n$ near the boundary $y=\pm1$, respectively.

We look for two  boundary layer functions $\psi_{n,BL}^\pm(y)$, which are the solutions to
\begin{equation}\label{blayereq}
\mcA_n^\pm\mcH_n\psi_{n,BL}^\pm=0,
\end{equation}
respectively.
First, let $Ai(z)$ denote the standard Airy function which satisfies
\begin{equation}\nonumber
\frac{d^2Ai(z)}{d z^2}-zAi(z)=0, ~~\text{ in }\mathbb{C}.
\end{equation}
Define
\begin{equation}\nonumber
\widetilde{G}_{n,\Phi}(\rho)=\left\{\begin{aligned}
& Ai\left(C_+\left(\rho+\frac{2\beta \hat{n}}{3i\Phi}\right)\right),~~\text{if }n> 0,\\
&Ai\left(C_-\left(\rho+\frac{2\beta \hat{n}}{3i\Phi}\right)\right),~~\text{if }n<0,
\end{aligned}\right.
\end{equation}
where $\beta=\left|\frac{3\Phi\hat{n}}{2}\right|^\frac13$ and $C_\pm=e^{\pm i\frac{\pi}{6}}$. It's easy to check that
\begin{equation}\nonumber
\mcA_n^\pm\widetilde{G}_{n,\Phi}(\beta(1\mp y))=0.
\end{equation}
Next, set
\begin{equation}\label{5-3-61}
G_{n,\Phi}(\rho)=\int_\rho^{+\infty}e^{-\frac{|\hat{n}|}{\beta}(\rho-\tau)}
\int_\tau^{+\infty}e^{-\frac{|\hat{n}|}{\beta}(\sigma-\tau)}\widetilde{G}_{n,\Phi}(\sigma)\,d\sigma\,d\tau.
\end{equation}
The straightforward computations show that
\begin{equation}\nonumber
\frac{d^2}{d \rho^2}G_{n,\Phi}(\rho)-\frac{\hat{n}^2}{\beta^2}G_{n,\Phi}(\rho)=\widetilde{G}_{n,\Phi}(\rho).
\end{equation}
Define
\begin{equation}\nonumber
\psi_{n,BL}^\pm=C_{0,n,\Phi}G_{n,\Phi}(\beta(1\mp y))
\end{equation}
with
\begin{equation}\label{5-3-62}
C_{0,n,\Phi}=
\left\{\begin{aligned}
&\frac{1}{G_{n,\Phi}(0)},\quad &\text{if }|G_{n,\Phi}(0)|\ge 1,\\
&1,\quad &\text{otherwise}.
\end{aligned}\right.
\end{equation}
The straightforward calculations show that $|\psi^{\pm}_{n,BL}(1)|\le 1$ and satisfy \eqref{blayereq} for $y\in (-1,1)$.

Let $\chi ^+\in C^\infty([-1,1])$ be an increasing function satisfying \eqref{5-3-5} and $\chi^-(y)=\chi^+(-y)$. Define
\begin{equation}\nonumber
\psi_{n,BL}=\chi^+\psi_{n,BL}^++\chi^-\psi_{n,BL}^-.
\end{equation}
{\em Step 2. The remainder term $\psi_{n,e}$.} Suppose that $\psi_{n,e}$ satisfies the following problem
\begin{equation}\nonumber
\left\{\begin{aligned}
&-i\hat{n} U''(y)\psi_{n,e}+i\hat{n}U(y)\left(\frac{d^2}{dy^2}
-\hat{n}^2\right)\psi_{n,e}-\left(\frac{d^2}{dy^2}-\hat{n}^2\right)^2
\psi_{n,e}=Q_{n,BL},\\
&\psi_{n,e}(\pm1)=\psi_{n,e}''(\pm1)=0,
\end{aligned}\right.
\end{equation}
where
\begin{equation}\nonumber Q_{n,BL}=i\hat{n} U''(y)\psi_{n,BL}-\mcA_n\mcH_n\psi_{n,BL}.
\end{equation}

According to Lemma \ref{airy-est}, for sufficiently large $\Phi$ such that $\frac12 \beta= \frac12\left|\frac{3\Phi\hat{n}}{2}\right|^\frac13\ge R$, one has
\be\nonumber
\ba
&~~\inte  \left|i\hat{n} U''(y)\psi_{n,BL}\right|^2\,dy\leq C|\Phi\hat{n}|^2\int_\frac14^1 \left|\psi_{n,BL}^+\right|^2\,dy\\
\leq&~~C\left(\int_0^R+\int_R^{\frac34\beta}\right)\beta^5|G_{n,\Phi}(\rho)|^2\,d\rho\\
\leq&~~C\int_0^R\beta^5\,d\rho+C \int_R^{\frac34\beta}\beta^5e^{-2\rho}\,d\rho\\
\leq&~~C\beta^5,
\ea\ee
where $R$ is the constant appeared in Lemma \ref{airy-est}. Using the fact  $\mcA_n^+\mcH_n\psi_{n,BL}^+=0$ yields
\be\nonumber
\ba
-\mcA_n \mcH_n (\chi^+\psi_{n,BL}^+)=&~~
\mcA_n^+\mcH_n((1-\chi^+)\psi_{n,BL}^+)+(\mcA_n^+\mcH_n-\mcA_n\mcH_n)(\chi^+\psi_{n,BL}^+)\\
=&~~\left(i\frac{3\Phi\hat{n}}{2}(1-y)-\frac{d^2}{dy^2}+\hat{n}^2\right)\left(\frac{d^2}{dy^2}-\hat{n}^2\right)((1-\chi^+)\psi_{n,BL}^+)\\
&~~+i\frac{3\Phi\hat{n}}{4}(1-y)^2\left(\frac{d^2}{dy^2}-\hat{n}^2\right)(\chi^+\psi_{n,BL}^+).
\ea\ee
Noting that $\chi^+\psi_{n,BL}^+$ vanishes on $[-1,0]$, it suffices to estimate $\mcA_n\mcH_n(\chi^+\psi_{n,BL}^+)$ on $[0,1]$. According to Lemma \ref{airy-est}, for sufficiently large $\Phi$, one has
\be\nonumber
\ba
&~~
\int_0^1\left|\frac{3\Phi\hat{n}}{2 }(1-y)\left(\frac{d^2}{dy^2}-\hat{n}^2\right)((1-\chi^+)\psi_{n,BL}^+)\right|^2\,dy\\
\le&~~C
\int_0^\frac12\beta^6\left( |(\psi_{n,BL}^+) ''|^2+| (\psi_{n,BL}^+)'|^2
+|\psi_{n,BL}^+ |^2+\hat{n}^4|\psi_{n,BL}^+ |^2\right)\,dy\\
\le&~~C
\int_{\frac12\beta}^{\beta}\beta^9|G_{n,\Phi}''(\rho)|^2+\beta^7|G_{n,\Phi}'(\rho)|^2
+\beta^5|G_{n, \Phi }(\rho)|^2+\beta^9|G_{n,\Phi}(\rho)|^2\,d\rho\\
\le&~~C
\int_{\frac12\beta}^{\beta}(\beta^9+\beta^7
+\beta^5)e^{-2\rho}\,d\rho\\
\le&~~C e^{-\beta}(\beta^{10}+\beta^8
+\beta^6)\le C.
\ea\ee
On the other hand,
\be\nonumber
\ba
&~~\int_0^1\left|(\mcA^+\mcH_n-\mcA\mcH_n)(\chi^+\psi_{n,BL}^+)\right|^2\,dy\\
=&~~
\int_\frac14^1\left|\frac{3\Phi\hat{n}}{4}(1-y)^2\left(\frac{d^2}{dy^2}-\hat{n}^2\right)(\chi^+\psi_{n,BL}^+)\right|^2\,dy\\
\le&~~C
\int_\frac14^1\beta^6(1-y)^4\left(|\psi_{n,BL}''|^2+|(\chi^+)''|^2|\psi_{n,BL}'|^2
+|(\chi^+)'|^2|\psi_{n,BL}|^2+\hat{n}^4|\psi_{n,BL}|^2\right)\,dy\\
\le&~~C
\int_0^{\frac34\beta}\rho^4\left(\beta^5|G_{n,\Phi}''|^2+\beta^5|G_{n,\Phi}|^2\right)\,d\rho+C
\int_{\frac12\beta}^{\frac34\beta}\rho^4\left(\beta^3|G_{n,\Phi}'|^2
+\beta|G_{n,\Phi}|^2\right)\,d\rho\\
\le&~~C\beta^5\int_0^R\rho^4\,d\rho
+C\beta^5\int_R^{\frac34\beta}\rho^4e^{-2\rho}\,d\rho+C\left(\beta^3
+\beta\right)
\int_{\frac12\beta}^{\frac34\beta}\rho^4e^{-2\rho}\,d\rho\\
\leq&~~C\beta^5.
\ea\ee
Hence it follows from Lemma \ref{slip-est} that
\begin{equation}\label{5-3-73}
\inte |\psi_{n,e}|^2\,dy \le C|\Phi\hat{n}|^{-2}\beta^5\le C|\Phi\hat{n}|^{-\frac13},
\end{equation}
\begin{equation}\label{5-3-74}
\inte \hat{n}^2|\psi_{n,e}|^2+|\psi_{n,e}'|^2\,dy \le C|\Phi\hat{n}|^{-\frac53}\beta^5\le C,
\end{equation}
\begin{equation}\label{5-3-75}
\int_{-1}^1 \hat{n}^4|\psi_{n,e}|^2+\hat{n}^2|\psi_{n,e}'|^2+|\psi_{n,e}''|^2\,d y\leq C|\Phi\hat{n}|^{-\frac43}\beta^5\le C|\Phi\hat{n}|^\frac13,
\end{equation}
and
\begin{equation}\label{5-3-76}
\inte \left|\psi_{n,e}^{(3)}\right|^2+\hat{n}^2|\psi_{n,e}''|^2+\hat{n}^4|\psi_{n,e}'|^2+\hat{n}^6|\psi_{n,e}|^2\,dy \leq C|\Phi\hat{n}|^{-\frac23}\beta^5\le C|\Phi\hat{n}|.
\end{equation}

{\em Step 3. The irrotational flow $\psi_{n,p}$ and the associated error term $\psi_{n,r}$.} Denote
\begin{equation}\nonumber
\psi_{ n ,p}=e^{\hat{n}y}+e^{-\hat{n}y}.
\end{equation}
Let $\psi_{n,r}$ be the solution to the following problem,
\begin{equation}\nonumber
\left\{\begin{aligned}
&-i\hat{n} U''(y)\psi_{n,r}+i\hat{n}U(y)\left(\frac{d^2}{dy^2}
-\hat{n}^2\right)\psi_{n,r}-\left(\frac{d^2}{dy^2}-\hat{n}^2\right)^2
\psi_{n,r}=i\hat{n} U''(y)\psi_{n,p},\\
&\psi_{n,r}(\pm1)=\psi_{n,r}''(\pm1)=0.
\end{aligned}\right.
\end{equation}

Suppose that
\begin{equation}\label{recoverBC1}
\left\{\begin{aligned}
&b_n\psi_{n,BL}^+(1)+a_n\psi_{n,p}(1)=0,\\
&\psi_{n,s}'(1)+b_n[(\psi_{n,BL}^+)'(1)+\psi_{n,e}'(1)]+a_n[\psi_{n,p}'(1)+\psi_{n,r}'(1)]=0.
\end{aligned}\right.
\end{equation}
This implies that $\psi_n =\psi_{n,s}+b_n (\psi_{n,BL}+\psi_{n,e})+a_n(\psi_{n, p}+\psi_{n,r})$ satisfies the no slip boundary condition at $y=1$.
Furthermore, if $f_n$ is even with respect to $y$, the associate solution $\psi_{n,s}$ is also even. Similarly, it's easy to verify that all the components $\psi_{n,BL}$, $\psi_{n,e}$, $\psi_{n,p}$, and $\psi_{n,r}$ are all even. Thus we also have
\begin{equation}\label{recoverBC-1}
\left\{\begin{aligned}
&b_n\psi_{n,BL}^-(-1)+a_n\psi_{n,p}(-1)=0,\\
&\psi_{n,s}'(-1)+b_n[(\psi_{n,BL}^-)'(-1)+\psi_{n,e}'(-1)]+a_n[\psi_{n,p}'(-1)+\psi_{n,r}'(-1)]=0.
\end{aligned}\right.
\end{equation}
Therefore, $\psi_n =\psi_{n,s}+b_n (\psi_{n,BL}+\psi_{n,e})+a_n(\psi_{n, p}+\psi_{n,r})$ satisfies \eqref{streamBC} at $y=\pm 1$.
Solving the linear system \eqref{recoverBC1} gives
\begin{equation}\label{5-3-79}
a_n=-\frac{\psi_{n,BL}^+(1)}{\psi_{n,p}(1)}b_n\quad\text{and}\quad
b_n=\frac{\psi_{n,s}'(1)}{\psi_{n,BL}^+ (1)\frac{\psi_{n,p}'(1)+\psi_{n,r}'(1)}{\psi_{n,p}(1)}
-(\psi_{n,BL}^+)'(1)-\psi_{n,e}'(1)}.
\end{equation}
In order to get the estimates for $a_n$ and $b_n$, one of the key issue is to estimate $(\psi_{n,BL}^+)'(1)$. The straightforward computations show
\begin{equation}\nonumber
\begin{aligned}
(\psi_{n,BL}^+)'(1)=&-\beta C_{0,n,\Phi}\frac{d G_{n,\Phi}}{d \rho}(0)\\
=&\beta C_{0,n,\Phi}\left(\frac{|\hat{n}|}{\beta }G_{n,\Phi}(0)+C_-\int_\ell e^{-\lambda z}Ai\left(z+\lambda^2\right)\,dz\right),
\end{aligned}
\end{equation}
where $\lambda=\frac{|\hat{n}|}{\beta }C_-$ and $\ell$ is the contour $\ell=\left\{re^{i\frac{\pi}{6}}|r\ge 0\right\}$. Note that $\arg\lambda=-\frac{\pi}{6}$ and
\begin{equation}\nonumber
|\lambda|=\frac{|\hat{n}|}{\beta }=\left(\frac23\right)^\frac13(\Phi^{-\frac12}|\hat{n}|)^\frac23.
\end{equation}
Choose  $\epsilon_1\in (0,1)$ such that
\begin{equation}\nonumber
|\lambda|\le \min\{\epsilon,\frac{1}{48}\tilde{C}_0\}
\end{equation}
as long as $|\hat{n}|\le \epsilon_1\Phi^\frac12$,
where $\epsilon$ is the small constant indicated in Lemma \ref{airy-est}.
One can apply Lemma \ref{airy-est} to obtain
\begin{equation}\label{5-3-82}
\begin{aligned}
|(\psi_{n,BL}^+)'(1)|=&\left|\beta C_{0,n,\Phi}\left(\frac{|\hat{n}|}{\beta }G_{n,\Phi}(0)+C_-\int_\ell e^{-\lambda z}Ai\left(z+\lambda^2\right)\,dz\right)\right|\\
\ge&\left|\beta C_{0,n,\Phi}\int_\ell e^{-\lambda z}Ai\left(z+\lambda^2\right)\,d z\right|-\left|\beta C_{0,n,\Phi}\frac{|\hat{n}|}{\beta }G_{n,\Phi}(0)\right|\\
\ge&\beta \tilde{C}_0\left(\frac16-\frac{|\hat{n}|}{\tilde{C}_0\beta }\right)\\
\ge & \frac{1}{12}\beta \tilde{C}_0=:\kappa\beta .
\end{aligned}\end{equation}

For the term $\psi_{n,e}'(1)$, it follows from \eqref{5-3-74}-\eqref{5-3-75} and Lemma \ref{lemmaA2} that
\begin{equation}\label{5-3-83}
\begin{aligned}
|\psi_{n,e}'(1)|\le&C\left(\inte|\psi_{n,e}'|^2\,dy\right)^\frac14\left(\inte |\psi_{n,e}''|^2\,dy\right)^\frac14\le C|\Phi\hat{n}|^\frac{1}{12}.
\end{aligned}
\end{equation}

On the other hand, using Lemma \ref{lemmaA2} again gives
\begin{equation}\nonumber
\begin{aligned}
|\psi_{n,r}'(1)|\leq &C \left(\inte |\psi_{n,r}'|^2\,dy \right)^\frac14\left(\inte  |\psi_{n,r}''|^2\,dy \right)^\frac14\\
\leq& C|\Phi\hat{n}|^{-\frac34} \left(\inte \left|i\hat{n} U''(y)\psi_{n,p}\right|^2\,dy \right)^\frac12\\
\leq& C|\Phi\hat{n}|^\frac14 \left(\inte \left|\psi_{n,p}\right|^2\,dy \right)^\frac12\\
\leq& C|\Phi\hat{n}|^\frac14 \psi_{n,p}(1).
\end{aligned}
\end{equation}
Hence one has
\begin{equation}\label{5-3-85}
\begin{aligned}
\left|\psi_{n,BL}^+(1)\frac{\psi_{n,p}'(1)+\psi_{n,r}'(1)}{\psi_{n,p}(1)}\right|\leq C(|\hat{n}|+|\Phi\hat{n}|^\frac14)\leq \frac\kappa 4\beta,
\end{aligned}
\end{equation}
provided $\epsilon_1$ is sufficiently small, where $\kappa$ is the constant defined in \eqref{5-3-82}. For sufficiently large $\beta$, combining \eqref{5-3-79}-\eqref{5-3-85} yields
\begin{equation}\label{5-3-86}
\begin{aligned}
|b_n|\leq C\beta^{-1}|\Phi\hat{n}|^{-\frac12}\left(\inte |\BF_n|^2\,dy\right)^\frac12\leq C|\Phi\hat{n}|^{-\frac56}\left(\inte |\BF_n|^2\,dy\right)^\frac12
\end{aligned}
\end{equation}
and
\begin{equation}\label{5-3-87}
\begin{aligned}
|a_n|\leq &C|\Phi\hat{n}|^{-\frac56}|\psi_{n,p}(1)|^{-1}\left(\inte |\BF_n|^2\,dy\right)^\frac12.
\end{aligned}
\end{equation}

Next one can obtain the estimates of the boundary $b_n\psi_{n,BL}$. According to Lemma \ref{airy-est}, using $|\hat{n}|\leq \epsilon_1|\Phi|^\frac12$ yields
\begin{equation}\label{5-3-88}
\begin{aligned}
&|b_n|^2\inte \hat{n}^2|\psi_{n,BL}|^2+\left|\psi_{n,BL}'\right|^2\,dy\\
\le&C|b_n|^2\int_\frac14^1\hat{n}^2|\psi_{n,BL}^+|^2+\left|\psi_{n,BL}^+\right|^2+\left|(\psi_{n,BL}^+)'\right|^2\,dy\\
\le&C|b_n|^2\int_0^{\frac34\beta }\left(\hat{n}^2|G_{n,\Phi}(\rho)|^2+|G_{n,\Phi}(\rho)|^2+\left|\frac{d}{d\rho}G_{n,\Phi}(\rho)\right|^2\beta ^2\right)\beta^{-1}\,d \rho\\
\le&C | \Phi\hat{n}|^{-\frac53}\inte |\BF_n|^2\,dy\left(\int_0^R\beta +\beta ^{-1}\,d\rho+\int_R^{\frac34\beta }e^{-2\rho}(\beta +\beta ^{-1})\,d\rho\right)\\
\le&C|\Phi\hat{n}|^{-\frac43}\inte |\BF_n|^2\,dy.
\end{aligned}
\end{equation}
Similar computations give
\begin{equation}\label{5-3-89}
\begin{aligned}
&|b_n|^2\inte\left|\psi_{n,BL}^{(3)}\right|^2+\hat{n}^2\left|\psi_{n,BL}''\right|^2+\hat{n}^4\left|\psi_{n,BL}'\right|^2+\hat{n}^6\left|\psi_{n,BL}\right|^2\,dy
\le C\inte |\BF_n|^2\,dy.
\end{aligned}
\end{equation}

It follows from \eqref{5-3-73}-\eqref{5-3-76} and \eqref{5-3-86} that one has
\begin{equation}\label{5-3-90}
|b_n|^2\inte \hat{n}^2|\psi_{n,e}|^2+|\psi_{n,e}'|^2\,dy \le C|\Phi\hat{n}|^{-\frac53}\inte |\BF_n|^2\,dy
\end{equation}
and
\begin{equation}\label{5-3-91}
|b_n|^2\inte \left|\psi_{n,e}^{(3)}\right|^2+\hat{n}^2|\psi_{n,e}''|^2+\hat{n}^4|\psi_{n,e}'|^2+\hat{n}^6|\psi_{n,e}|^2\,dy \leq C|\Phi\hat{n}|^{-\frac23}\inte |\BF_n|^2\,dy.
\end{equation}
Meanwhile, the straightforward computations yield
\begin{equation}\label{5-3-92}\begin{aligned}
|a_n|^2\inte \hat{n}^2|\psi_{n,p}|^2+|\psi_{n,p}'|^2\,dy
\le &C|a_n|^2\inte \hat{n}^2|e^{\hat{n}y}+e^{-\hat{n}y}|^2+\hat{n}^2|e^{\hat{n}y}-e^{-\hat{n}y}|^2\,dy\\
\le &C|a_n|^2\inte \hat{n}^2(e^{2\hat{n}y}+e^{-2\hat{n}y})\,d y\\
\le &C|\Phi\hat{n}|^{-\frac43}\inte |\BF_n|^2\,dy
\end{aligned}\end{equation}
and
\be \label{5-3-93}\ba
|a_n|^2 \inte \left| \psi_{n,p}^{(3)}\right|^2 +\hat{n}^2|\psi_{n,p}''|^2 + \hat{n}^4 |\psi_{n,p}'|^2 + \hat{n}^6| \psi_{n,p} |^2 \,dy
\leq C\inte |\BF_n|^2\,dy.
\ea\ee
Finally, if follows from Lemma \ref{slip-est} that
\begin{equation}\label{5-3-94}\ba
~~|a_n|^2\inte \hat{n}^2|\psi_{n,r}|^2+|\psi_{n,r}'|^2\,dy
\leq &~~ C|a_n|^2 |\Phi\hat{n}|^{-\frac53}\inte \left|i\hat{n} U''(y)\psi_{n,p}\right|^2\,dy\\
\leq&~~ C|\Phi\hat{n}|^{-\frac43}\inte |\BF_n|^2\,dy
\ea\end{equation}
and
\begin{equation}\label{5-3-95}\ba
&~~|a_n|^2\inte \left|\psi_{n,r}^{(3)}\right|^2+\hat{n}^2|\psi_{n,r}''|^2+\hat{n}^4|\psi_{n,r}'|^2+\hat{n}^6|\psi_{n,r}|^2\,dy \\
\leq&~~ C|a_n|^2 |\Phi\hat{n}|^{-\frac23}\inte \left|i\hat{n} U''(y)\psi_{n,p}\right|^2\,dy\\
\leq&~~ C|\Phi\hat{n}|^{-\frac13}\inte |\BF_n|^2\,dy.
\ea\end{equation}
Combining the estimates \eqref{5-3-88}-\eqref{5-3-95} and Lemma \ref{slip-est} gives the estimates \eqref{est5-6} and \eqref{est5-7}.

{\it Step 5. The case with general $f_n$.}  From Steps 1-4, if $f_n$ is an even function,  we can construct the solution $\psi_n$  to the problem \eqref{stream} and \eqref{streamBC} in the form of \eqref{decompose} with $a_n^o=b_n^o=0$. Similarly,
if $f_n$ is an odd function, one can construct the solution $\psi_n$  to the problem \eqref{stream} and \eqref{streamBC} in the form of \eqref{decompose} with
$a_n^e=b_n^e=0$.
Since the boundary conditions \eqref{recoverBC1}, \eqref{recoverBC-1} are still the same, one can also get the estimates \eqref{5-3-86}-\eqref{5-3-95} for $a_n^o$, $b_n^o$, $\psi_{n,BL}^o$, $\psi_{n,e}^o$,  and $\psi_{n,r}^o$ in the same way. Therefore, for a given general $f_n$, the solutions associated with $f_{n}^e$ and $f_{n}^o$ satisfy the estimates \eqref{est5-6}-\eqref{est5-7}. Hence the solution $\psi_n$ satisfies \eqref{est5-6}-\eqref{est5-7} so that  the proof of Proposition \ref{mediumstream} is completed.
\end{proof}

\begin{pro} \label{mediumv}
Assume that $\Phi\gg1$ and $1 \le |n| \le \epsilon_1L\sqrt{\Phi}$, where $\epsilon_1$ is the constant indicated in Proposition \ref{mediumstream}. The corresponding velocity field  $\Bv_n$  satisfies
\be \label{est5-16}
\|\Bv_n\|_{L^2(\Omega)} \leq C |\Phi\hat{n}|^{-\frac23} \|\BF_n \|_{L^2(\Omega)}
\ee
and
\be \label{est5-17}
\|\Bv_n\|_{H^2(\Omega)} \leq C_2  \|\BF_n \|_{L^2(\Omega)},
\ee
where $C$ and $C_2$ are uniform constants independent of $\Phi$, $n$, and $\BF_n$.
\end{pro}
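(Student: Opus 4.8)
The plan is to convert the stream-function estimates \eqref{est5-6}--\eqref{est5-7} of Proposition \ref{mediumstream} into the desired velocity bounds. Recall that the $n$-th mode of the velocity field is $\Bv_n=(v_{1,n}\Be_1+v_{2,n}\Be_2)e^{i\hat{n}x}$ with $v_{1,n}=-\psi_n'$ and $v_{2,n}=i\hat{n}\psi_n$. Since $|e^{i\hat{n}x}|=1$, Parseval's identity in the $x$-variable turns every Sobolev norm of $\Bv_n$ over $\Omega$ into a factor $2\pi L$ times a weighted $L^2(-1,1)$ norm of $\psi_n$; here each $\partial_y$ differentiates $\psi_n$, while each $\partial_x$ contributes a power of $\hat{n}$. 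The same computation applied to the force gives $\|\BF_n\|_{L^2(\Omega)}^2=2\pi L\inte|\BF_n|^2\,dy$, so the factor $2\pi L$ will cancel and the resulting constants will be independent of $L$.

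For \eqref{est5-16} I would simply note
\[
\|\Bv_n\|_{L^2(\Omega)}^2=2\pi L\inte |\psi_n'|^2+\hat{n}^2|\psi_n|^2\,dy
\]
and apply \eqref{est5-6}, which yields $\|\Bv_n\|_{L^2(\Omega)}^2\le C|\Phi\hat{n}|^{-\frac43}\|\BF_n\|_{L^2(\Omega)}^2$; taking square roots gives the decay rate $|\Phi\hat{n}|^{-\frac23}$. For \eqref{est5-17} I would enumerate the components of the $H^2$ norm: each term $\partial_x^a\partial_y^b\Bv_n$ with $a+b\le2$ contributes $2\pi L\inte\hat{n}^{2a}\left(|v_{1,n}^{(b)}|^2+|v_{2,n}^{(b)}|^2\right)\,dy$. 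Substituting $v_{1,n}=-\psi_n'$ and $v_{2,n}=i\hat{n}\psi_n$, the top-order contributions are precisely
\[
\inte |\psi_n^{(3)}|^2+\hat{n}^2|\psi_n''|^2+\hat{n}^4|\psi_n'|^2+\hat{n}^6|\psi_n|^2\,dy,
\]
which is bounded by $C\inte|\BF_n|^2\,dy$ through \eqref{est5-7}; all remaining lower-order terms are controlled by the same right-hand side using \eqref{est5-6} and \eqref{est5-7} together with the constraints $|n|\ge1$ and $\Phi\gg1$.

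The only component not read off directly from \eqref{est5-7} is $\inte|\psi_n''|^2\,dy$, arising from $\partial_y v_{1,n}=-\psi_n''$. I would control it by interpolating between \eqref{est5-6} and \eqref{est5-7} via Lemma \ref{lemmaA2}, e.g.
\[
\inte|\psi_n''|^2\,dy\le C\inte|\psi_n'|^2\,dy+C\left(\inte|\psi_n'|^2\,dy\right)^{\frac12}\left(\inte|\psi_n^{(3)}|^2\,dy\right)^{\frac12}\le C\inte|\BF_n|^2\,dy.
\]
Collecting all contributions and cancelling $2\pi L$ against $\|\BF_n\|_{L^2(\Omega)}^2$ then gives $\|\Bv_n\|_{H^2(\Omega)}^2\le C\|\BF_n\|_{L^2(\Omega)}^2$, which is \eqref{est5-17}. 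The argument is essentially mechanical; the point requiring care is the complete bookkeeping of the $\hat{n}$-powers, so as to confirm that no term escapes the reach of \eqref{est5-6}--\eqref{est5-7}, and, most importantly, the observation that $C_2$ is genuinely uniform in $\Phi$ and $n$. This uniformity rests entirely on the fact that \eqref{est5-7} already carries a $\Phi$-independent constant, which is the substantive input supplied by Proposition \ref{mediumstream}; the translation carried out here is the routine part.
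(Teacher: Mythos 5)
Your proposal is correct, and for the $L^2$ bound \eqref{est5-16} it coincides with the paper's argument (Parseval in $x$ plus \eqref{est5-6}). For the $H^2$ bound \eqref{est5-17}, however, you take a genuinely different and more elementary route: the paper never enumerates the second derivatives, but instead uses the vorticity formulation $\Delta\Bv_n=\left(i\hat{n}\omega_n,-\frac{d}{dy}\omega_n\right)^Te^{i\hat{n}x}$ from Lemma \ref{reg-velocity} and invokes elliptic regularity with homogeneous Dirichlet data, which reduces everything to the two quantities $\|\Bv_n\|_{L^2(\Omega)}$ and $\|\nabla(\omega_ne^{i\hat{n}x})\|_{L^2(\Omega)}$; the latter is exactly the combination $\inte|\psi_n^{(3)}|^2+\hat{n}^2|\psi_n''|^2+\hat{n}^4|\psi_n'|^2+\hat{n}^6|\psi_n|^2\,dy$ controlled by \eqref{est5-7}, so the troublesome unweighted term $\inte|\psi_n''|^2\,dy$ never appears there. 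Your direct bookkeeping buys self-containedness (no appeal to elliptic regularity theory) at the price of handling that one extra term, and your treatment of it is sound, though the inequality you invoke is not literally Lemma \ref{lemmaA2} (which bounds $\inte|g^{(3)}|^2$ by $\inte|g^{(4)}|^2$ and $\inte|g''|^2$): the clean justification is estimate \eqref{A1-2} of Lemma \ref{lemmaA1} applied to $g=\psi_n'$, admissible because \eqref{streamBC} gives $\psi_n'(\pm1)=0$, after which $|\Phi\hat{n}|\gtrsim 1$ closes the estimate. Even more directly, since $|n|\ge1$ and $L\le L_0$ imply $|\hat{n}|\ge 1/L_0$, one has $\inte|\psi_n''|^2\,dy\le L_0^2\inte\hat{n}^2|\psi_n''|^2\,dy$, which \eqref{est5-7} controls outright, so the interpolation can be skipped entirely; this same observation is what justifies your absorption of all the other lower-order terms (e.g. $\hat{n}^2|\psi_n'|^2$ and $\hat{n}^4|\psi_n|^2$) into \eqref{est5-7}. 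With that small repair your argument is complete, and the constants are uniform in $\Phi$, $n$, and $L$, as required.
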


\begin{proof}
As shown in the proof of Lemma \ref{reg-velocity}, the solution $\Bv_n$ satisfies the elliptic equation \eqref{vorticity}. Applying the regularity theory for the elliptic equation with homogeneous boundary conditions (\cite{GT}) gives
\be \label{5-3-96}
\| \Bv_n\|_{H^2(\Omega) } \leq C \left\|\nabla(\omega_n e^{i\hat{n}x})\right\|_{L^2(\Omega)}+\| \Bv_n\|_{L^2(\Omega) } .
\ee
It follows from Proposition \ref{mediumstream} that one has
\be \label{5-3-97}
\|\Bv_n\|_{L^2(\Omega)}^2 \leq C L\inte |\psi_n'|^2+\hat{n}^2|\psi_n|^2 \,dy
\leq C |\Phi\hat{n}|^{-\frac43}\|\BF_n\|_{L^2(\Omega)}^2
\ee
and
\be \label{5-3-98}
\left\|\nabla(\omega_n e^{i\hat{n}x})\right\|_{L^2(\Omega)}^2 \leq
CL\inte \left|\psi_n^{(3)}\right|^2+\hat{n}^2|\psi_n''|^2+\hat{n}^4|\psi_n'|^2
+\hat{n}^6|\psi_n|^2\,dy \leq C\|\BF_n\|^2_{L^2(\Omega)}.
\ee
The estimate \eqref{5-3-97} is exactly \eqref{est5-16}. Substituting \eqref{5-3-97}-\eqref{5-3-98} into \eqref{5-3-96} yields \eqref{est5-17}. This finishes the proof of the proposition.
\end{proof}

\subsection{Uniform estimate for the case with large flux and high frequency}
In this subsection, we give the uniform estimate for the solutions of \eqref{stream}-\eqref{streamBC} with respect to the flux $\Phi$ when the flux is large and the frequency is high.

\begin{pro}\label{highstream}
Assume that $|n|\ge \epsilon_1L\sqrt{\Phi}\ge 1$ for some constant $\epsilon_1\in (0,1)$. Let $\psi_n$ be a solution to the problem \eqref{stream}--\eqref{streamBC}, then one has
\begin{equation}\label{est5-18}\begin{aligned}
\inte \Phi|\hat{n}|(1-y^2)\left|\psi_n'\right|^2+\Phi|\hat{n}|^3(1-y^2)|\psi_n|^2\,dy&\\
+\inte |\psi_n''|^2+\hat{n}^2\left|\psi_n'\right|^2 +\hat{n}^4|\psi_n|^2\,dy&\le C|\hat{n}|^{-2}\inte |\BF_n|^2\,dy.
\end{aligned}\end{equation}
\end{pro}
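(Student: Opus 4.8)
The plan is to run the same real/imaginary energy splitting used in Lemma \ref{basic-est1}, but now to exploit the frequency condition $\hat{n}^2\ge \epsilon_1^2\Phi$ in order to beat the convective coupling. First I would multiply \eqref{stream} by $\overline{\psi_n}$ and integrate over $(-1,1)$, separating real and imaginary parts exactly as in \eqref{2-2-4} and \eqref{2-2-6}. The imaginary part, combined with the Hardy--type inequality \eqref{2-2-7-0} of Lemma \ref{lemmaHLP} (available since $L\le L_0$ and $|n|\ge 1$), yields
\[
\Phi|\hat{n}|\inte (1-y^2)|\psi_n'|^2+\hat{n}^2(1-y^2)|\psi_n|^2\,dy\le C\left|\inte f_n\overline{\psi_n}\,dy\right|,
\]
which already produces the two weighted terms on the left of \eqref{est5-18}. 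Writing $I:=\left|\inte f_n\overline{\psi_n}\,dy\right|$ and integrating by parts, $\inte f_n\overline{\psi_n}\,dy=\inte (i\hat{n} F_{2,n}\overline{\psi_n}+F_{1,n}\overline{\psi_n'})\,dy$, and using the elementary bounds $\|\psi_n\|_{L^2}\le \hat{n}^{-2}A^{1/2}$ and $\|\psi_n'\|_{L^2}\le \hat{n}^{-1}A^{1/2}$, where $A:=\inte |\psi_n''|^2+\hat{n}^2|\psi_n'|^2+\hat{n}^4|\psi_n|^2\,dy$, I obtain the crucial gain $I\le C\hat{n}^{-1}\|\BF_n\|_{L^2}A^{1/2}$.

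The real part \eqref{2-2-4} reads $A\le I+\big|\Im\inte \hat{n} U'\psi_n'\overline{\psi_n}\,dy\big|$, so everything reduces to controlling the convective term. Since $U'=\tfrac34\Phi(1-y^2)'$ and $(1-y^2)$ vanishes at $y=\pm1$, one integration by parts gives
\[
\Im\inte \hat{n} U'\psi_n'\overline{\psi_n}\,dy=-\tfrac34\Phi\hat{n}\,\Im\inte (1-y^2)\psi_n''\overline{\psi_n}\,dy,
\]
so that $\big|\Im\inte \hat{n} U'\psi_n'\overline{\psi_n}\,dy\big|\le \tfrac34\Phi|\hat{n}|\big(\inte(1-y^2)|\psi_n''|^2\,dy\big)^{1/2}\big(\inte(1-y^2)|\psi_n|^2\,dy\big)^{1/2}$. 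The second factor is already controlled by the weighted first--order estimate above, namely $\inte(1-y^2)|\psi_n|^2\,dy\le C I/(\Phi|\hat{n}|^3)$. The remaining ingredient is a weighted second--order estimate $\inte(1-y^2)|\psi_n''|^2\,dy\le C I/(\Phi|\hat{n}|)$; granting it, the convective term is bounded by $C|\hat{n}|^{-1}I$, whence $A\le (1+C|\hat{n}|^{-1})I\le CI\le C\hat{n}^{-1}\|\BF_n\|_{L^2}A^{1/2}$, and therefore $A\le C\hat{n}^{-2}\|\BF_n\|_{L^2}^2$. Feeding this back into the weighted bound then finishes \eqref{est5-18}.

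To obtain the weighted second--order estimate I would test \eqref{stream} against $\overline{\psi_n''}$ and take the imaginary part, mimicking the computation leading to \eqref{5-3-43} in the slip case; here the convective boundary contributions vanish because $U(\pm1)=0$, and the stray term $\tfrac{3\Phi|\hat{n}|}{2}\inte |\psi_n'|^2\,dy$ is reabsorbed through Lemma \ref{lemmaHLP} together with the frequency relation $\hat{n}^2\ge \epsilon_1^2\Phi$, which converts the surplus powers of $\Phi$ into powers of $\hat{n}$. \emph{The main obstacle is precisely this step:} unlike the slip problem, $\psi_n''(\pm1)\neq 0$, so testing against $\overline{\psi_n''}$ leaves the biharmonic boundary term $\Im[\psi_n^{(3)}\overline{\psi_n''}]_{-1}^{1}$, and the heart of the argument is to show it is harmless, e.g. by inserting the weight $(1-y^2)$ into the test function to annihilate the boundary contribution and then tracking the lower--order weighted quantities that the weight derivatives generate. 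This is the only genuinely delicate point; once the weighted second--order estimate is secured, the rest is bookkeeping with Young's inequality and the frequency condition $\hat{n}^2\ge\epsilon_1^2\Phi$.
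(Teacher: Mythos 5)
Your opening steps coincide with the paper's: the weighted first-order bound (the paper's \eqref{5-4-2}) obtained from the imaginary part \eqref{2-2-6} plus Lemma \ref{lemmaHLP}, and the gain $I:=\left|\inte f_n\overline{\psi_n}\,dy\right|\le C|\hat{n}|^{-1}\|\BF_n\|_{L^2}A^{1/2}$ with $A:=\inte |\psi_n''|^2+\hat{n}^2|\psi_n'|^2+\hat{n}^4|\psi_n|^2\,dy$, are exactly what the paper does. The divergence, and the gap, is in the convective term. You integrate by parts to rewrite it as $-\tfrac34\Phi\hat{n}\,\Im\inte(1-y^2)\psi_n''\overline{\psi_n}\,dy$ (this identity is correct), but then your argument hinges on the weighted second-order estimate $\inte(1-y^2)|\psi_n''|^2\,dy\le CI/(\Phi|\hat{n}|)$, which you never prove. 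This is not a routine missing step. Testing \eqref{stream} against $\overline{\psi_n''}$ produces the boundary term $\Im\,[\psi_n^{(3)}\overline{\psi_n''}]_{-1}^{1}$, absent in the slip computation \eqref{5-3-43} precisely because there $\psi_{n,s}''(\pm1)=0$; and your proposed remedy of testing against $(1-y^2)\overline{\psi_n''}$ kills that boundary term only at the cost of new terms that are uncontrolled at this stage: the biharmonic part leaves $\Im\inte y\,\psi_n^{(3)}\overline{\psi_n''}\,dy$ (needing the third derivative), and pairing $f_n=i\hat{n}F_{2,n}-\tfrac{d}{dy}F_{1,n}$ with the weighted test function leaves, after integration by parts, $\inte F_{1,n}(1-y^2)\overline{\psi_n^{(3)}}\,dy$ and $\inte y\,F_{1,n}\overline{\psi_n''}\,dy$, i.e.\ a weighted third derivative and the unweighted $\|\psi_n''\|_{L^2}$, which is part of the quantity $A$ you are trying to bound. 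Moreover, even granting all of that, the natural right-hand side of such an energy identity is of the form $\left|\inte f_n(1-y^2)\overline{\psi_n''}\,dy\right|$ rather than the quantity $I$ your bookkeeping requires, so a further absorption argument would still be needed. You flag this step honestly as the delicate point, but as it stands the proof is conditional on an unproved estimate whose difficulties are of the same order as the proposition itself.

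The paper closes the argument without ever needing second-order weighted information. In \eqref{5-4-3} the convective term $\Phi|\hat{n}|\inte|y\,\psi_n'\overline{\psi_n}|\,dy$ is treated with no integration by parts: Cauchy--Schwarz gives $\Phi|\hat{n}|\,\|\psi_n'\|_{L^2}\|\psi_n\|_{L^2}$, and then the weighted interpolation Lemma \ref{weightinequality} is applied to each unweighted norm, converting it into a weighted norm (controlled by \eqref{5-4-2}) plus a small power of the next derivative ($\|\psi_n''\|_{L^2}$, respectively $\|\hat{n}\psi_n'\|_{L^2}$), which Young's inequality absorbs into the left side of \eqref{5-4-1}; the leftover factors $\Phi^{1/2}|\hat{n}|^{-1}$ and $\Phi^{1/5}|\hat{n}|^{-1}$ are bounded by a constant exactly because $|\hat{n}|\ge\epsilon_1\sqrt{\Phi}$. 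If you wish to salvage your write-up, the shortest repair is to replace your integration by parts with this direct interpolation step, after which only the first-order weighted estimate you already have is required.
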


\begin{proof}
It follows from \eqref{2-2-4}, \eqref{2-2-6}, Lemma \ref{lemmaHLP} and integration by parts that one has
\begin{equation}\label{5-4-1}
\inte |\psi_n''|^2+ \hat{n}^2\left|\psi_n'\right|^2+ \hat{n}^4|\psi_n|^2\,dy
\le C\left| \int_{-1}^1 f_n \overline{\psi}_n  \, dy  \right| +C\Phi |\hat{n}|\inte \left|y\psi_n'\bar{\psi}_n\right|\,dy
\end{equation}
and
\begin{equation}\label{5-4-2}
\begin{aligned}
\Phi |\hat{n}|\inte \left|\psi_n\right|^2+ \left|\psi_n'\right|^2(1-y^2)+\hat{n}^2 |\psi_n|^2(1-y^2)\,dy
\le C\left| \int_{-1}^1 f_n \overline{\psi}_n \, dy  \right| .
\end{aligned}\end{equation}
Herein,
\be\nonumber
\ba
& \Phi |\hat{n}| \int_{-1}^1 | y \psi_n' \bar{\psi}_n| \, dy \\
\leq & C \Phi |\hat{n}| \left( \int_{-1}^1 |\psi_n'|^2 \, dy  \right)^{\frac12} \left( \int_{-1}^1 |\psi_n|^2 \, dy \right)^{\frac12} \\
\leq & C \Phi |\hat{n}| \left[\left( \int_{-1}^1 |\psi_n''|^2 \, dy \right)^{\frac16} \left( \int_{-1}^1 (1 - y^2) |\psi_n'|^2 \, dy \right)^{\frac13}+\left( \int_{-1}^1 (1 - y^2) |\psi_n'|^2 \, dy \right)^{\frac12}\right]\\
&\ \ \ \ \ \ \ \cdot \left[\left( \int_{-1}^1 |\psi_n'|^2 \, dy \right)^{\frac16} \left(  \int_{-1}^1 (1 - y^2) |\psi_n|^2 \, dy \right)^{\frac13} +\left( \int_{-1}^1 (1 - y^2) |\psi_n|^2 \, dy \right)^{\frac12}\right]\\
\leq & C \Phi |\hat{n}| \left[\left( \int_{-1}^1 |\psi_n''|^2 \, dy \right)^{\frac16} \left( \Phi|\hat{n}|\right)^{-\frac13}\left| \int_{-1}^1 f_n \overline{\psi}_n \, dy  \right|^\frac13+\left( \Phi|\hat{n}|\right)^{-\frac12}\left| \int_{-1}^1 f_n \overline{\psi}_n \, dy  \right|^\frac12\right]\\
&\ \ \ \ \ \ \ \cdot \left[\left( \int_{-1}^1 \hat{n}^2 |\psi_n'|^2 \, dy \right)^{\frac16}\left( \Phi|\hat{n}|^4\right)^{-\frac13}\left| \int_{-1}^1 f_n \overline{\psi}_n \, dy  \right|^\frac13+\left( \Phi|\hat{n}|^3\right)^{-\frac12}\left| \int_{-1}^1 f_n \overline{\psi}_n \, dy  \right|^\frac12\right].
\ea\ee
Using H\"{o}lder inequalities gives
\be \label{5-4-3}
\ba
& \Phi |\hat{n}| \int_{-1}^1 | y \psi_n' \bar{\psi}_n| \, dy \\
\leq & \frac14 \int_{-1}^1 |\psi_n''|^2 \, dy + \frac14 \int_{-1}^1 \hat{n}^2 |\psi_n'|^2 \, dy +
C (\Phi^{\frac12} |\hat{n}|^{-1}+\Phi^{\frac15} |\hat{n}|^{-1}+|\hat{n}|^{-1}) \left|\int_{-1}^1 f_n \bar{\psi}_n \, dy  \right|\\
\leq & \frac14 \int_{-1}^1 |\psi_n''|^2 \, dy + \frac14 \int_{-1}^1 \hat{n}^2 |\psi_n'|^2 \, dy +
C \left|\int_{-1}^1 f_n \bar{\psi}_n \, dy  \right|.
\ea
\ee
Taking \eqref{5-4-3} into \eqref{5-4-1} yields
\be \nonumber
\ba
& \inte |\psi_n''|^2+ \hat{n}^2\left|\psi_n'\right|^2+ \hat{n}^4|\psi_n|^2\,dy\le  C\left|\int_{-1}^1 f_n \bar{\psi}_n \, dy  \right| \\
\leq & C|\hat{n}|^{-1}  \left( \int_{-1}^1 |\BF_n|^2 \, dy  \right)^{\frac12} \left( \int_{-1}^1 \hat{n}^4 |\psi_n|^2 + \hat{n}^2|\psi_n'|^2 \, dy \right)^{\frac12}.
\ea
\ee
By Young's inequality,
\be \label{5-4-5}
 \inte |\psi_n''|^2+ \hat{n}^2\left|\psi_n'\right|^2+ \hat{n}^4|\psi_n|^2\,dy
\leq C |\hat{n}|^{-2} \int_{-1}^1 |\BF_n|^2 \, dy .
\ee
Substituting \eqref{5-4-5} into \eqref{5-4-2} gives the inequality \eqref{est5-18}. This finishes the proof of the proposition.
\end{proof}

\begin{pro}\label{highv}
Assume that $|n|\ge \epsilon_1L\sqrt{\Phi}\ge 1$ for some constant $\epsilon_1\in (0,1)$. The solution $\Bv_n$ satisfies
\begin{equation}\nonumber
\|\Bv_n\|_{L^2(\Omega)}\le C\Phi^{-1}\|\BF_n\|_{L^2(\Omega)},
\end{equation}
\begin{equation}\label{est5-21}
\|\Bv_n\|_{H^2(\Omega)}\le C(1+\Phi^\frac14)\|\BF_n\|_{L^2(\Omega)},
\end{equation}
and
\begin{equation}\nonumber
\|\Bv_n\|_{H^\frac53(\Omega)}\le C_3\|\BF_n\|_{L^2(\Omega)},
\end{equation}
for some positive constants $C$ and $C_3$ independent of flux $\Phi$, $n$, and $\BF_n$.
\end{pro}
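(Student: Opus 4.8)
The plan is to deduce all three bounds from the single stream-function estimate \eqref{est5-18} of Proposition \ref{highstream}, combined with the Stokes regularity theory already used in Proposition \ref{smallflux}, and to exploit the standing hypothesis $|\hat n|\ge\epsilon_1\sqrt\Phi$ at every step. First I would record the elementary consequences of \eqref{est5-18}. Since
\[
\|\Bv_n\|_{L^2(\Omega)}^2\sim L\inte |\psi_n'|^2+\hat n^2|\psi_n|^2\,dy,\qquad \|\Bv_n\|_{H^1(\Omega)}^2\sim L\inte |\psi_n''|^2+\hat n^2|\psi_n'|^2+\hat n^4|\psi_n|^2\,dy,
\]
dividing the non-weighted part of \eqref{est5-18} by the appropriate powers of $\hat n$ gives $\|\Bv_n\|_{L^2(\Omega)}\le C|\hat n|^{-2}\|\BF_n\|_{L^2(\Omega)}$ and $\|\Bv_n\|_{H^1(\Omega)}\le C|\hat n|^{-1}\|\BF_n\|_{L^2(\Omega)}$. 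The bound $|\hat n|\ge\epsilon_1\sqrt\Phi$ then converts the first inequality into $\|\Bv_n\|_{L^2(\Omega)}\le C\Phi^{-1}\|\BF_n\|_{L^2(\Omega)}$, which is the desired $L^2$ estimate, while the second will serve as one endpoint for the interpolation below.

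For the $H^2$ bound I would use the Stokes system \eqref{stokes} satisfied by $\Bv_n$, treating $U\partial_x\Bv_n$ and $v_{2,n}e^{i\hat n x}U'$ as forcing terms, and apply the Stokes regularity estimate in the form preceding \eqref{est-stokes}, namely
\[
\|\Bv_n\|_{H^2(\Omega)}\le C\|\BF_n\|_{L^2(\Omega)}+C\|U\partial_x\Bv_n\|_{L^2(\Omega)}+C\|v_{2,n}e^{i\hat n x}U'\|_{L^2(\Omega)}+C\|\Bv_n\|_{H^1(\Omega)}.
\]
The decisive point is that I must keep the factor $(1-y^2)$ inside $U$ rather than bounding $|U|$ by $C\Phi$. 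Writing $U^2=\frac{9}{16}\Phi^2(1-y^2)^2$, using $(1-y^2)^2\le(1-y^2)$, and feeding in the \emph{weighted} part of \eqref{est5-18} yields $\inte (1-y^2)(|\psi_n'|^2+\hat n^2|\psi_n|^2)\,dy\le C\Phi^{-1}|\hat n|^{-3}\inte|\BF_n|^2\,dy$, so that
\[
\|U\partial_x\Bv_n\|_{L^2(\Omega)}^2\le C\Phi^2\hat n^2\cdot\Phi^{-1}|\hat n|^{-3}\|\BF_n\|_{L^2(\Omega)}^2=C\Phi|\hat n|^{-1}\|\BF_n\|_{L^2(\Omega)}^2\le C\Phi^{\frac12}\|\BF_n\|_{L^2(\Omega)}^2.
\]
Together with the routine bounds $\|v_{2,n}e^{i\hat n x}U'\|_{L^2(\Omega)}\le C\Phi|\hat n|^{-2}\|\BF_n\|_{L^2(\Omega)}\le C\|\BF_n\|_{L^2(\Omega)}$ and $\|\Bv_n\|_{H^1(\Omega)}\le C\|\BF_n\|_{L^2(\Omega)}$ from the first step, this gives exactly $\|\Bv_n\|_{H^2(\Omega)}\le C(1+\Phi^{\frac14})\|\BF_n\|_{L^2(\Omega)}$, i.e.\ \eqref{est5-21}.

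Finally, for the $H^{5/3}$ bound I would interpolate between $H^1$ and $H^2$ (not between $L^2$ and $H^2$), using $H^{5/3}=[H^1,H^2]_{2/3}$:
\[
\|\Bv_n\|_{H^{5/3}(\Omega)}\le C\|\Bv_n\|_{H^1(\Omega)}^{\frac13}\|\Bv_n\|_{H^2(\Omega)}^{\frac23}\le C|\hat n|^{-\frac13}(1+\Phi^{\frac14})^{\frac23}\|\BF_n\|_{L^2(\Omega)}.
\]
Since $|\hat n|^{-\frac13}\le\epsilon_1^{-\frac13}\Phi^{-\frac16}$ and $(1+\Phi^{\frac14})^{\frac23}\le C(1+\Phi^{\frac16})$, the powers of $\Phi$ cancel exactly, $\Phi^{-\frac16}\cdot\Phi^{\frac16}=1$, leaving a uniform constant $C_3$.

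The main obstacle is the $H^2$ estimate, and specifically the control of $\|U\partial_x\Bv_n\|_{L^2}$: the crude bound $|U|\le C\Phi$ would only produce $\Phi^{\frac12}$, and it is precisely the vanishing of $U$ at the walls, captured by the weighted terms in \eqref{est5-18}, that upgrades this to the sharp $\Phi^{\frac14}$. This improvement is not a cosmetic refinement but exactly what makes the $H^{5/3}$ interpolation uniform, since the $\frac16$ gained from the high-frequency factor $|\hat n|^{-\frac13}$ only just balances the $\frac16$ lost from $(\Phi^{\frac14})^{\frac23}$.
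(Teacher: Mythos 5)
Your proposal is correct and follows essentially the same route as the paper: the $L^2$ and $H^1$ bounds come from the unweighted part of \eqref{est5-18} combined with $|\hat n|\ge\epsilon_1\sqrt\Phi$, the $H^2$ bound comes from Stokes regularity for \eqref{stokes} where the weighted (degenerate) part of \eqref{est5-18} controls $\Phi\|(1-y^2)\partial_x\Bv_n\|_{L^2}$ by $C\Phi^{1/4}\|\BF_n\|_{L^2}$, and the uniform $H^{5/3}$ bound follows from the same $H^1$--$H^2$ interpolation with the identical cancellation $\Phi^{-1/6}\cdot\Phi^{1/6}=1$. Your remark that keeping the factor $(1-y^2)$ inside $U$ (rather than using $|U|\le C\Phi$) is the decisive point is exactly the mechanism of the paper's estimate \eqref{5-4-9}.
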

\begin{proof}First, by virtue of \eqref{est5-18}, one has
\begin{equation}\nonumber
\begin{aligned}
\|\Bv_n\|_{L^2(\Omega)}\le&CL^\frac12\left(\inte \hat{n}^2|\psi_n|^2+\left|\psi_n'\right|^2 \,dy\right)^\frac12\\
\le&C\hat{n}^{-2}L^\frac12\left(\inte |\BF_n|^2\,dy\right)^\frac12\le C\Phi^{-1}\|\BF_n\|_{L^2(\Omega)}.
\end{aligned}
\end{equation}
Using \eqref{est-stokes} yields
\begin{equation}\label{5-4-8}
\begin{aligned}
\|\Bv_n \|_{H^2(\Omega)}\le& C(\|\BF_n\|_{L^2(\Omega)} +\Phi \|(1 - y^2) \partial_x \Bv_n\|_{L^2(\Omega)} + \Phi \|v_{2,n}e^{i\hat{n} x}\|_{L^2(\Omega)} +  \|\Bv_n\|_{H^1(\Omega)}).
\end{aligned}\end{equation}
It follows from \eqref{est5-18} that one has
\begin{equation}\label{5-4-9}
\begin{aligned}
&\Phi\|(1-y^2)\partial_x \Bv_n \|_{L^2(\Omega)}\\
\le& C\Phi L^\frac12\left(\inte \hat{n}^4(1-y^2)^2|\psi_n|^2+\hat{n}^2(1-y^2)^2\left|\psi_n'\right|^2\,dy\right)^\frac12\\
\le&C|\Phi\hat{n}|^\frac12L^\frac12\left(\inte \Phi|\hat{n}|^3(1-y^2)|\psi_n|^2+\Phi|\hat{n}|(1-y^2)\left|\psi_n'\right|^2\,dy\right)^\frac12\\
\le&C\Phi^\frac12|\hat{n}|^{-\frac12}L^\frac12\left(\inte |\BF_n|^2\,dy\right)^\frac12\le C\Phi^\frac14\|\BF_n\|_{L^2(\Omega)}
\end{aligned}\end{equation}
and
\begin{equation}\label{5-4-10}
\begin{aligned}
\Phi\|v_{2,n}e^{i\hat{n}x}\|_{L^2(\Omega)}\le& C\Phi\|\Bv_n\|_{L^2(\Omega)}\le C\|\BF_n\|_{L^2(\Omega)}.
\end{aligned}\end{equation}
By Poincar\'{e}'s inequality and Proposition \ref{highstream}, one has
\begin{equation}\label{5-4-11}
\begin{aligned}
\|\Bv_n \|_{H^1(\Omega)}\le& C\|\nabla\Bv_n \|_{L^2(\Omega)}
\le CL^\frac12\left(\inte \hat{n}^4|\psi_n|^2+\hat{n}^2\left|\psi_n'\right|^2 +|\psi_n''|^2\,dy\right)^\frac12\\
\le& C \Phi^{-\frac12}\|\BF_n\|_{L^2(\Omega)}.
\end{aligned}\end{equation}
Combining \eqref{5-4-8}-\eqref{5-4-11} yields \eqref{est5-21}. Finally, using interpolation between $H^2(\Omega)$ and $H^1(\Omega)$ gives
\begin{equation}\nonumber
\|\Bv_n \|_{H^\frac53(\Omega)}\le C\|\Bv_n \|_{H^1(\Omega)}^\frac13\|\Bv_n \|_{H^2(\Omega)}^\frac23\le C_2\|\BF_n \|_{L^2(\Omega)}.
\end{equation}
Hence the proof of the proposition is completed.
\end{proof}
Combining Propositions \ref{smallflux}, \ref{0mode}, \ref{mediumv}, and \ref{highv} together and using density argument give the existence of the solutions for the problem \eqref{model12} and \eqref{model11'}. This, together with the a priori estimates obtained in Propositions \ref{smallflux}, \ref{0mode}, \ref{mediumv}, and \ref{highv} finishes the proof of Theorem \ref{thm1}.


\section{The nonlinear problem}\label{sec-nonlinear}
In this section, we prove the existence and uniqueness of  solution to the nonlinear
problem \eqref{model11}-\eqref{model11'}. As proved in Theorem \ref{thm1}, for any  external force $\BF(x,y)\in L^2(\Omega)$ there exists a solution $\Bv\in H^2(\Omega)$ to the linearized problem \eqref{model12} and \eqref{model11'}. Denote $\Bv=\T(\BF)$.

Set $Y=H^\frac53(\Omega)$. $\T(\BF)$ satisfies
\begin{equation}\nonumber
\|\T(\BF)\|_{Y}\le C\|\BF\|_{L^2(\Omega)}.
\end{equation}
For any $\Bz_1,\Bz_2 \in Y$, the bilinear form
\begin{equation}\nonumber
\mathcal{B}(\Bz_1,\Bz_2):=\mathcal{T}(-\Bz_1\cdot\nabla\Bz_2),
\end{equation}
is well defined and $\mathcal{B}(\Bz_1,\Bz_2)$ satisfying
\begin{equation}\nonumber
\|\B(\Bz_1,\Bz_2)\|_{Y}\le C\|\Bz_1\|_{L^{12}(\Omega)}\|\nabla\Bz_2\|_{L^\frac{12}{5}(\Omega)}\le C\|\Bz_1\|_{Y}\|\Bz_2\|_{Y}.
\end{equation}
Hence it follows from Lemma \ref{lemmaA6} that if
$\|\BF\|_{L^2(\Omega)}\le \varepsilon$
for some sufficiently small $\varepsilon$, the equation
\begin{equation}\nonumber
\Bv=\T(\BF)+\B(\Bv,\Bv)
\end{equation}
has a unique solution $\Bv\in Y$ satisfying $\|\Bv\|_Y\le C\|\BF\|_{L^2(\Omega)}$. In fact, $\Bv$ is a strong solution satisfying that
\begin{equation}\nonumber
\|\Bv\|_{H^2(\Omega)}\le C(1+\Phi^\frac14)(\|\BF\|_{L^2(\Omega)}+\|\Bv\|_Y^2)\le C(1+\Phi^\frac14)\|\BF\|_{L^2(\Omega)}.
\end{equation}
This finishes the proof of part(a) of Theorem 1.2.

Next, let us consider the existence and uniqueness of strong solutions to \eqref{model11}-\eqref{model11'}, when the external force $\BF$ is large in $L^2(\Omega)$.  Let $\Bv$ be a periodic solution of the nonlinear system \eqref{model11}-\eqref{model11'}. Then for any $n\in \mathbb{Z}$, the stream function $\psi_n$ of $\Bv_n$ satisfies the following nonlinear system
\begin{equation}\label{model71}
\begin{aligned}
&-i\hat{n}U''\psi_n+i\hat{n}U\left(\frac{d^2}{dy^2}-\hat{n}^2\right)\psi_n-\left(\frac{d^2}{dy^2}-\hat{n}^2\right)^2\psi_n\\
=&i\hat{n}F_{2,n}-\frac{d}{dy}F_{1,n}-\frac{d}{dy}\left(\sum\limits_{m\in \mathbb{Z}} v_{2,n-m}\omega_{m}\right)-i\hat{n}\sum\limits_{m\in \mathbb{Z}} v_{1,n-m}\omega_{m},
\end{aligned}
\end{equation}
and the boundary conditions \eqref{streamBC}.

\begin{pro}
\label{largeF}
Assume that $\BF\in L^2(\Omega)$ and $L\leq L_0$. There exists a constant $\Phi_0\ge 1$ such that for every $\Phi\ge \Phi_0$, if
\begin{equation}\nonumber
\|\BF\|_{L^2(\Omega)}\le \Phi^\frac{1}{32},
\end{equation}
the nonlinear problem \eqref{model11}-\eqref{model11'} admits a unique solution $\Bv\in H^2(\Omega)$ satisfying
\begin{equation}\nonumber
\left(\left\|\Bv_0\right\|_{H^2(\Omega)}^2+\sum\limits_{n\neq0}\left\|\Bv_n\right\|_{H^\frac53(\Omega)}^2\right)^\frac12
\le C\|\BF\|_{L^2(\Omega)}
\end{equation}
and
\begin{equation}\nonumber
\|v_2\|_{L^2(\Omega)}\leq \Phi^{-\frac{7}{12}}, \quad
\left\|\Bv\right\|_{H^2(\Omega)}\le C\Phi^\frac14\|\BF\|_{L^2(\Omega)},
\end{equation}
where $C$ is a uniform constant independent of the flux $\Phi$ and $L$.
\end{pro}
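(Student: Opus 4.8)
The plan is to obtain the solution as a fixed point of the map $\mathcal{N}(\Bv):=\T(\BF-\Bv\cdot\nabla\Bv)$, where $\T$ is the linear solution operator furnished by Theorem \ref{thm1}, and to run the contraction in a space whose norm is adapted simultaneously to the Fourier decomposition and to the flux. Concretely, I would work on the closed set
\[
K:=\Big\{\,\Bv:\ \|\Bv\|_X\le 2C\|\BF\|_{L^2(\Omega)},\ \ \|v_2\|_{L^2(\Omega)}\le\Phi^{-\frac{7}{12}}\,\Big\},
\qquad
\|\Bv\|_X^2:=\|\Bv_0\|_{H^2(\Omega)}^2+\sum_{n\neq0}\|\Bv_n\|_{H^{5/3}(\Omega)}^2 .
\]
The bound $\|\T g\|_X\le C\|g\|_{L^2}$, uniform in $\Phi$, follows by assembling Propositions \ref{0mode}, \ref{mediumv} and \ref{highv} exactly as in the proof of Theorem \ref{thm1}, so the linear term contributes at most $C\|\BF\|_{L^2}$ to each quantity in the conclusion; in particular the vertical part of $\T\BF$ is of size $\Phi^{-2/3}\|\BF\|_{L^2}\le\Phi^{-61/96}\ll\Phi^{-7/12}$, leaving room in the $v_2$-constraint. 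I would then show $\mathcal{N}:K\to K$ and that $\mathcal{N}$ is a contraction in $\|\cdot\|_X$, so that Lemma \ref{lemmaA6} (Banach fixed point) applies.

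The heart of the matter is the nonlinear forcing, written as $\Bv\cdot\nabla\Bv=\mathrm{div}(\Bv\otimes\Bv)$ with tensor entries $v_1^2,\ v_1v_2,\ v_2^2$. First I would record the flux gain hidden in the localized estimates: interpolating the sharp $L^2$-bounds (factor $|\Phi\hat n|^{-2/3}$ for $1\le|n|\le\epsilon_1L\sqrt\Phi$, factor $\Phi^{-1}$ for $|n|\ge\epsilon_1L\sqrt\Phi$) against the uniform $H^2$-bound gives, on the intermediate range, $\|\Bv_n\|_{H^{5/3}}\le C|\Phi\hat n|^{-1/9}\|g_n\|_{L^2(I)}$, so that $\T$ is in fact smoothing by a negative power of $\Phi$ on the $X$-norm, while $v_2=\sum_{n\neq0}i\hat n\psi_n e^{i\hat n x}$ inherits the full $L^2$-decay. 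The three entries are then handled differently. Since $(v_1\partial_x v_1)_0=0$, the entire zeroth-mode forcing reduces to $(v_2\partial_y v_1)_0$, which carries a factor of the small $v_2$, so Proposition \ref{0mode} controls $\Bv_0$ by $\|F_{1,0}\|_{L^2}$ up to a term negligible in $\Phi$; and the entries $v_1v_2,\ v_2^2$ are small in every low-order norm because $\|v_2\|_{H^{1/2}}\lesssim\|v_2\|_{L^2}^{7/10}\|v_2\|_{H^{5/3}}^{3/10}$ is a negative power of $\Phi$.

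The main obstacle is the single entry $v_1^2$, which does not involve the small vertical velocity and whose $L^2$-size on $K$ is only $O(\Phi^{1/16})$. I would treat its vorticity contribution $-\partial_x\partial_y(v_1^2)$ to \eqref{model71} band by band. On $1\le|n|\le\epsilon_1L\sqrt\Phi$ the $|\Phi\hat n|^{-1/9}$ smoothing defeats the $\Phi^{1/16}$ growth precisely because the threshold $\|\BF\|_{L^2}\le\Phi^{1/32}$ is tuned so that $\Phi^{-1/9}\big(\Phi^{1/32}\big)^2=\Phi^{-7/144}\ll\Phi^{1/32}$. On $|n|\ge\epsilon_1L\sqrt\Phi$ there is no flux gain, but here I would use that $\Bv\otimes\Bv\in H^{5/3}\subset H^1$ (an algebra in two dimensions), so the high-frequency tail of $\partial_x(v_1^2)$ in $L^2$ is itself a negative power of $\Phi$, which compensates. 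The genuinely large $H^2$-norm of the solution is then recovered from the Stokes estimate \eqref{est-stokes} as in \eqref{5-4-8}--\eqref{5-4-11}, with the factor $\Phi^{1/4}$ produced by the term $\Phi\|(1-y^2)\partial_x\Bv\|_{L^2}$.

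Having established the self-map property, I would obtain the Lipschitz estimate by applying the same interpolated gains to $\Bv\cdot\nabla(\Bv-\boldsymbol{w})+(\Bv-\boldsymbol{w})\cdot\nabla\boldsymbol{w}$, producing a contraction constant of order $\Phi^{-1/9+1/32}<1$ for $\Phi\ge\Phi_0$. The fixed point theorem then yields a unique $\Bv\in K$, and the three asserted bounds are read off directly: the $X$-bound from the definition of $K$, the estimate $\|v_2\|_{L^2}\le\Phi^{-7/12}$ from the $L^2$-decay just described, and $\|\Bv\|_{H^2}\le C\Phi^{1/4}\|\BF\|_{L^2}$ from the Stokes estimate. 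Uniqueness in the stated class follows from the contraction, or equivalently by applying the linear a priori estimate of Theorem \ref{thm1} to the difference of two solutions and absorbing the quadratic terms for $\Phi$ large.
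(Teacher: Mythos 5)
Your self-map argument is essentially viable and genuinely different from the paper's: instead of propagating $L^2$-smallness of \emph{all} nonzero modes, you extract per-band gains (the factor $|\Phi\hat n|^{-1/9}$ on the intermediate band by interpolating Proposition \ref{mediumv}, and a tail gain of order $\Phi^{-1/3}$ on the high band from the extra $x$-regularity of $\mathrm{div}(\Bv\otimes\Bv)$), while the zeroth-mode forcing reduces to $(v_2\partial_y v_1)_0$, which carries your $v_2$-constraint. The genuine gap is in the contraction step, and it sits exactly where the paper has to work hardest. Your claimed contraction constant $\Phi^{-1/9+1/32}$ ignores the zeroth mode of the output: Proposition \ref{0mode} has a uniform constant with \emph{no} flux gain, and the zeroth-mode forcing of the difference equation contains $((v_2-w_2)\partial_y w_1)_0$. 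In your metric this term is only bounded by $\|v_2-w_2\|_{L^6}\|\Q \partial_y w_1\|_{L^3}\lesssim \|\Bv-\boldsymbol{w}\|_X\,\|\boldsymbol{w}\|_X\le C\Phi^{1/32}\|\Bv-\boldsymbol{w}\|_X$, an expanding rather than contracting coefficient. The only smallness available for $v_2-w_2$ is the shared constraint $\|v_2-w_2\|_{L^2}\le 2\Phi^{-7/12}$, but interpolating with it yields only a H\"older-$\frac35$ bound $\lesssim \Phi^{-\delta}\|\Bv-\boldsymbol{w}\|_X^{3/5}$; such a bound gives neither Banach fixed-point existence (Lemma \ref{lemmaA6} needs a genuine bilinear estimate together with $4\eta\|\T\BF\|_X<1$, which fails when $\|\BF\|_{L^2}\sim\Phi^{1/32}$) nor uniqueness, since the inequality $d\le \kappa d^{3/5}$ admits the nonzero solution $d=\kappa^{5/2}$.

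The gap is repairable, but by an idea you did not state. The paper's route is: (i) impose the $L^2$-smallness $\|\Q\Bv\|_{L^2}\le\Phi^{-7/12}$ on \emph{all} nonzero modes (both components, not just $v_2$), which the linear $L^2$-gains $|\Phi\hat n|^{-2/3}$ and $\Phi^{-1}$ propagate through an iteration scheme, and obtain existence by uniform bounds plus weak compactness, so no contraction is ever needed; (ii) for uniqueness, measure $\Q\tilde\Bv$ in the weaker $H^{3/2}$-norm, where interpolation against the linear $L^2$-gain produces the factor $\Phi^{-1/15}$ of \eqref{eq725} that beats $\Phi^{1/32}$, while the zero-mode difference \eqref{eq724} inherits smallness from $\|\Q\Bv^i\|_{L^2}$. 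Alternatively, within your framework one could run a two-tier difference estimate: first bound $\|\Q\tilde\Bv\|$ using your band gains (there the coefficient $\Phi^{1/32}$ is indeed beaten by $\Phi^{-1/9}$, resp.\ $\Phi^{-1/3}$), and only then feed the result into the zero mode, observing that $(v_1\partial_x\tilde v_1+\tilde v_1\partial_x w_1)_0$ vanishes as a perfect $x$-derivative, so the zero-mode forcing of the difference involves only $v_2$ and $\tilde v_2$, i.e.\ only nonzero modes, which now carry a gain. Either way, the single uniform contraction constant you assert does not exist, and this is precisely why the paper abandons the fixed-point formulation for this proposition in favor of iteration plus a carefully weighted uniqueness estimate.
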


\begin{proof}
The proof is divided into 4 steps. The existence of the solutions is established by the iteration method.

 {\em Step 1: Iteration scheme.}  Given $\BF\in L^2(\Omega)$, the linear problem \eqref{stream}-\eqref{streamBC} admits a unique solution $\psi_n^0$ for each $n$. The corresponding velocity field is denoted by
\begin{equation}\nonumber
\Bv^0=\sum\limits_{n\in\Z}v_{1,n}^0e^{i\hat{n}x}\Be_1+v_{2,n}^0e^{i\hat{n}x}\Be_2  \text{ with }
v^0_{2,n}=i\hat{n}\psi_n^0~~\text{ and }v^0_{1,n}=-\frac{d}{dy}\psi_n^0.
\end{equation}
For every $j\ge 0$, let $\psi^{j+1}_n$ be the solution to the iteration problem
\begin{equation}\label{model72}
\left\{\begin{aligned}
-i\hat{n}U''\psi_n^{j+1}&+i\hat{n}U\left(\frac{d^2}{dy^2}-\hat{n}^2\right)\psi^{j+1}_n-\left(\frac{d^2}{dy^2}-\hat{n}^2\right)^2\psi^{j+1}_n\\
=&i\hat{n} (F_{2,n}+F^j_{2,n})-\frac{d}{dy}(F_{1,n}+F_{1,n}^j),\\
\psi_n^{j+1}(\pm1)=&\frac{d}{dy}\psi_n^{j+1}(\pm1)=0,
\end{aligned}\right.
\end{equation}
where
\begin{equation}\nonumber
\begin{aligned}
F^j_{2,n}=&-\sum\limits_{m\in \mathbb{Z}} v_{1,n-m}^j \omega_{m}^j ,~~\ F^j_{1,n}=\sum\limits_{m\in \mathbb{Z}} v_{2,n-m}^j \omega_{m}^j, \\
\end{aligned}
\end{equation}
and
\begin{equation}\nonumber
v^j_{2,n}=i\hat{n}\psi_n^j,~~v^j_{1,n}=-\frac{d}{dy}\psi_n^j,~~\omega^j_{n}=\left(\frac{d^2}{dy^2}-\hat{n}^2\right)\psi_n^j.
\end{equation}
For convenience, we define the projection operator
\begin{equation}\label{projection}
\Q \Bv=\sum\limits_{n\neq 0}(v_{1,n}\Be_1+v_{2,n}\Be_2)e^{i\hat{n}x}.
\end{equation}

Assume that  $\|\BF\|_{L^2(\Omega)}\le \Phi^\frac{1}{32}$ and set
\begin{equation}\nonumber
\mathcal{J}=\left\{\Bv=\sum\limits_{n\in\Z} \Bv_n
\left|\begin{array}{l}v_{2,0}=0,~~\ \ \left\|\Q\Bv\right\|_{L^2(\Omega)}\le\Phi^{-\frac{7}{12}},\\ \left(\left\|\Bv_0\right\|_{H^2(\Omega)}^2+\left\|\Q\Bv\right\|_{H^\frac53(\Omega)}^2\right)^\frac12\le 2C_4\|\BF\|_{L^2(\Omega)}
\end{array}\right.
\right\},
\end{equation}
where $C_4=\max\{C_1,C_2,C_3\}$ and $C_i(i=1,2,3)$ are positive constants appeared in Propositions \ref{0mode}, \ref{mediumv}, and \ref{highv}, respectively.

{\em Step 2: Mathematical induction.} According to the linear estimates obtained in Propositions \ref{0mode}, \ref{mediumv}, and \ref{highv}, one has $\Bv^0\in \mathcal{J}$ immediately. Assume that $\Bv^j\in \mathcal{J}$, we next prove that $\Bv^{j+1}\in \mathcal{J}$. First, it follows from Propositions \ref{0mode}, \ref{mediumv} and \ref{highv} and the assumption $v^j_{2,0}=0$ that
\begin{equation}\begin{aligned}\label{eq706}
&\left(\left\|\Bv_0^{j+1}\right\|_{H^2(\Omega)}^2+\left\|\Q\Bv^{j+1}\right\|_{H^\frac53(\Omega)}^2\right)^\frac12\\
\le& C_4\left(\left\|F_{1,0}+F_{1,0}^j\right\|_{L^2(\Omega)}^2+\sum\limits_{n\neq 0}\left\|F_{2,n}+F_{2,n}^j\right\|_{L^2(\Omega)}^2+\left\|F_{1,n}+F_{1,n}^j\right\|_{L^2(\Omega)}^2\right)^\frac12\\
\le& \sqrt{2}C_4\left\|\BF\right\|_{L^2(\Omega)}
+C\left(\sum\limits_{n\in \mathbb{Z} }\left\|F^{j}_{1,n}\right\|_{L^2(\Omega)}^2+\sum\limits_{n\neq 0}\left\|F^{j}_{2,n}\right\|_{L^2(\Omega)}^2\right)^\frac12\\
\le&\sqrt{2}C_4\left\|\BF\right\|_{L^2(\Omega)}+C\left[\sum\limits_{n\in \mathbb{Z} } \left\| \sum\limits_{m\neq 0,n}v^j_{2,n-m}\omega^j_{m}\right\|_{L^2(\Omega)}^2 + \sum\limits_{n \neq 0} \left\|\sum\limits_{m\neq 0,n}v^j_{1,n-m}\omega^j_{m}\right\|_{L^2(\Omega)}^2 \right.\\
&\left.+\sum\limits_{n\neq 0}\left(\left\|v^j_{2,n}\omega^j_{0}\right\|_{L^2(\Omega)}^2+\left\|v^j_{1,n}\omega^j_{0}\right\|_{L^2(\Omega)}^2+\left\|v^j_{1,0}\omega^j_{n}\right\|_{L^2(\Omega)}^2\right)\right]^\frac12.
\end{aligned}\end{equation}
Using Hausdorff-Young inequality and the interpolation inequalities yields
\begin{equation}\begin{aligned}\label{eq707}
&\left(\sum\limits_{n\in \mathbb{Z} }\left\| \sum\limits_{m\neq 0,n}v^j_{2,n-m}\omega^j_{m}\right\|_{L^2(\Omega)}^2\right)^\frac12=
\left(\sum\limits_{n\in \mathbb{Z} } \left\| \sum\limits_{m\neq 0,n}(\Q v_2^j)_{n-m}(\Q \omega^j)_m\right\|_{L^2(\Omega)}^2\right)^\frac12\\
 \leq & \left(\sum\limits_{n\in \mathbb{Z} }\left\|(\Q v_2^j\cdot\Q \omega^j)_n\right\|_{L^2(\Omega)}^2\right)^\frac12=\left\|\Q v_2^j\cdot\Q \omega^j\right\|_{L^2(\Omega)} \le\left\|\Q v_2^j\right\|_{L^6(\Omega)}\left\|\Q \omega^j\right\|_{L^3(\Omega)}\\
\le& C\left\|\Q \Bv^j\right\|_{L^2(\Omega)}^\frac{2}{5}\left\|\Q \Bv^j\right\|_{H^\frac53(\Omega)}^\frac{3}{5}\left\|\Q \Bv^{j}\right\|_{H^\frac53(\Omega)}\\
\le& C\Phi^{-\frac{7}{30}}\|\BF\|_{L^2(\Omega)}^\frac85.
\end{aligned}\end{equation}
Similarly, it also holds that
\begin{equation}\label{eq708}
\left(\sum\limits_{n\neq 0}\left\| \sum\limits_{m\neq 0,n}v^j_{1,n-m}\omega^j_{m}\right\|_{L^2(\Omega)}^2\right)^\frac12\leq \left\|\Q v_1^j\right\|_{L^6(\Omega)}\left\|\Q \omega^j\right\|_{L^3(\Omega)}
\le C\Phi^{-\frac{7}{30}}\|\BF\|_{L^2(\Omega)}^\frac85.
\end{equation}
Using H\"{o}lder inequality and Sobolev's embedding inequalities yields
\begin{equation}\begin{aligned}\label{eq709}
&\left(\sum\limits_{n\neq 0}\left\|v^j_{2,n}\omega^j_{0}\right\|_{L^2(\Omega)}^2+\sum\limits_{n\neq 0}\left\|v^j_{1,n}\omega^j_{0}\right\|_{L^2(\Omega)}^2\right)^\frac12\le C\left(\sum\limits_{n\neq 0}\left\|\Bv^{j}_n\right\|_{L^6(\Omega)}^2\left\|\omega^j_{0}\right\|_{L^3(\Omega)}^2\right)^\frac12\\
\le&C\left\|\Bv^{j}_{0}\right\|_{H^2(\Omega)}\|\Q\Bv^j\|_{H^1(\Omega)}\le C\left\|\Bv^{j}_{0}\right\|_{H^2(\Omega)}\left\|\Q\Bv^{j}\right\|_{L^2(\Omega)}^\frac{2}{5}\left\|\Q\Bv^{j}\right\|_{H^\frac53(\Omega)}^\frac{3}{5}\\
\le& C\Phi^{-\frac{7}{30}}\|\BF\|_{L^2(\Omega)}^\frac{8}{5}
\end{aligned}\end{equation}
and
\begin{equation}\begin{aligned}\label{eq710}
&\left(\sum\limits_{n\neq 0}\left\|v^j_{1,0}\omega^j_{n}\right\|_{L^2(\Omega)}^2\right)^\frac12
\le  C\|\Bv_{0}^{j}\|_{L^\infty(\Omega)}\|\Q\Bv^{j}\|_{H^1(\Omega)}\\
\le& C\|\Bv_{0}^{j}\|_{H^2(\Omega)}\|\Q\Bv^{j}\|_{L^2(\Omega)}^\frac25\|\Q\Bv^{j}\|_{H^\frac53(\Omega)}^\frac35\le C\Phi^{-\frac{7}{30}}\|\BF\|_{L^2(\Omega)}^\frac85.
\end{aligned}\end{equation}
Combining the estimates \eqref{eq706}-\eqref{eq710}, together with the assumption $\|\BF\|_{L^2(\Omega)}\le \Phi^\frac{1}{32}$ gives
\begin{equation}\nonumber
\begin{aligned}
&\left(\left\|\Bv_0^{j+1}\right\|_{H^2(\Omega)}^2+\left\|\Q\Bv^{j+1}\right\|_{H^\frac53(\Omega)}^2\right)^\frac12\\
\le& \sqrt{2}C_4\|\BF\|_{L^2(\Omega)}+C\Phi^{-\frac{103}{480}}\|\BF\|_{L^2(\Omega)}.
\end{aligned}\end{equation}
Moreover, the estimates \eqref{eq706}-\eqref{eq710} together with  Propositions \ref{mediumv} and \ref{highv} yield that
\begin{equation}\nonumber
\begin{aligned}
&\left\|\Q\Bv^{j+1}\right\|_{L^2(\Omega)}\le C\Phi^{-\frac23}\left(\sum\limits_{n\neq0}\left\|F_{2,n}+F_{2,n}^j\right\|_{L^2(\Omega)}^2+\left\|F_{1,n}+F_{1,n}^j\right\|_{L^2(\Omega)}^2\right)^\frac12\\
\le& C\Phi^{-\frac{2}{3}}\|\BF\|_{L^2(\Omega)}+C\Phi^{-\frac{423}{480}}\|\BF\|_{L^2(\Omega)}.
\end{aligned}\end{equation}
There exists a positive constant $\Phi_0\ge 1$ such that for all $\Phi\ge \Phi_0$, the solution $\Bv^{j+1}$ satisfies
\begin{equation}\label{eq712}
\begin{aligned}
\left(\left\|\Bv_0^{j+1}\right\|_{H^2(\Omega)}^2+\left\|\Q\Bv^{j+1}\right\|_{H^\frac53(\Omega)}^2\right)^\frac12\le2C_4\|\BF\|_{L^2{(\Omega)}}
\end{aligned}\end{equation}
and
\begin{equation}\nonumber\begin{aligned}
&\left\|\Q\Bv^ {j+1}\right\|_{L^2(\Omega)}\le \Phi^{-\frac{7}{12}}.
\end{aligned}\end{equation}
Hence $\Bv^{j+1}\in \mathcal{J}$. By mathematical induction, 	$\Bv^j \in \mathcal{J}$ for every $j\in \mathbb{N}$ and $\{\Bv^j\}_{j\ge0}$ is a bounded sequence in
\begin{equation}\nonumber
\mathcal{J}_0=\left\{\Bv=\sum\limits_{n\in\Z} \Bv_n
\left|\left(\left\|\Bv_0\right\|_{H^2(\Omega)}^2+\left\|\Q\Bv\right\|_{H^\frac53(\Omega)}^2\right)^\frac12<\infty
\right.\right\}.
\end{equation}
Therefore, there exists a function $\Bv\in \mathcal{J}_0$ such that $\Bv^j\rightharpoonup \Bv$ in $\mathcal{J}_0$ and
\begin{equation}\label{eq715}
\left(\left\|\Bv_0\right\|_{H^2(\Omega)}^2+\left\|\Q\Bv\right\|_{H^\frac53(\Omega)}^2\right)^\frac12\le 2C_4\|\BF\|_{L^2(\Omega)}.
\end{equation}
Since $\psi^{j+1}_n$ is the solution to the problem \eqref{model72}, $\Bv^{j+1}$ satisfies
\begin{equation}\nonumber
\mathrm{curl}(-\Delta \Bv^{j+1}+\BU\cdot \nabla \Bv^{j+1}+\Bv^{j+1}\cdot \nabla \BU+\Bv^{j}\cdot\nabla\Bv^{j}-\BF)=0.
\end{equation}
Taking the limit for $j$ in the above equation yields
\begin{equation}\nonumber
\mathrm{curl}(-\Delta \Bv+\BU\cdot \nabla \Bv+\Bv\cdot \nabla \BU+\Bv\cdot\nabla\Bv-\BF)=0.
\end{equation}
Therefore there exists a function $P$ with $\nabla P\in L^2(\Omega)$ such that
\begin{equation}\label{eq718}
-\Delta \Bv+\BU\cdot \nabla \Bv+\Bv\cdot \nabla \BU+\nabla P=-\Bv\cdot\nabla\Bv+\BF.
\end{equation}

{\em Step 3: $H^2$-regularity. } For any $n\in\Z$, the stream function $\psi_n$ of $\Bv_n$ satisfies the nonlinear problem \eqref{model71}. According to Propositions \ref{mediumv} and \ref{highv}, the solution $\Bv$ satisfies the following estimates
\begin{equation}\nonumber
\begin{aligned}
\left\|\Q\Bv\right\|_{L^2(\Omega)}\leq& C\Phi^{-\frac23}\left\|\Q\BF\right\|_{L^2(\Omega)}
+C\Phi^{-\frac23}\left[\left(\sum\limits_{n\in \mathbb{Z}}\left\| \sum\limits_{m\neq 0,n}v_{2,n-m}\omega_{m}\right\|_{L^2(\Omega)}^2\right)^\frac12\right.\\
&+\left(\sum\limits_{n\neq 0}\left\|\sum\limits_{m\neq 0,n}v_{1,n-k}\omega_{k}\right\|_{L^2(\Omega)}^2\right)^\frac12+\left(\sum\limits_{n\neq 0}\left\|v_{2,n}\omega_{0}\right\|_{L^2(\Omega)}^2\right)^\frac12\\
&\left.+\left(\sum\limits_{n\neq 0}\left\|v_{1,n}\omega_{0}\right\|_{L^2(\Omega)}^2\right)^\frac12
+\left(\sum\limits_{n\neq 0}\left\|v_{1,0}\omega_{n}\right\|_{L^2(\Omega)}^2\right)^\frac12\right].
\end{aligned}\end{equation}
Using Hausdorff-Young inequality and Sobolev's embedding inequality gives
\begin{equation}\nonumber
\begin{aligned}
\left(\sum\limits_{n\in \mathbb{Z} }\left\| \sum\limits_{m\neq 0,n}v_{2,n-m}\omega_{m}\right\|_{L^2(\Omega)}^2\right)^\frac12\le \left\|\Q \Bv\cdot\Q \omega\right\|_{L^2(\Omega)} \le C\left\|\Q\Bv\right\|_{H^\frac53(\Omega)}^2\le C\|\BF\|_{L^2(\Omega)}^2
\end{aligned}\end{equation}
and
\begin{equation}\nonumber
\begin{aligned}
\left(\sum\limits_{n\neq 0}\left\|v_{2,n}\omega_{0}\right\|_{L^2(\Omega)}^2\right)^\frac12
\le&C\left\|\Bv_{0}\right\|_{H^2(\Omega)}\left\|\Q\Bv\right\|_{H^\frac53(\Omega)}\le C\|\BF\|_{L^2(\Omega)}^2.
\end{aligned}\end{equation}
Since the other terms can be estimated similarly, one finally obtains
\begin{equation}\begin{aligned}\label{eq722}
\left\|\Q\Bv\right\|_{L^2(\Omega)}
\le  C\Phi^{-\frac23}\|\BF\|_{L^2(\Omega)}^2\le \Phi^{-\frac{7}{12}},
\end{aligned}\end{equation}
provided that $\Phi\ge \Phi_0$ is sufficiently large. Moreover, it follows from Propositions \ref{mediumv} and \ref{highv} that
\begin{equation}\begin{aligned}\label{eq723}
\left\|\Q\Bv\right\|_{H^2(\Omega)}
\le&C\Phi^\frac14\left\|\Q\BF\right\|_{L^2(\Omega)}+C\Phi^{\frac14}\left[\left(\sum\limits_{n\in \mathbb{Z} }\left\| \sum\limits_{m\neq 0,n}v_{2,n-m}\omega_{m}\right\|_{L^2(\Omega)}^2\right)^\frac12\right.\\
&+\left(\sum\limits_{n\neq 0}\left\|\sum\limits_{m\neq 0,n}v_{1,n-k}\omega_{k}\right\|_{L^2(\Omega)}^2\right)^\frac12+\left(\sum\limits_{n\neq 0}\left\|v_{2,n}\omega_{0}\right\|_{L^2(\Omega)}^2\right)^\frac12\\
&\left.+\left(\sum\limits_{n\neq 0}\left\|v_{1,n}\omega_{0}\right\|_{L^2(\Omega)}^2\right)^\frac12
+\left(\sum\limits_{n\neq 0}\left\|v_{1,0}\omega_{n}\right\|_{L^2(\Omega)}^2\right)^\frac12\right].
\end{aligned}\end{equation}
With the aid of the estimates \eqref{eq715} and \eqref{eq722}, one can estimate the terms on the right hand side of \eqref{eq723} in a similar way to \eqref{eq712}. Hence, it holds that
\begin{equation}\nonumber
\left\|\Q\Bv\right\|_{H^2(\Omega)}
\leq C\Phi^{\frac14}\left\|\BF\right\|_{L^2(\Omega)}+ C\Phi^{\frac{17}{480}}\left\|\BF\right\|_{L^2(\Omega)}.
\end{equation}
This, together with \eqref{eq715}, implies that $\Bv$ is a strong solution of \eqref{eq718} and satisfies that
\begin{equation}\nonumber
\left\|\Bv\right\|_{H^2(\Omega)}\le C\Phi^{\frac14}\left\|\BF\right\|_{L^2(\Omega)}.
\end{equation}

{\em Step 4: Uniqueness.} To prove the uniqueness of the solution $\Bv$, we assume that $\Bv^j\in \mathcal{J}(j=1,2)$ are two solutions of the nonlinear problem \eqref{model11}-\eqref{model11'} satisfying
\begin{equation}\nonumber
\left(\left\|\Bv^{j}_0\right\|_{H^2(\Omega)}^2+\left\|\Q\Bv^{j}\right\|_{H^\frac53(\Omega)}^2\right)^\frac12\le C\left\|\BF\right\|_{L^2(\Omega)}\le C\Phi^\frac{1}{32}, ~~j=1,2.
\end{equation}
Then the solutions $\Bv^j(j=1,2)$ satisfy the estimate \eqref{eq722}. In addition, for each $n$, the difference of the stream functions $\tilde{\psi}_n:=\psi_n^1-\psi_n^2$ satisfies the equation
\begin{equation}\nonumber
\begin{aligned}
&-i\hat{n}U''\tilde{\psi}_n+i\hat{n}U\left(\frac{d^2}{dy^2}-\hat{n}^2\right)\tilde{\psi}_n-\left(\frac{d^2}{dy^2}-\hat{n}^2\right)^2\tilde{\psi}_n\\
=&i\hat{n}(F^1_{2,n}-F^2_{2,n})
-\frac{d}{dy}(F^1_{1,n}-F^2_{1,n}),
\end{aligned}
\end{equation}
where
\[
F_{1,n}^i = \sum\limits_{m\in \mathbb{Z}}v_{2,n-m}^i\omega_{m}^i\quad \text{and}\quad F_{2,n}^i=\sum\limits_{m\in \mathbb{Z}}v_{1,n-m}^1\omega_{m}^1.
\]
First, it follows from Proposition \ref{0mode}, the Sobolev's embedding inequalities and the estimate \eqref{eq722} that
\begin{equation}\begin{aligned}\label{eq724}
&\|\tilde{\Bv}_0\|_{H^2(\Omega)}\le C_2\left\|F^1_{1,0}-F^2_{1,0}\right\|_{L^2(\Omega)}
\le\left\|\sum\limits_{m\neq 0}v_{2,-m}^1\omega_{m}^1-v_{2,-m}^2\omega_{m}^2\right\|_{L^2(\Omega)}\\
\le&C\left\|\sum\limits_{m\neq 0}v_{2,-m}^1\left(\omega_{m}^1-\omega_{m}^2\right)\right\|_{L^2(\Omega)}+C\left\|\sum\limits_{m\neq 0}\left(v_{2,-m}^1-v_{2,-m}^2\right)\omega_{m}^2\right\|_{L^2(\Omega)}\\
\le& C\left\|\left(\Q v_{2}^1\cdot \Q(\omega^1-\omega^2)\right)_0\right\|_{L^2(\Omega)}+C\left\|\left(\Q(v_{2}^1-v_{2}^2)\cdot\Q\omega^2\right)_0\right\|_{L^2(\Omega)}\\
\le&C\left\|\Q\Bv^{1}\right\|_{L^6(\Omega)}\left\|\Q\tilde{\omega}\right\|_{L^3(\Omega)}
+C\left\|\Q\tilde{\Bv}\right\|_{L^\infty(\Omega)}\left\|\Q\Bv^{2}\right\|_{H^1(\Omega)}\\
\le&C\left\|\Q\Bv^{1}\right\|_{L^2(\Omega)}^\frac25\left\|\Q\Bv^{1}\right\|_{H^\frac53(\Omega)}^\frac35
\left\|\Q\tilde{\Bv}\right\|_{H^\frac32(\Omega)}
\\
&+C\left\|\Q\tilde{\Bv}\right\|_{H^\frac32(\Omega)}\left\|\Q\Bv^{2}\right\|_{L^2(\Omega)}^\frac25\left\|\Q\Bv^{2}\right\|_{H^\frac53(\Omega)}^\frac35\\ \le&C\Phi^{-\frac{103}{480}}\left\|\Q\tilde{\Bv}\right\|_{H^\frac32(\Omega)}.
\end{aligned}\end{equation}
In addition, applying Propositions \ref{mediumv} and \ref{highv}, and the interpolation inequalities yields
\begin{equation}\begin{aligned}\label{eq725}
&\|\Q\tilde{\Bv}\|_{H^\frac32(\Omega)}\le C\|\Q\tilde{\Bv}\|_{L^2(\Omega)}^\frac{1}{10}\|\Q\tilde{\Bv}\|_{H^\frac53(\Omega)}^\frac{9}{10}\\
\le& C\Phi^{-\frac{1}{15}}\left(\sum\limits_{n\neq 0}\left\|F^{1}_{2,n}-F^{2}_{2,n}\right\|_{L^2(\Omega)}^2
+\left\|F^{1}_{1,n}-F^{2}_{1,n}\right\|_{L^2(\Omega)}^2\right)^\frac12.
\end{aligned}\end{equation}
It follows from the Hausdorff-Young inequality and Sobolev's embedding inequalities that one has
\begin{equation}\begin{aligned}\label{eq726}
&\left(\sum\limits_{n\neq0}\left\| \sum\limits_{m\neq 0,n}v^1_{2,n-m}\omega^1_{m}-v^2_{2,n-m}\omega^2_{m}\right\|_{L^2(\Omega)}^2\right)^\frac12\\
\le&C\left(\sum\limits_{n\neq 0}\left\|\sum\limits_{m\neq 0,n}v^1_{2,n-m}(\omega^1_m-\omega^2_m)\right\|_{L^2(\Omega)}^2
+\sum\limits_{n\neq 0}\left\|\sum\limits_{m\neq 0,n}(v^1_{2,n-m}-v^2_{2,n-m})\omega^2_m\right\|_{L^2(\Omega)}^2\right)^\frac12\\
\le&C\left\|\Q \Bv^1\cdot\Q\tilde{\omega}\right\|_{L^2(\Omega)}+C\left\|\Q \tilde{\Bv}\cdot\Q\omega^2\right\|_{L^2(\Omega)}\\
\le&C\left\|\Q \Bv^1\right\|_{L^6(\Omega)}\left\|\Q\tilde{\omega}\right\|_{L^3(\Omega)}+C\left\|\Q \tilde{\Bv}\right\|_{L^6(\Omega)}\left\|\Q\omega^2\right\|_{L^3(\Omega)}\\
\le & C\left\|\Q v^1\right\|_{H^\frac53(\Omega)}\left\|\Q\tilde{\Bv}\right\|_{H^\frac32(\Omega)}+C\left\|\Q v^2\right\|_{H^\frac53(\Omega)}\left\|\Q\tilde{\Bv}\right\|_{H^\frac32(\Omega)}\leq C\Phi^\frac{1}{32}\left\|\Q\tilde{\Bv}\right\|_{H^\frac32(\Omega)}
\end{aligned}\end{equation}
and
\begin{equation}\begin{aligned}\label{eq727}
&\left(\sum\limits_{n\neq 0}\left\|\sum\limits_{m\neq 0,n}v^1_{1,n-m}\omega^1_{m}-v^2_{1,n-m}\omega^2_{m}\right\|_{L^2(\Omega)}^2\right)^\frac12\\
\le&C\left\|\Q \Bv^1\cdot\Q\tilde{\omega}\right\|_{L^2(\Omega)}+C\left\|\Q \tilde{\Bv}\cdot\Q\omega^2\right\|_{L^2(\Omega)}\le C\Phi^\frac{1}{32}\left\|\Q\tilde{\Bv}\right\|_{H^\frac32(\Omega)}.
\end{aligned}\end{equation}
Similarly, one has
\begin{equation}\begin{aligned}\label{eq728}
&\left(\sum\limits_{n\neq 0}\left\|v^1_{2,n}\omega^1_{0}-v^2_{2,n}\omega^2_{0}\right\|_{L^2(\Omega)}^2\right)^\frac12\\
\le&C\left(\sum\limits_{n\neq 0}\left\|v^1_{2,n}\tilde{\omega}_{0}\right\|_{L^2(\Omega)}^2\right)^\frac12+
C\left(\sum\limits_{n\neq 0}\left\|\tilde{v}_{2,n}\omega^2_{0}\right\|_{L^2(\Omega)}^2\right)^\frac12\\
\le&C\left\|\tilde{\omega}_{0}\right\|_{L^3(\Omega)}\left(\sum\limits_{n\neq 0}\left\|v^1_{2,n}\right\|_{L^6(\Omega)}^2\right)^\frac12+
C\left\|\omega^2_{0}\right\|_{L^3(\Omega)}\left(\sum\limits_{n\neq 0}\left\|\tilde{v}_{2,n}\right\|_{L^6(\Omega)}^2\right)^\frac12\\
\le&C\left\|\tilde{\Bv}_{0}\right\|_{H^2(\Omega)}\left(\sum\limits_{n\neq 0}\left\|\Bv^{1}_n\right\|_{H^\frac32(\Omega)}^2\right)^\frac12+
C\left\|\Bv^{2}_{0}\right\|_{H^2(\Omega)}\left(\sum\limits_{n\neq 0}\left\|\tilde{\Bv}_n\right\|_{H^\frac32(\Omega)}^2\right)^\frac12\\
\le&C\Phi^\frac{1}{32}\left\|\tilde{\Bv}_0\right\|_{H^2(\Omega)}+C\Phi^\frac{1}{32}\left\|\Q\tilde{\Bv}\right\|_{H^\frac32(\Omega)}.
\end{aligned}\end{equation}
Finally, it holds that
\begin{equation}\begin{aligned}\label{eq729}
\left(\sum\limits_{n\neq 0}\left\|v^1_{1,n}\omega^1_{0}-v^2_{1,n}\omega^2_{0}\right\|_{L^2(\Omega)}^2\right)^\frac12\le&C\Phi^\frac{1}{32}\left\|\tilde{\Bv}_0\right\|_{H^2(\Omega)}+C\Phi^\frac{1}{32}\left\|\Q\tilde{\Bv}\right\|_{H^\frac32(\Omega)}
\end{aligned}\end{equation}
and
\begin{equation}\begin{aligned}\label{eq730}
&\left(\sum\limits_{n\neq 0}\left\|v^1_{1,0}\omega^1_{n}-v^2_{1,0}\omega^2_{n}\right\|_{L^2(\Omega)}^2\right)^\frac12\\
\le&C\left\|v_{1,0}^1\right\|_{L^6(\Omega)}\left(\sum\limits_{n\neq 0}\left\|\tilde{\omega}_n\right\|_{L^3(\Omega)}^2\right)^\frac12+
C\left\|\tilde{v}_{1,0}\right\|_{L^6(\Omega)}\left(\sum\limits_{n\neq 0}\left\|\omega^2_n\right\|_{L^3(\Omega)}^2\right)^\frac12\\
\le&C\Phi^\frac{1}{32}\left\|\Q\tilde{\Bv}\right\|_{H^\frac32(\Omega)}+C\Phi^\frac{1}{32}\left\|\tilde{\Bv}_0\right\|_{H^2(\Omega)}.
\end{aligned}\end{equation}

Combining the estimates \eqref{eq724}-\eqref{eq730} gives
\begin{equation}\nonumber
\begin{aligned}
\|\tilde{\Bv}_0\|_{H^2(\Omega)}+\|\Q\tilde{\Bv}\|_{H^\frac32(\Omega)}\le& C\Phi^{-\frac{17}{480}}\left(\|\tilde{\Bv}_0\|_{H^2(\Omega)}+\|\Q\tilde{\Bv}\|_{H^\frac32(\Omega)}\right).
\end{aligned}\end{equation}
This implies that
\begin{equation}\nonumber
\begin{aligned}
\|\tilde{\Bv}_0\|_{H^2(\Omega)}+\|\Q\tilde{\Bv}\|_{H^\frac32(\Omega)}=0,
\end{aligned}\end{equation}
provided that $\Phi\ge \Phi_0$ is sufficiently large. This finishes the proof of Proposition \ref{largeF}.
\end{proof}
Taking $\Bu=\Bv+\BU$, the proof of Theorem \ref{largeforce} is completed.


\section{Uniqueness of the solutions for the nonlinear problem}\label{sec-unique}
This section devotes to the proof of Theorem \ref{uniqueness}.

Let $\Bu$ be a periodic solution of the Navier-Stokes equation \eqref{NS}-\eqref{BC} and $\Bv=\Bu-\BU$. Hence $\Bv$ satisfies the perturbed problem \eqref{model11}-\eqref{model11'}. In the case (a), if $\Bu$ satisfies
\eqref{est6-0}, then it holds that
\begin{equation}\label{6-2}
\|v_2\|_{H^1(\Omega)}\le \varepsilon_1.
\end{equation}
According to the linear estimates obtained in Propositions \ref{0mode}, \ref{mediumv}, and \ref{highv}, one has
\begin{equation}\label{6-3}
\left\|\Bv_0\right\|_{H^2(\Omega)}+\left\|\Q\Bv\right\|_{H^\frac53(\Omega)}\leq C\left\|(\Bv\cdot\nabla v_1)_0\right\|_{L^2(\Omega)}+C\left\|\Q(\Bv\cdot\nabla \Bv)\right\|_{L^2(\Omega)},
\end{equation}
where $\Q$ is the projection operator defined in \eqref{projection}.
It follows from the divergence free condition $\mathrm{div}\,\Bv=\partial_xv_1+\partial_yv_2=0$ that one can rewrite the nonlinear term $\Bv\cdot\nabla\Bv$ as
\begin{equation}\nonumber
\Bv\cdot\nabla\Bv=(-v_1\partial_yv_2+v_2\partial_yv_1,v_1\partial_xv_2+v_2\partial_yv_2)^T.
\end{equation}
Since $v_{2,0}=0$, using Hausdorff-Young inequality yields
\begin{equation}\label{6-4}
\begin{aligned}
\|(\Bv\cdot\nabla v_1)_0\|_{L^2} \leq &~~ \|(v_1\partial_yv_2)_0\|_{L^2(\Omega)}+\|(v_2\partial_yv_1)_0\|_{L^2(\Omega)}\\
\leq &~~ \left\|\sum_{n \neq0}v_{1,-n}v_{2,n}'\right\|_{L^2(\Omega)}+\left\|\sum_{n \neq0}v_{2,-n}v_{1,n}'\right\|_{L^2(\Omega)}\\
\leq &~~ \|\Q v_1\|_{L^\infty(\Omega)}\|\Q(\partial_yv_2)\|_{L^2(\Omega)}+\|\Q v_2\|_{L^6(\Omega)}\|\Q(\partial_yv_1)\|_{L^3(\Omega)}\\
\leq &~~ C\|v_2\|_{H^1(\Omega)}\|\Q \Bv\|_{H^\frac32(\Omega)}.
\end{aligned}\end{equation}
Moreover, one has
\begin{equation}\label{6-5}\begin{aligned}
\|\Q(\Bv\cdot\nabla\Bv)\|_{L^2(\Omega)}\leq &~~ \|\Q(v_1\partial_yv_2)\|_{L^2(\Omega)}+\|\Q(v_2\partial_yv_1)\|_{L^2(\Omega)}
+\|\Q(v_1\partial_xv_2)\|_{L^2(\Omega)}\\
&~~+\|\Q(v_2\partial_yv_2)\|_{L^2(\Omega)}.
\end{aligned}\end{equation}
Using Hausdorff-Young inequality again yields
\begin{equation}\nonumber
\begin{aligned}
\|\Q(v_1\partial_yv_2)\|_{L^2(\Omega)}\leq&~~\left(\sum_{n\neq0}\left\|\sum_{m\in \Z}v_{1,n-m}v_{2,m}'\right\|_{L^2(\Omega)}^2\right)^\frac12\\
\leq&~~\left(\sum_{n\neq0}\left\|\sum_{m\neq 0,n}v_{1,n-m}v_{2,m}'\right\|_{L^2(\Omega)}^2\right)^\frac12
+\left(\sum_{n\neq0}\left\|v_{1,0}v_{2,n}'\right\|_{L^2(\Omega)}^2\right)^\frac12\\
\leq&~~\|\Q v_{1}\|_{L^\infty(\Omega)}\|\Q(\partial_yv_{2})\|_{L^2(\Omega)}
+\|\Bv_0\|_{L^\infty(\Omega)}\|\Q(\partial_yv_{2})\|_{L^2(\Omega)}\\
\leq&~~C\|v_2\|_{H^1(\Omega)}(\|\Bv_0\|_{H^2(\Omega)}+\|\Q\Bv\|_{H^\frac32(\Omega)}).
\end{aligned}\end{equation}
Since the other terms on the right hand side of \eqref{6-5} can be estimated similarly, one finally obtains
\begin{equation}\label{6-7}
\|\Q(\Bv\cdot\nabla\Bv)\|_{L^2(\Omega)}\leq C\|\Q v_2\|_{H^1(\Omega)}(\|\Bv_0\|_{H^2(\Omega)}+\|\Q\Bv\|_{H^\frac32(\Omega)}).
\end{equation}
Combining \eqref{6-3}-\eqref{6-4} and \eqref{6-7} gives
\begin{equation}\nonumber
\left\|\Bv_0\right\|_{H^2(\Omega)}+\left\|\Q\Bv\right\|_{H^\frac53(\Omega)}\\
\leq C\varepsilon_1(\left\|\Bv_0\right\|_{H^2(\Omega)}+\left\|\Q\Bv\right\|_{H^\frac53(\Omega)}).
\end{equation}
Hence
\begin{equation}\nonumber
\left\|\Bv_0\right\|_{H^2(\Omega)}+\left\|\Q\Bv\right\|_{H^\frac53(\Omega)}=0,
\end{equation}
provided that $\varepsilon_1$ is sufficiently small.

In the case (b), it also follows from the interpolation inequality and the linear estimates obtained in Propositions \ref{0mode}, \ref{mediumv}, and \ref{highv}  that one has
\begin{equation}\begin{aligned}\label{6-10}
&~~\left\|\Bv_0\right\|_{H^2(\Omega)}+\Phi^\frac{1}{30}\left\|\Q\Bv\right\|_{H^\frac32(\Omega)}\\
\leq&~~\left\|\Bv_0\right\|_{H^2(\Omega)}+C\Phi^\frac{1}{30}\left\|\Q\Bv\right\|_{L^2(\Omega)}^\frac{1}{10}\left\|\Q\Bv\right\|_{H^\frac53(\Omega)}^\frac{9}{10}\\
\leq&~~C\left\|(\Bv\cdot\nabla v_1)_0\right\|_{L^2(\Omega)}+C\Phi^{-\frac{1}{30}}\left\|\Q(\Bv\cdot\nabla \Bv)\right\|_{L^2(\Omega)}.
\end{aligned}\end{equation}
The estimate \eqref{6-10}, together with  \eqref{6-4},\eqref{6-7} and the assumption $\|v_2\|_{H^1(\Omega)}=\|u_2\|_{H^1(\Omega)}\leq \Phi^\frac{1}{60}$, yields
\begin{equation}\nonumber
\begin{aligned}
&~~\left\|\Bv_0\right\|_{H^2(\Omega)}+\Phi^\frac{1}{30}\left\|\Q\Bv\right\|_{H^\frac32(\Omega)}\\
\leq&~~C\Phi^\frac{1}{60}\left\|\Q\Bv\right\|_{H^\frac32(\Omega)}+C\Phi^{-\frac{1}{60}}(\left\|\Bv_0\right\|_{H^2(\Omega)}+\left\|\Q\Bv\right\|_{H^\frac32(\Omega)}).
\end{aligned}\end{equation}
This implies that
\begin{equation}\nonumber
\left\|\Bv_0\right\|_{H^2(\Omega)}+\Phi^\frac{1}{30}\left\|\Q\Bv\right\|_{H^\frac32(\Omega)}=0,
\end{equation}
provided $\Phi$ is sufficiently large. Hence the proof of Theorem \ref{uniqueness} is completed.



\appendix
\section{Some elementary lemmas}
In this appendix, we collect some elementary lemmas which play important roles in the paper and might be useful elsewhere.
We first give some Poincar\'e type inequalities.
\begin{lemma}\label{lemmaA1}
For a  function $g\in C^2([-1,1])$ satisfying $g(-1)=g(1)=0$,
it holds that
\be \label{A1-1}
\int_{-1}^1 |g |^2  \, dy \leq  \int_{-1}^1 |g'|^2 \, dy.
\ee
Moreover, one has
\be\label{A1-2}
\inte |g'|^2 \, dy \leq \left( \inte | g''|^2\, dy \right)^{\frac12} \left( \inte |g|^2 \, dy \right)^{\frac12}
\ee
and consequently,
\be\label{A1-3}
\inte |g'|^2 \, dy
\leq \inte | g''|^2  \, dy.
\ee
\end{lemma}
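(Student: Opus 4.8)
The plan is to prove the three inequalities in the stated order, using each to bootstrap the next, all by elementary one-dimensional arguments.

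First I would establish \eqref{A1-1}, which is the one-dimensional Poincar\'e (Wirtinger) inequality on $(-1,1)$ subject to the Dirichlet condition $g(\pm1)=0$. The sharp constant is governed by the first eigenvalue $\lambda_1$ of $-d^2/dy^2$ on $(-1,1)$ with homogeneous Dirichlet boundary conditions, namely $\lambda_1=\pi^2/4$ with eigenfunction $\cos(\pi y/2)$; expanding $g$ in the associated orthonormal basis gives $\int_{-1}^1|g'|^2\,dy\ge\lambda_1\int_{-1}^1|g|^2\,dy$. Since $\pi^2/4>1$, this yields \eqref{A1-1} with constant $1$, indeed with room to spare. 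The inequality should be stated for complex-valued $g$, since that is how it is applied in Lemmas \ref{basic-est1}--\ref{basic-est2}; the eigenfunction-expansion argument is insensitive to this.

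Next I would prove \eqref{A1-2} by a single integration by parts. Writing $\int_{-1}^1|g'|^2\,dy=\int_{-1}^1 g'\overline{g'}\,dy$ and integrating by parts, the boundary term $[g'\overline{g}]_{-1}^1$ vanishes by virtue of $g(\pm1)=0$, so that $\int_{-1}^1|g'|^2\,dy=-\int_{-1}^1 g''\overline{g}\,dy$. Applying the Cauchy--Schwarz inequality to the right-hand side gives exactly \eqref{A1-2}.

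Finally, \eqref{A1-3} follows by combining the first two estimates algebraically. Setting $a=\int_{-1}^1|g'|^2\,dy$, $b=\int_{-1}^1|g''|^2\,dy$, and $c=\int_{-1}^1|g|^2\,dy$, inequality \eqref{A1-2} reads $a\le b^{1/2}c^{1/2}$ and \eqref{A1-1} reads $c\le a$; substituting the latter into the former gives $a\le b^{1/2}a^{1/2}$, whence $a\le b$, which is \eqref{A1-3} (the case $a=0$ being trivial, so one may assume $a>0$ when dividing). Since these are all classical estimates, I do not expect any genuine obstacle; the only point requiring care is securing the constant $1$ rather than a larger constant in \eqref{A1-1}. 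A naive fundamental-theorem-of-calculus bound $g(y)=\int_{-1}^y g'(s)\,ds$ together with Cauchy--Schwarz produces the constant $2$, and it is precisely the use of both endpoint conditions---equivalently, the fact that $\lambda_1=\pi^2/4$ exceeds $1$---that delivers the sharp form, which in turn is exactly what makes the clean bootstrap $a\le b^{1/2}c^{1/2}\le b^{1/2}a^{1/2}$ to \eqref{A1-3} go through.
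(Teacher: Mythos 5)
Your proposal is correct and follows essentially the same route as the paper: the paper likewise obtains \eqref{A1-2} by integrating by parts (using $g(\pm1)=0$ to kill the boundary term) followed by Cauchy--Schwarz, and then derives \eqref{A1-3} by combining \eqref{A1-1} and \eqref{A1-2} exactly as in your bootstrap. The only difference is cosmetic: for \eqref{A1-1} the paper simply cites Poincar\'e's inequality, whereas you spell out the eigenvalue justification $\lambda_1=\pi^2/4>1$, which is a valid (and sharper) way to secure the constant $1$.
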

\begin{proof}
The inequality \eqref{A1-1} is owing to Poincar\'e's inequality. Integration by parts and using the homogeneous boundary conditions for $g$ give
\be \nonumber
\inte | g'|^2  \, dy= -\inte g''\overline{g } \, dy
\leq \left( \inte | g''|^2\, dy \right)^{\frac12}
\left( \inte |g |^2\, dy   \right)^{\frac12}.
\ee
The estimate \eqref{A1-3} is the direct consequence of \eqref{A1-1} and \eqref{A1-2}. Hence, the proof of the lemma is completed.
\end{proof}

In the following lemma, we give the pointwise estimate for the functions evaluated on the boundary.
\begin{lemma}\label{lemmaA2}
For a function $g\in C^4([-1,1])$ with $g(\pm1)=0$, one has
\begin{equation}\nonumber
|g' (\pm1) |  \leq C\left(   \inte| g' |^2 \, dy \right)^{\frac14}  \left( \inte \left|g'' \right|^2  \, dy       \right)^{\frac14}
\end{equation}
and
\begin{equation}\label{A2-2}
\inte| g^{(3)} |^2 \, dy \leq C\left(   \inte| g^{(4)}|^2 \, dy \right)^{\frac12}  \left( \inte \left|g'' \right|^2  \, dy       \right)^{\frac12}+C\inte \left|g'' \right|^2  \, dy  .
\end{equation}
\end{lemma}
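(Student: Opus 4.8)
The plan is to prove the two inequalities separately, reducing both to one-dimensional trace interpolation together with the Poincar\'e-type bounds already recorded in Lemma~\ref{lemmaA1}; the only boundary information used is $g(\pm1)=0$.

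For the pointwise estimate I would apply to $h=g'$ the elementary observation that, since $[-1,1]$ has length $2$, the mean value theorem for integrals gives a point $y_0$ with $|h(y_0)|^2=\tfrac12\int_{-1}^1|h|^2\,dy$. Then for the endpoints
\[
|h(\pm1)|^2=|h(y_0)|^2+\int_{y_0}^{\pm1}\frac{d}{ds}|h(s)|^2\,ds\le \frac12\|h\|_{L^2}^2+2\|h\|_{L^2}\|h'\|_{L^2},
\]
by Cauchy--Schwarz. With $h=g'$ this reads $|g'(\pm1)|^2\le C(\|g'\|_{L^2}^2+\|g'\|_{L^2}\|g''\|_{L^2})$, and since \eqref{A1-3} gives $\|g'\|_{L^2}\le\|g''\|_{L^2}$, hence $\|g'\|_{L^2}^2\le\|g'\|_{L^2}\|g''\|_{L^2}$, the right-hand side collapses to $C\|g'\|_{L^2}\|g''\|_{L^2}$. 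Taking square roots yields the first claim.

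For \eqref{A2-2} the starting point is the integration by parts
\[
\int_{-1}^1|g^{(3)}|^2\,dy=\Big[g''\,\overline{g^{(3)}}\Big]_{-1}^1-\int_{-1}^1 g''\,\overline{g^{(4)}}\,dy,
\]
where the last integral is bounded by $\|g''\|_{L^2}\|g^{(4)}\|_{L^2}$, producing the first term of \eqref{A2-2}. The step I expect to be the main obstacle is the boundary term $[g''\overline{g^{(3)}}]_{-1}^1$, since no homogeneous boundary data is available for $g''$ or $g^{(3)}$. Here I would use the \emph{unconstrained} version of the trace bound above, namely $|w(\pm1)|^2\le\tfrac12\|w\|_{L^2}^2+2\|w\|_{L^2}\|w'\|_{L^2}$ (the same mean-value argument, needing no boundary condition), applied to $w=g''$ and to $w=g^{(3)}$.

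Writing $A=\|g''\|_{L^2}$, $B=\|g^{(3)}\|_{L^2}$, $D=\|g^{(4)}\|_{L^2}$, these give $|g''(\pm1)|\le C(A+A^{1/2}B^{1/2})$ and $|g^{(3)}(\pm1)|\le C(B+B^{1/2}D^{1/2})$, so the boundary term is controlled by $C(AB+A^{1/2}B^{3/2}+A^{1/2}BD^{1/2}+AB^{1/2}D^{1/2})$. Each mixed term can be dominated by $\varepsilon B^2+C(A^2+AD)$ via Young's inequality: for example $AB\le\varepsilon B^2+CA^2$, $A^{1/2}B^{3/2}\le\varepsilon B^2+CA^2$, $A^{1/2}BD^{1/2}\le\varepsilon B^2+CAD$, and $AB^{1/2}D^{1/2}=(AB)^{1/2}(AD)^{1/2}\le\tfrac12(AB+AD)$. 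Choosing $\varepsilon$ small enough to absorb $\varepsilon B^2$ into the left-hand side $B^2=\int_{-1}^1|g^{(3)}|^2\,dy$ then leaves $B^2\le C(A^2+AD)$, which is exactly \eqref{A2-2}. The only care needed is to keep the Young splittings consistent so that every $B^2$ that appears is genuinely absorbable.
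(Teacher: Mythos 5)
Your proof is correct and follows essentially the same route as the paper: an averaging trace bound (your mean-value point plays the role of the paper's integration of the identity over $y\in[-1,1]$) combined with Lemma \ref{lemmaA1} for the endpoint estimate, and then integration by parts with unconstrained trace bounds on $g''(\pm1)$, $g^{(3)}(\pm1)$ absorbed via Young's inequality for \eqref{A2-2}. The Young splittings you list are exactly the ones needed, so there is no gap.
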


\begin{proof}
For every $y\in [-1, 1] $, one has
\be \label{A2-3}\ba
|g'(1)|^2=& | g'(y)|^2  + \int_y^1 g''\overline{g'} \, ds
+ \int_y^1 g'\overline{g''} \, ds\\
\le&| g'(y)|^2+2\left(\inte|g''|^2\,dy\right)^\frac12\left(\inte|g'|^2\,dy\right)^\frac12.
\ea
\ee
Integrating \eqref{A2-3} over $[-1,1]$ yields
\be \label{A2-4} \begin{aligned}
2|g'(1)|^2\leq& \inte| g'(y)|^2\,dy+4\left(\inte|g''|^2\,dy\right)^\frac12\left(\inte|g'|^2\,dy\right)^\frac12\\
\leq& 5\left(\inte|g''|^2\,dy\right)^\frac12\left(\inte|g'|^2\,dy\right)^\frac12.
\end{aligned}\ee
Similarly,
\be \nonumber
2|g'(-1)|^2\le 5\left(\inte|g''|^2\,dy\right)^\frac12\left(\inte|g'|^2\,dy\right)^\frac12.
\ee
Similar to \eqref{A2-4}, one has
\be \label{A2-6}\ba
2|g''(\pm1)|^2
\le&\inte | g''(y)|^2\,dy+4\left(\inte|g''|^2\,dy\right)^\frac12\left(\inte|g^{(3)}|^2\,dy\right)^\frac12
\ea
\ee
and
\be \label{A2-7}\ba
2|g^{(3)}(\pm1)|^2
\le&\inte | g^{(3)}(y)|^2\,dy+4\left(\inte|g^{(3)}|^2\,dy\right)^\frac12\left(\inte|g^{(4)}|^2\,dy\right)^\frac12.
\ea
\ee
Applying integration by parts, \eqref{A2-6} and \eqref{A2-7} yields
\begin{equation}\nonumber
\begin{aligned}
&\inte |g^{(3)}|^2\,dy=-\inte g''\overline{g^{(4)}}\,dy+g''(1)\overline{g^{(3)}(1)}-g''(-1)\overline{g^{(3)}(-1)}\\
\leq & \left(\inte|g''|^2\,dy\right)^\frac12\left(\inte|g^{(4)}|^2\,dy\right)^\frac12+C\left(\inte|g^{(3)}|^2\,dy\right)^\frac12\left(\inte|g''|^2\,dy\right)^\frac12\\
&+C\left(\inte|g^{(3)}|^2\,dy\right)^\frac34\left(\inte|g''|^2\,dy\right)^\frac14\\
&+C\left(\inte|g''|^2\,dy\right)^\frac12\left(\inte|g^{(3)}|^2\,dy\right)^\frac14\left(\inte|g^{(4)}|^2\,dy\right)^\frac14\\
&+C\left(\inte|g''|^2\,dy\right)^\frac14\left(\inte|g^{(3)}|^2\,dy\right)^\frac12\left(\inte|g^{(4)}|^2\,dy\right)^\frac14.
\end{aligned}
\end{equation}
Using Young's inequality, one proves \eqref{A2-2}. Hence, we finish the proof of the lemma.
\end{proof}

The following lemma is a variant of Hardy-Littlewood-P\'olya type inequality \cite[pp.165]{HLP}.
\begin{lemma}\label{lemmaHLP}
Let $g\in C^1([-1,1])$, one has
\begin{equation}\nonumber
\inte |g(y)|^2\,dy\le \frac13\inte |g'(y)|^2(1-y^2)\,dy +92\inte |g(y)|^2(1-y^2)\,dy.
\end{equation}

\end{lemma}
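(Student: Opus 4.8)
The plan is to exploit the fact that the only obstruction to a naive weighted Poincar\'e estimate is the degeneracy of the weight $1-y^2$ at the endpoints $y=\pm1$: away from the endpoints the weight is bounded below, so there $\int |g|^2(1-y^2)\,dy$ already controls $\int|g|^2\,dy$. I would therefore isolate the two boundary layers by an integration by parts \emph{anchored at the degenerate endpoints}, choosing the multiplier to vanish exactly where the weight does, so that no uncontrollable boundary value at $y=\pm1$ is produced. Concretely, I would split $\int_{-1}^1|g|^2\,dy=\int_0^1+\int_{-1}^0$, integrate by parts on $[0,1]$ against the primitive $y-1$ (which vanishes at the degenerate endpoint $y=1$) and on $[-1,0]$ against $y+1$. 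Using $(|g|^2)'=2\Re(\bar g g')$, this gives the identity
\[
\int_0^1|g|^2\,dy=|g(0)|^2+2\int_0^1(1-y)\Re(\bar g g')\,dy
\]
together with its mirror image on $[-1,0]$.

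The key elementary observations are $1-y\le 1-y^2$ and $\frac{1-y}{1+y}\le 1-y^2$ on $[0,1]$ (and the reflected inequalities on $[-1,0]$), which allow the whole layer to be absorbed into the weighted norms. By Young's inequality with a parameter $\delta>0$ one obtains $2\int_0^1(1-y)|g||g'|\,dy\le \delta\int_0^1(1-y^2)|g'|^2\,dy+\frac1\delta\int_0^1(1-y^2)|g|^2\,dy$, and summing the two halves yields
\[
\int_{-1}^1|g|^2\,dy\le 2|g(0)|^2+\delta\int_{-1}^1(1-y^2)|g'|^2\,dy+\frac1\delta\int_{-1}^1(1-y^2)|g|^2\,dy .
\]
It then remains only to dispose of the interior value $|g(0)|^2$. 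Since $y=0$ lies in the non-degenerate core, I would write $g(0)=g(z)-\int_0^z g'\,ds$ for $z\in[-r,r]$, square, and average in $z$ over a thin strip $[-r,r]$; using $1-y^2\ge 1-r^2$ there bounds $|g(0)|^2$ by $\frac{1}{r(1-r^2)}\int_{-1}^1(1-y^2)|g|^2\,dy$ plus a gradient term carrying the \emph{small} factor $\frac{4r}{1-r^2}\int_{-1}^1(1-y^2)|g'|^2\,dy$. Inserting this and choosing, e.g., $r=\tfrac{1}{30}$ and $\delta\approx\tfrac1{15}$ makes the total coefficient of $\int_{-1}^1(1-y^2)|g'|^2\,dy$ equal to exactly $\tfrac13$, while keeping the coefficient of $\int_{-1}^1(1-y^2)|g|^2\,dy$ below $92$; this is the asserted inequality (the constant $92$ is not sharp, and any small $r$ with moderate $\delta$ works).

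The main obstacle, and the reason the statement carries two specific constants rather than one Poincar\'e constant, is precisely the endpoint degeneracy: one cannot bound $\int_{\mathrm{layer}}|g|^2$ by $\int_{\mathrm{layer}}|g|^2(1-y^2)$ alone, and a plain Cauchy--Schwarz of $g(y)-g(z)=\int g'$ against the constant weight fails because $\int\frac{dy}{1-y^2}$ diverges at $y=\pm1$. The delicate point is therefore to trade the layer contribution for the \emph{correctly weighted} Dirichlet integral $\int(1-y^2)|g'|^2\,dy$ with a coefficient as small as $\tfrac13$: this is what forces the anchoring at the degenerate endpoints (so their boundary terms vanish) and forces handling the leftover $|g(0)|^2$ by averaging over a thin central strip, so as not to reintroduce a large multiple of the Dirichlet integral. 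The generous size of the constant $92$ is exactly the slack that makes this thin-strip trade-off affordable.
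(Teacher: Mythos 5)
Your proof is correct, and it takes a genuinely different route from the paper's. The paper splits $[-1,1]$ at $\pm\frac12$, rescales each boundary layer (e.g.\ $[\frac12,1]$) to $[0,1]$ via $h(t)=g\bigl(\frac{1+t}{2}\bigr)$, and invokes an external Hardy--Littlewood--P\'olya-type inequality (Lemma A.3 of the cited paper [WX1]), namely $\int_0^1|h(t)-h(0)|^2\,dt\le \frac12\int_0^1|h'(t)|^2(1-t^2)\,dt$, to absorb the layer into the weighted Dirichlet integral; the interior value $|g(\tfrac12)|^2$ is then controlled by averaging over $[0,\tfrac12]$, exactly in the spirit of your treatment of $|g(0)|^2$. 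You replace the external lemma by the anchored integration by parts
\begin{equation}\nonumber
\int_0^1|g|^2\,dy=|g(0)|^2+2\int_0^1(1-y)\,\Re\bigl(\bar g\, g'\bigr)\,dy,
\end{equation}
followed by a weighted Young inequality; note that the single observation $1-y\le 1-y^2$ on $[0,1]$ already suffices here, since $2(1-y)|g||g'|\le\delta(1-y)|g'|^2+\delta^{-1}(1-y)|g|^2$, so your second inequality $\frac{1-y}{1+y}\le 1-y^2$ is not actually needed. What your route buys: it is fully self-contained and elementary (no citation), and it makes transparent that the pair $(\frac13,92)$ can be replaced by $(\epsilon,C_\epsilon)$ for any $\epsilon>0$, since the Dirichlet coefficient $\delta+Cr/(1-r^2)$ can be made arbitrarily small; the paper's argument is shorter given the quoted lemma, but its coefficient $\frac13$ is pinned to that lemma's constant. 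One bookkeeping remark: the averaging step actually yields $|g(0)|^2\le \frac{1}{r(1-r^2)}\int_{-1}^1(1-y^2)|g|^2\,dy+\frac{2r}{1-r^2}\int_{-1}^1(1-y^2)|g'|^2\,dy$, i.e.\ your factor $4r$ is unnecessarily generous, and taken literally it would make your sample choice $r=\frac1{30}$, $\delta\approx\frac1{15}$ land a hair above $\frac13$; with the factor $2r$, those same choices give a Dirichlet coefficient of about $0.2<\frac13$ and a zeroth-order coefficient of about $75<92$, so the stated inequality follows comfortably. This is a triviality of constant-tuning, which you explicitly flag, and not a gap.
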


\begin{proof}
Let
\be \nonumber
h(t)=g\left(\frac{1+t}{2}\right) \text{ for } t\in [0,1].
\ee
It follows from \cite[Lemma A.3]{WX1} that one has

\be \nonumber
\int_0^1 |h(t)-h(0)|^2\,dt\leq \frac12 \int_0^1 |h'(t)|^2(1-t^2)\,dt.
\ee
This implies
\be\nonumber
\int_\frac12^1 \left|g(y)-g\left(\frac12\right)\right|^2\,dy\leq \frac12 \int_\frac12^1|g'(y)|^2(y-y^2)\,dy
\leq \frac14\int_\frac12^1|g'(y)|^2(1-y^2)\,dy.
\ee
Hence,
\be\label{A3-6}\begin{aligned}
\int_\frac12^1 |g(y)|^2\,dy\leq& \frac43\int_\frac12^1 \left|g(y)-g\left(\frac12\right)\right|^2\,dy+4\int_\frac12^1 \left|g\left(\frac12\right)\right|^2\,dy\\
\leq &\frac13\int_\frac12^1|g'(y)|^2(1-y^2)\,dy+2\left|g\left(\frac12\right)\right|^2.
\end{aligned}\ee
For any $y\in [0,\frac12]$, one has
\be\nonumber
\left|g\left(\frac12\right)\right|^2=\int_y^\frac12g'(s)\overline {g(s)}+g(s)\overline{g'(s)}\,ds+|g(y)|^2.
\ee
Integrating equation above over $[0,\frac12]$, one has
\be\label{A3-7}
\ba
\left|g\left(\frac12\right)\right|^2\leq& 4\int_0^\frac12 |g'(s)||g(s)|\,ds+2\int_0^\frac12|g(y)|^2\,dy\\
\leq&\frac18\int_0^\frac12|g'(y)|^2\,dy +34\int_0^\frac12|g(y)|^2\,dy\\
\leq&\frac16\int_0^\frac12 |g'(y)|^2(1-y^2)\,dy +\frac{136}{3}\int_0^\frac12|g(y)|^2(1-y^2)\,dy.
\ea\ee

One the other hand, it's easy to obtain
\be\label{A3-8}
\int_0^\frac12 |g(y)|^2\,dy\le\frac43\int_0^\frac12|g(y)|^2(1-y^2)\,dy.
\ee
Combining the estimates \eqref{A3-6}-\eqref{A3-8} gives
\be\nonumber
\int_0^1  |g(y)|^2\,dy\le \frac13\int_0^1 |g'(y)|^2(1-y^2)\,dy +92\int_0^1|g(y)|^2(1-y^2)\,dy.
\ee
The estimate over $[-1,0]$ is similar. Hence, the proof of this lemma is completed.

\end{proof}

The following lemma is about a weighted interpolation inequality, which are quite similar to \cite[inequality (3.28)]{GHM}.
\begin{lemma}\label{weightinequality} Let $g \in C^1([-1, 1])$, then one has
\begin{equation}\nonumber
\int_{-1}^1 |g|^2 \, dy  \leq C \left(\int_{-1}^1 (1-y^2)|g|^2  \, dy\right)^{\frac23} \left(\int_{-1}^1 |g'|^2 \, dy\right)^{\frac13}+C\int_{-1}^1 (1-y^2)|g|^2  \, dy.
\end{equation}
\end{lemma}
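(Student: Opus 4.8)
The plan is to prove the inequality by localizing in $y$: away from the endpoints the weight $1-y^2$ is bounded below, so $\inte|g|^2\,dy$ is controlled directly by the weighted integral, whereas in two thin boundary layers near $y=\pm1$ one must transfer mass inward using $g'$, since the weight degenerates there. I would introduce a free parameter $\delta\in(0,\tfrac12]$, to be optimized at the very end, and abbreviate $A:=\inte(1-y^2)|g|^2\,dy$ and $B:=\inte|g'|^2\,dy$. Note that no boundary condition on $g$ is needed, consistent with the hypothesis $g\in C^1([-1,1])$.

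First I would dispose of the interior region $\{|y|\le 1-\delta\}$, where $1-y^2\ge 2\delta-\delta^2\ge\delta$, so that $\int_{|y|\le 1-\delta}|g|^2\,dy\le\delta^{-1}A$. The heart of the argument is the boundary layer, say $[1-\delta,1]$. For $y\in[1-\delta,1]$ and a reference point $z\in[1-2\delta,1-\delta]$, Cauchy--Schwarz gives $|g(y)|^2\le 2|g(z)|^2+2(y-z)\int_z^y|g'|^2\,ds$. Averaging in $z$ over the interval of length $\delta$, then integrating in $y$ over $[1-\delta,1]$ and using $y-z\le 2\delta$, yields $\int_{1-\delta}^1|g|^2\,dy\le 2\int_{1-2\delta}^{1-\delta}|g|^2\,dy+C\delta^2 B$. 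Since $1-z^2\ge\delta$ on $[1-2\delta,1-\delta]$, the first term is at most $2\delta^{-1}A$. The layer near $y=-1$ is handled identically by symmetry.

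Collecting the three pieces gives the scale-by-scale estimate $\inte|g|^2\,dy\le C\delta^{-1}A+C\delta^2 B$, valid for every $\delta\in(0,\tfrac12]$ with $C$ an absolute constant. It then remains to optimize in $\delta$. Balancing $\delta^{-1}A$ against $\delta^2 B$ suggests $\delta\sim(A/B)^{1/3}$: when this candidate lies in $(0,\tfrac12]$ one recovers the homogeneous product bound $C A^{2/3}B^{1/3}$, and when it would exceed $\tfrac12$ (that is, when $B$ is small relative to $A$) one instead fixes $\delta=\tfrac12$, which bounds both terms by $CA$. Taking whichever outcome is larger produces exactly $C A^{2/3}B^{1/3}+CA$, i.e. the claimed inequality.

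The only genuinely delicate point is the boundary-layer transfer: one must extract precisely the power $\delta^2$ on the gradient term so that, after optimization, the exponents $\tfrac23$ and $\tfrac13$ emerge correctly. The case distinction in the optimization (equivalently, the restriction $\delta\le\tfrac12$ forced by the geometry of the layers) is exactly what generates the additive term $C\inte(1-y^2)|g|^2\,dy$; without the restriction one would obtain only the scale-invariant product bound, which fails when $g'\equiv0$ but $g\not\equiv0$.
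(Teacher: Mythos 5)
Your proof is correct. Both arguments share the same skeleton --- a free parameter $\delta$, a splitting of $[-1,1]$ into an interior region where $1-y^2\gtrsim\delta$ and two boundary layers of width $\delta$, and a final balancing that produces the exponents $\tfrac23$ and $\tfrac13$ --- but the treatment of the boundary layers is genuinely different. The paper bounds $\int_{1-\delta\le|y|\le1}|g|^2\,dy$ crudely by $2\delta\sup_{[-1,1]}|g|^2$, chooses $\delta=\bigl(\int_{-1}^1(1-y^2)|g|^2\,dy\bigr)^{1/2}\big/\bigl(\sup|g|+\bigl(\int_{-1}^1(1-y^2)|g|^2\,dy\bigr)^{1/2}\bigr)$ explicitly, and then must control the sup norm by an Agmon-type inequality $\sup|g|^2\le\tfrac12\int_{-1}^1|g|^2\,dy+2\|g\|_{L^2}\|g'\|_{L^2}$, finishing with Young's inequality to absorb the resulting $\|g\|_{L^2}$ factors back into the left-hand side. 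You instead transfer the layer mass to the adjacent annulus $[1-2\delta,1-\delta]$ (where the weight is nondegenerate) by comparing $g(y)$ with an averaged reference value $g(z)$, which yields the clean scale-by-scale bound $\int_{-1}^1|g|^2\,dy\le C\delta^{-1}A+C\delta^2B$ with $A=\int_{-1}^1(1-y^2)|g|^2\,dy$, $B=\int_{-1}^1|g'|^2\,dy$, and then optimize in $\delta\in(0,\tfrac12]$ with a case distinction. Your route avoids the sup norm and the final absorption step entirely, makes the origin of the exponents and of the additive term $CA$ completely transparent (it is forced by the cap $\delta\le\tfrac12$, exactly as your closing remark about $g'\equiv0$, $g\not\equiv0$ shows it must be), and is arguably the more robust template; the paper's version is comparably elementary but hides the balancing inside the explicit choice of $\delta$ and the Young absorption.
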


\begin{proof}
Let $\delta \in(0,1)$ be a parameter to be determined. Then one has
\be\nonumber
\ba
\inte|g|^2\,dy\leq&~~2\int_{|y|\leq \frac12}(1-y^2)|g|^2\,dy+\int_{\frac12\le |y|\le 1-\delta}\frac{1-y^2}{\delta}|g|^2\,dy+\int_{1-\delta\le|y|\le 1}|g|^2\,dy\\
\leq&~~2\inte(1-y^2)|g|^2\,dy+\frac{2}{\delta}\inte(1-y^2)|g|^2\,dy+2\delta\sup_{y\in[-1,1]}|g(y)|^2.
\ea\ee
Taking
\[\delta=\frac{\left(\inte(1-y^2)|g|^2\,dy\right)^\frac12}{\sup_{y\in[-1,1]}|g(y)|+\left(\inte(1-y^2)|g|^2\,dy\right)^\frac12}\]
yields
\be\label{A4-3}
\inte|g|^2\,dy\leq 4\inte(1-y^2)|g|^2\,dy+4\left(\inte(1-y^2)|g|^2\,dy\right)^\frac12\sup_{y\in[-1,1]}|g(y)|.
\ee
Note that for any $y,t\in[-1,1]$, one has
\be\label{A4-4}
|g(y)|^2=|g(t)|^2+\int_{t}^y\frac{d}{ds}|g(s)|^2\,ds\leq |g(t)|^2+ 2\left(\inte|g|^2\,ds\right)^\frac12\left(\inte|g'|^2\,ds\right)^\frac12.
\ee
Integrating the  equation \eqref{A4-4} with respect to $t$ over $[-1,1]$ yields
\be\label{A4-5}
|g(y)|^2\leq \frac12\inte|g|^2\,dy+2\left(\inte|g|^2\,dy\right)^\frac12\left(\inte|g'|^2\,dy\right)^\frac12.
\ee
Then it follows from \eqref{A4-3} and \eqref{A4-5} that
\be\nonumber
\ba
\inte|g|^2\,dy\leq&~~C\inte(1-y^2)|g|^2\,dy+C\left(\inte(1-y^2)|g|^2\,dy\right)^\frac12\left(\inte|g|^2\,dy\right)^\frac12\\
&~~+C\left(\inte(1-y^2)|g|^2\,dy\right)^\frac12\left(\inte|g|^2\,dy\right)^\frac14\left(\inte|g'|^2\,dy\right)^\frac14.
\ea\ee
An application of Young's inequality implies that
\be\nonumber
\inte|g|^2\,dy\leq C\left(\inte(1-y^2)|g|^2\,dy\right)^\frac23\left(\inte|g'|^2\,ds\right)^\frac13+C\inte(1-y^2)|g|^2\,dy.
\ee
This finishes the proof of the lemma.
\end{proof}

The following lemma is inspired by \cite[Lemma 9.3 ]{CWZ}. Here we give a different elementary proof.
\begin{lemma}\label{interpolation}
Assume that $g\in C^1([-1,1])$ and $g(\pm1)=0$. It holds that
\begin{equation}\label{A5-1}
\inte |g|^2\,dy \leq C\left(\inte |(1-y^2) g|^2\,dy\right)^\frac12\left(\inte |g'|^2\,dy\right)^\frac12.
\end{equation}
\end{lemma}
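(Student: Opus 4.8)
The plan is to split $[-1,1]$ into an interior region, where the weight $(1-y^2)$ is bounded below, and two thin boundary layers of width $\delta$ near $y=\pm1$, where the homogeneous condition $g(\pm1)=0$ can be exploited. Throughout I abbreviate $A:=\inte|(1-y^2)g|^2\,dy$ and $B:=\inte|g'|^2\,dy$, so that the target inequality \eqref{A5-1} reads $\inte|g|^2\,dy\le C\,A^{\frac12}B^{\frac12}$, and I may assume $A,B>0$ since otherwise $g'\equiv0$ or $(1-y^2)g\equiv0$ forces $g\equiv0$ by continuity and the boundary condition, making the inequality trivial.

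First I would treat the interior. For $\delta\in(0,1]$ and $|y|\le 1-\delta$ one has $1-y^2=(1-|y|)(1+|y|)\ge\delta$, hence the pointwise bound $|g|^2\le\delta^{-2}(1-y^2)^2|g|^2$; integrating over this set gives $\int_{|y|\le 1-\delta}|g|^2\,dy\le\delta^{-2}A$. Next, for the layer near $y=1$ I would use $g(1)=0$ together with the fundamental theorem of calculus and the Cauchy--Schwarz inequality: for $y\in[1-\delta,1]$, $|g(y)|^2=\left|\int_y^1 g'(s)\,ds\right|^2\le(1-y)\int_{1-\delta}^1|g'|^2\,ds\le(1-y)B$, so that $\int_{1-\delta}^1|g|^2\,dy\le\tfrac12\delta^2 B$. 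The layer near $y=-1$ is identical, using $g(-1)=0$ and the weight $1+y$. Adding the three contributions yields the basic estimate $\inte|g|^2\,dy\le\delta^{-2}A+\delta^2 B$.

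Finally I would optimize over $\delta$. The unconstrained minimizer of $\delta^{-2}A+\delta^2B$ is $\delta^2=(A/B)^{\frac12}$, which produces exactly $2A^{\frac12}B^{\frac12}$. To respect the constraint $\delta\in(0,1]$ I take $\delta=\min\{1,(A/B)^{\frac14}\}$: when $A\le B$ this gives the optimal choice and the stated bound with $C=2$, while when $A>B$ the interior region is empty ($\delta=1$) and one simply has $\inte|g|^2\,dy\le B\le A^{\frac12}B^{\frac12}$, which is even stronger.

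The analytic content is concentrated entirely in the elementary layer estimate, which is exactly where the hypothesis $g(\pm1)=0$ is used; the only real bookkeeping issue—the \emph{main obstacle}, though a mild one—is to keep $\delta$ in the admissible range $(0,1]$ so that the interior set $\{|y|\le 1-\delta\}$ genuinely lies in $(-1,1)$, which is handled cleanly by the truncation $\delta=\min\{1,(A/B)^{\frac14}\}$ and the separate treatment of the case $A>B$.
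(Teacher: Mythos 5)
Your proof is correct and essentially the same as the paper's: both decompose $[-1,1]$ into an interior piece where $1-y^2\geq\delta$ and boundary layers of width $\delta$ where the condition $g(\pm1)=0$ together with Cauchy--Schwarz gives a bound of order $\delta^2\inte|g'|^2\,dy$, and both then optimize with $\delta=\bigl(\inte|(1-y^2)g|^2\,dy\big/\inte|g'|^2\,dy\bigr)^{1/4}$. The only immaterial difference is the degenerate case where this $\delta$ would exceed $1$: you take $\delta=1$ so that the layer estimate alone yields the bound, whereas the paper disposes of that case first via the Poincar\'e inequality of Lemma \ref{lemmaA1}.
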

\begin{proof}
If
\begin{equation}\nonumber
\inte |g'|^2\,dy\leq \inte |(1-y^2) g|^2\,dy,
\end{equation}
 the inequality \eqref{A5-1} follows from Lemma \ref{lemmaA1}.

Otherwise, let $\delta\in(0,1)$ be a constant to be determined. Thus one has
\begin{equation}\label{A5-2}
\int_0^1 |g|^2\,dy\leq\frac{1}{\delta^2}\int_{0}^{1-\delta}|(1-y^2)g|^2\,dy+\delta\sup_{1-\delta<y\leq1}|g(y)|^2.
\end{equation}
Since $g(1)=0$, it follows that
\begin{equation}\label{A5-3}
\sup_{1-\delta<y\leq1}|g(y)|\leq \int_{1-\delta}^1|g'(y)|\,dy\leq \delta^\frac12\left(\int_{1-\delta}^1|g'(y)|^2\,dy\right)^\frac12.
\end{equation}
Combining \eqref{A5-2} and \eqref{A5-3} gives
\begin{equation}\label{A5-4}
\int_0^1 |g|^2\,dy\leq\frac{1}{\delta^2}\int_{0}^1|(1-y^2)g|^2\,dy+\delta^2\int_0^1|g'(y)|^2\,dy.
\end{equation}
Similarly, one can also obtain
\begin{equation}\label{A5-5}
\int_{-1}^0 |g|^2\,dy\leq\frac{1}{\delta^2}\int_{-1}^0|(1-y^2)g|^2\,dy+\delta^2\int_{-1}^0|g'(y)|^2\,dy.
\end{equation}
Summing \eqref{A5-4} and \eqref{A5-5} and taking
\begin{equation}\nonumber
\delta=\left(\frac{\inte|(1-y^2)g|^2\,dy}{\inte|g'(y)|^2\,dy}\right)^\frac14\in (0,1),
\end{equation}
 one has
\begin{equation}\nonumber
\begin{aligned}
\inte |g|^2\,dy\leq&\frac{1}{\delta^2}\inte|(1-y^2)g|^2\,dy+\delta^2\inte|g'(y)|^2\,dy \\
\leq& C\left(\inte |(1-y^2) g|^2\,dy\right)^\frac12\left(\inte |g'|^2\,dy\right)^\frac12.
\end{aligned}\end{equation}
This finishes the proof of Lemma \ref{interpolation}.
\end{proof}

The following lemma on the Airy function gives the estimate for the boundary layers constructed in Section \ref{secinter}.
\begin{lemma}\label{airy-est}
(1) Let $C_{0, n,\Phi}$  be given in \eqref{5-3-62} and $G_{n,\Phi}$ be defined in \eqref{5-3-61}.  It holds that
\begin{equation}\nonumber
\tilde{C}_{0}:=\inf \left\{\left|C_{0, n, \Phi}\right|:\Phi\ge  1,1\le |n| \leq L_0 \Phi^{\frac{1}{2}}\right\}>0
\end{equation}
and
\begin{equation}\nonumber
\sup _{\Phi \geq 1} \sup _{\frac1L \leq|\hat{n}| \leq \Phi^{\frac12}}  \sup _{\rho \ge R} e^{\rho}\left|\frac{d^k G_{n, \Phi}}{d \rho^k}(\rho)\right|<\infty, ~~k=0,1,2,3,
\end{equation}
provided that $R$ is sufficiently large.

(2) There exists a constant $\epsilon\in (0,1)$ such that defining
\begin{equation}\nonumber
\Sigma_{\epsilon}:=\left\{\mu \in \mathbb{C} | \text { arg } \mu=-\frac{\pi}{6}, 0 \leq|\mu| \leq \epsilon\right\},
\end{equation}
then
\begin{equation}\nonumber
K_{\epsilon}:=\inf _{\mu \in \Sigma_{\epsilon}}\left|\int_\ell e^{-\mu z} Ai\left(z+\mu^{2}\right) \,dz\right|\ge\frac16,
\end{equation}
where $\ell$ is the contour $\ell=\left\{re^{i\frac{\pi}{6}}|r\ge 0\right\}$.
\end{lemma}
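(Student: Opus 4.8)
The plan is to handle the two parts separately, reducing everything to two facts about the Airy function: its super-exponential decay in the sector $|\arg z|<\pi/3$, and the value $\int_0^\infty Ai(t)\,dt=\tfrac13$.

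First I would record that, over the whole admissible parameter set, the two quantities entering the construction of $G_{n,\Phi}$ are \emph{uniformly bounded}. Writing $\Lambda:=|\hat n|/\beta$ for the rate in the kernel of \eqref{5-3-61} and $b:=\frac{2\beta|\hat n|}{3\Phi}$ for the modulus of the shift in $\widetilde{G}_{n,\Phi}$, the definition $\beta=\left|\tfrac{3\Phi\hat n}{2}\right|^{1/3}$ gives $\Lambda=\left(\tfrac{2|\hat n|^2}{3\Phi}\right)^{1/3}$ and $b=\tfrac23\left(\tfrac32\right)^{1/3}|\hat n|^{4/3}\Phi^{-2/3}$. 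Since $|\hat n|\le\Phi^{1/2}$ on this range, both are bounded, namely $\Lambda\in\big(0,(\tfrac23)^{1/3}\big]$ and $b\in[0,b_0]$ with $b_0=\tfrac23(\tfrac32)^{1/3}$. Moreover the argument of $Ai$ in $\widetilde{G}_{n,\Phi}(\sigma)=Ai\big(C_\pm(\sigma\mp ib)\big)$ tends to $\pm\tfrac\pi6$ as $\sigma\to\infty$, which lies strictly inside the decay sector; hence the classical asymptotics $Ai(z)\sim\tfrac1{2\sqrt\pi}z^{-1/4}e^{-\frac23 z^{3/2}}$ yield, uniformly in the bounded shift $b$, a super-exponential bound $|\widetilde{G}_{n,\Phi}^{(j)}(\sigma)|\le Ce^{-c\sigma^{3/2}}$ for $\sigma\ge R$ and $j=0,1,2,3$, with $R,c,C$ independent of $n,\Phi$.

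For Part $(1)$, the bound $\tilde C_0>0$ reduces to a uniform upper bound on $|G_{n,\Phi}(0)|$, since \eqref{5-3-62} gives $|C_{0,n,\Phi}|=\min\{1,1/|G_{n,\Phi}(0)|\}$. Switching the order of integration in \eqref{5-3-61} at $\rho=0$ produces the closed form $G_{n,\Phi}(0)=\int_0^\infty\widetilde{G}_{n,\Phi}(\sigma)\,\tfrac{\sinh(\Lambda\sigma)}{\Lambda}\,d\sigma$; using $\tfrac{\sinh(\Lambda\sigma)}{\Lambda}\le\sigma e^{\Lambda\sigma}$ with $\Lambda$ bounded and $\widetilde{G}_{n,\Phi}$ super-exponentially small, the integral converges to a constant independent of $n,\Phi$, so $\tilde C_0>0$. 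For the weighted decay I would use that $G_{n,\Phi}$ is the decaying solution of $G_{n,\Phi}''-\Lambda^2 G_{n,\Phi}=\widetilde{G}_{n,\Phi}$, written as $G_{n,\Phi}(\rho)=e^{-\Lambda\rho}\int_\rho^\infty e^{\Lambda\tau}H(\tau)\,d\tau$ with $H(\tau)=\int_\tau^\infty e^{-\Lambda(\sigma-\tau)}\widetilde{G}_{n,\Phi}(\sigma)\,d\sigma$; the super-exponential decay of $\widetilde{G}_{n,\Phi}$ propagates first to $H$ and then to $G_{n,\Phi}$, giving $|G_{n,\Phi}(\rho)|\le Ce^{-\frac c2\rho^{3/2}}$ for $\rho\ge R$. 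Differentiating through the ODE ($G''=\Lambda^2G+\widetilde G$, $G'''=\Lambda^2G'+\widetilde G'$) transfers the same decay to derivatives up to order three, and choosing $R$ so large that $\tfrac c2\rho^{3/2}\ge\rho$ on $[R,\infty)$ makes $e^{\rho}\big|\tfrac{d^k}{d\rho^k}G_{n,\Phi}\big|$ uniformly bounded.

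For Part $(2)$, the key observation is that $\arg\mu=-\tfrac\pi6$ is chosen exactly so that $\mu z$ is real and nonnegative on $\ell$: with $z=re^{i\pi/6}$ one gets $\mu z=|\mu|r$, so $e^{-\mu z}=e^{-|\mu|r}$ is a genuine damping factor with $|e^{-\mu z}|\le1$. At $\mu=0$ the integral is $\int_\ell Ai(z)\,dz$, and since $Ai$ is entire and decays in $|\arg z|<\tfrac\pi3$, I would rotate the ray $\{\arg z=\tfrac\pi6\}$ down to the positive real axis (the arc at infinity contributing nothing) to obtain $\int_\ell Ai(z)\,dz=\int_0^\infty Ai(t)\,dt=\tfrac13$. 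For general $\mu\in\Sigma_\epsilon$ I would estimate the deviation through the splitting
\[
\int_\ell e^{-\mu z}Ai(z+\mu^2)\,dz-\int_\ell Ai(z)\,dz
=\int_\ell(e^{-\mu z}-1)Ai(z+\mu^2)\,dz+\int_\ell\big(Ai(z+\mu^2)-Ai(z)\big)\,dz,
\]
bounding the first term by $C|\mu|$ via $|e^{-\mu z}-1|=|e^{-|\mu|r}-1|\le|\mu|r$ and the super-exponential decay of $Ai$ on $\ell$, and the second by $C|\mu|^2$ via $Ai(z+\mu^2)-Ai(z)=\mu^2\int_0^1 Ai'(z+t\mu^2)\,dt$ and the integrability of $Ai'$ along $\ell$. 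Thus the integral stays within $C\epsilon$ of $\tfrac13$, and choosing $\epsilon$ with $C\epsilon\le\tfrac16$ yields $K_\epsilon\ge\tfrac13-\tfrac16=\tfrac16$.

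I expect the main obstacle to be the uniformity in Part $(1)$: one must verify that every $n,\Phi$-dependence collapses into the two bounded quantities $\Lambda$ and $b$, and that the Airy asymptotics—hence $c,C,R$—are uniform as $\sigma\to\infty$ with the bounded complex shift $ib$, which calls for a quantitative, sector-uniform form of the Airy expansion rather than its pointwise statement. In Part $(2)$ the only non-routine step is the contour rotation fixing the value $\tfrac13$; once that is pinned down, the perturbation estimate is a soft consequence of the damping $|e^{-\mu z}|\le1$ and the decay of $Ai$ and $Ai'$.
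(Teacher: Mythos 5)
Your proposal is correct, but it is worth noting that the paper itself contains no argument at all for this lemma: it simply states that the proof is identical to that of \cite[Lemma 3.7]{GHM} and omits it. Your write-up therefore supplies what the paper outsources, and it does so along the lines one would expect that reference to follow: everything is reduced to (i) the uniform Airy asymptotics $Ai(z)\sim \tfrac{1}{2\sqrt{\pi}}z^{-1/4}e^{-\frac23 z^{3/2}}$ in closed subsectors of $|\arg z|<\pi/3$, and (ii) the classical value $\int_0^\infty Ai(t)\,dt=\tfrac13$. Your key reductions are sound and I verified them: the parameters $\Lambda=|\hat n|/\beta=(2|\hat n|^2/(3\Phi))^{1/3}$ and $b=\tfrac23(\tfrac32)^{1/3}|\hat n|^{4/3}\Phi^{-2/3}$ are indeed uniformly bounded on the range $|\hat n|\le\Phi^{1/2}$ (which is the reading of the parameter set that the application in Proposition \ref{mediumstream} requires, the set in the first display of the lemma being stated in terms of $n$ rather than $\hat n$); the Fubini identity $G_{n,\Phi}(0)=\int_0^\infty\widetilde G_{n,\Phi}(\sigma)\,\sinh(\Lambda\sigma)/\Lambda\,d\sigma$ is correct and, with $\sinh(\Lambda\sigma)/\Lambda\le\sigma e^{\Lambda\sigma}$, cleanly avoids any spurious $\Lambda^{-1}$ loss as $\Lambda\to0$, giving $\sup|G_{n,\Phi}(0)|<\infty$ and hence $\tilde C_0=\min\{1,1/\sup|G_{n,\Phi}(0)|\}>0$ by \eqref{5-3-62}; the representation from \eqref{5-3-61} does satisfy $G''-\Lambda^2G=\widetilde G$ (via $G'=-\Lambda G-H$, $H'=\Lambda H-\widetilde G$, a one-line computation you leave implicit and should record, since the decay of $G'$ comes from this formula rather than from the ODE itself); and in Part (2) the choice $\arg\mu=-\pi/6$ makes $e^{-\mu z}=e^{-|\mu|r}$ on $\ell$, so the contour rotation to $\int_0^\infty Ai=\tfrac13$ plus your $O(|\mu|)+O(|\mu|^2)$ perturbation bounds give $K_\epsilon\ge\tfrac13-C\epsilon\ge\tfrac16$ for $\epsilon$ small. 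What the paper's citation buys is brevity; what your argument buys is a self-contained proof that makes visible exactly where the hypotheses enter — the constraint $|\hat n|\lesssim\Phi^{1/2}$ is used only to bound $\Lambda$ and $b$, and the angle $-\pi/6$ only to make the exponential a genuine damping factor — at the cost of having to invoke a quantitative, sector-uniform version of the Airy expansion (e.g. Olver's) rather than its pointwise form, a point you correctly flag yourself.
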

Since the proof of Lemma \ref{airy-est} is exactly the same as that of \cite[Lemma 3.7]{GHM}, we omit the proof here.

The following elementary fixed point lemma is the basic tool to prove the nonlinear structural stability. The proof is given in \cite{WX1}.
\bl \label{lemmaA6} Let $Y$ be a Banach space with the norm $\|\cdot\|_{Y}$, and $\mathcal{B}:\  Y\times Y\rightarrow Y$ be a bilinear map. If for all $\Bz_1, \Bz_2 \in Y$, one has
\be \nonumber
\|\mathcal{B}(\Bz_1, \Bz_2) \|_{Y} \leq \eta \|\Bz_1\|_{Y}  \|\Bz_2\|_{Y},
\ee
then for all $\Bz^* \in Y$ satisfying $4 \eta \| \Bz^* \|_{Y} < 1$, the equation
\be \nonumber
\Bz = \Bz^*  + \mathcal{B}(\Bz, \Bz)
\ee
has a unique solution $\Bz \in Y$ satisfying
\be \nonumber
\|\Bz\|_{Y} \leq 2\|\Bz^*\|_{Y}.
\ee
\el


{\bf Acknowledgement.}
The research of Wang was partially supported by NSFC grant 11671289. The research of  Xie was partially supported by  NSFC grants 11971307 and 11631008,  and Young Changjiang Scholar of Ministry of Education in China.

\end{document}